\theoremstyle{definition}
\theoremstyle{plain}
\newtheorem{theorem}{Theorem}[section]
\newtheorem{proposition}[theorem]{Proposition}
\newtheorem{lemma}[theorem]{Lemma}
\newtheorem{corollary}[theorem]{Corollary}
\newtheorem{observation}[theorem]{Observation}
\newtheorem{paragItalic}[theorem]{}
\theoremstyle{definition}
\newtheorem{definition}[theorem]{Definition}
\newtheorem{example}[theorem]{Example}
\newtheorem{examples}[theorem]{Examples}
\newtheorem{notation}[theorem]{Notation}
\newtheorem{remark}[theorem]{Remark}
\theoremstyle{remark}
\newtheorem*{smallremark}{Remark}
\newenvironment{enumerata}%
{\begin{enumerate}

}{\end{enumerate}}
\newcommand{\Aut}{	\operatorname{{\rm Aut}}}
\newcommand{\Spec}{	\operatorname{{\rm Spec}}}
\newcommand{\supp}{	\operatorname{{\rm supp}}}
\newcommand{\image}{	\operatorname{{\rm im}}}
\newcommand{\bideg}{	\operatorname{{\rm bideg}}}
\newcommand{\subdeg}{	\operatorname{{\rm subdeg}}}
\newcommand{\DEG}{	\operatorname{\text{\sc deg}}}
\newcommand{\UDEG}{	\operatorname{\text{\underline{\sc deg}}}}
\newcommand{\Frac}{	\operatorname{{\rm Frac}}}
\renewcommand{\div}{	\operatorname{{\rm div}}}
\newcommand{\Miss}{	\operatorname{{\rm Miss}}}
\newcommand{\Cont}{	\operatorname{{\rm Cont}}}
\newcommand{\dic}{	\operatorname{{\rm dic}}}
\newcommand{\Bir}{	\operatorname{{\rm Bir}}}
\newcommand{\Rec}{	\operatorname{{\rm Rec}}}
\newcommand{\Sw}{S_{\text{\rm w}}}
\newcommand{\Galg}{	\operatorname{\Gamma_{\!\text{\rm alg}}}}
\newcommand{\setspec}[2]{\big\{\,#1\, \mid \,#2\, \big\}}
\newcommand{\Integ}{\ensuremath{\mathbb{Z}}}
\newcommand{\Nat}{\ensuremath{\mathbb{N}}}
\newcommand{\Reals}{\ensuremath{\mathbb{R}}}
\newcommand{\aff}{\ensuremath{\mathbb{A}}}
\newcommand{\proj}{\ensuremath{\mathbb{P}}}
\newcommand{\bk}{{\ensuremath{\rm \bf k}}}
\newcommand{\kk}[1]{\bk^{[#1]}}
\newcommand{\bbV}{\ensuremath{\mathbb{V}}}
\newcommand{\Cgoth}{{\ensuremath{\mathfrak{C}}}}
\newcommand{\pgoth}{{\ensuremath{\mathfrak{p}}}}
\newcommand{\mgoth}{{\ensuremath{\mathfrak{m}}}}
\newcommand{\Aeul}{\EuScript{A}}
\newcommand{\Deul}{\EuScript{D}}
\newcommand{\Meul}{\EuScript{M}}
\newcommand{\Oeul}{\EuScript{O}}
\newcommand{\Peul}{\EuScript{P}}
\renewcommand{\epsilon}{\varepsilon}
\renewcommand{\phi}{\varphi}
\renewcommand{\emptyset}{\varnothing}
\newcommand{\BirA}{\Bir(\aff^2)}
\newcommand{\AutA}{\Aut(\aff^2)}
\newcommand{\rien}[1]{}
\begin{document}
\renewcommand{\baselinestretch}{1.07}


\title[Field generators and birational endomorphisms of $\aff^2$]%
{Field generators in two variables\\ and birational endomorphisms of $\aff^2$}

\author{Pierrette Cassou-Nogu\`es}
\author{Daniel Daigle}

\dedicatory{In memory of Shreeram S.\ Abhyankar.}

\address{IMB, Universit\'e Bordeaux 1 \\
351 Cours de la lib\'eration, 33405, Talence Cedex, France}
\email{Pierrette.Cassou-nogues@math.u-bordeaux1.fr}

\address{Department of Mathematics and Statistics\\
University of Ottawa\\
Ottawa, Canada\ \ K1N 6N5}
\email{ddaigle@uottawa.ca}

\thanks{Research of the first author partially supported by Spanish grants MTM2010-21740-C02-01 and  MTM2010-21740-C02-02.}
\thanks{Research of the second author supported by grant RGPIN/104976-2010 from NSERC Canada.}

{\renewcommand{\thefootnote}{}
\footnotetext{2010 \textit{Mathematics Subject Classification.}
Primary: 14R10, 14H50.}}

{\renewcommand{\thefootnote}{}
\footnotetext{ \textit{Key words and phrases:}
Affine plane, birational morphism, plane curve, rational polynomial, field generator, dicritical.}}

\begin{abstract}
This article is a survey of two subjects:
the first part is devoted to field generators in two variables,
and the second to birational endomorphisms of the affine plane.
Each one of these subjects originated in Abhyankar's seminar in Purdue University in the 1970s.
Note that the part on field generators is more than a survey, since it contains a considerable amount of new material.
\end{abstract}

\maketitle

This article is a survey of two subjects:
the first part of the paper is devoted to field generators in two variables,
and the second to birational endomorphisms of the affine plane.
Each one of these subjects originated in Abhyankar's seminar in Purdue University in the 1970s.
The authors of the present article were introduced to these questions by Peter Russell,
who participated in Abhyankar's seminar and who made early contributions to both problems.

As explained in Section~\ref{SEC:Introduction784y18372yh}, the two subjects are entangled one into the other.
It is therefore natural to present them together in a survey.
Note that Part~I is more than a survey, since it contains a considerable amount of new material
(see Section~\ref{SEC:Introduction784y18372yh}); and that Part~II is less than a survey, since it
restricts itself to certain particular aspects of the subject under consideration 
(see Section~\ref{IntroPartII78364174}).

\section*{Conventions}

The symbol ``$\subset$'' means strict inclusion of sets, ``$\setminus$'' means set difference, and $0 \in \Nat$.

\medskip
If $R$ is a subring of a ring $S$, the notation $S=R^{[n]}$ means that $S$ is $R$-isomorphic
to a polynomial algebra in $n$ variables over $R$.
If $L/K$ is a field extension, $L = K^{(n)}$ means that $L$ is a purely transcendental extension of $K$,
of transcendence degree $n$.
We write $\Frac R$ for the field of fractions of a domain $R$.

\medskip
If $\bk$ is a field and $A = \kk2$ (i.e., $A$ is a polynomial ring in two variables over $\bk$) then
a {\it coordinate system\/} of $A$ is an ordered pair $(X,Y) \in A \times A$ satisfying $A=\bk[X,Y]$.
We define $\Cgoth(A)$ to be the set of coordinate systems of $A$.
A {\it variable\/} of $A$ is an element $X \in A$ satisfying $A = \bk[X,Y]$ for some $Y$.

Consider $F \in A = \kk2$.  For each $\gamma = (X,Y) \in \Cgoth(A)$, we write $\deg_\gamma(F)$ for the total
degree of $F$ as a polynomial in $X,Y$ (let us agree that $\deg_\gamma(0) = -\infty$).
We set
$$
\deg_A(F) = \min \setspec{ \deg_\gamma(F) }{ \gamma \in \Cgoth(A) } .
$$

\medskip
Given a field $\bk$, we write $\aff^2 = \aff^2_\bk$ for the affine plane over $\bk$,
i.e., $\aff^2 = \Spec A$ for some $A = \kk2$.
By a {\it coordinate system\/} of $\aff^2$, we mean a coordinate system of $A$.
That is, a coordinate system of $\aff^2$ is an  $(X,Y) \in A \times A$ satisfying $A=\bk[X,Y]$.
The set of coordinate systems of $\aff^2$ is denoted $\Cgoth(\aff^2)$ or simply $\Cgoth$;
so $\Cgoth = \Cgoth(\aff^2) = \Cgoth(A)$.

\medskip
By ``curve'' we mean ``irreducible and reduced curve''.

\bigskip
\medskip
\noindent
\begin{minipage}{\textwidth}
\begin{center}
{\large\bf Part I: Field generators}
\end{center}

\bigskip
Throughout Part~I, the following convention about the base field $\bk$ is in effect:
\begin{itemize}
\item In all examples, $\bk$ is tacitly assumed to be an algebraically closed field of characteristic zero.
\item Everywhere else, $\bk$ denotes an arbitrary field unless the contrary is explicitly specified.
\end{itemize}
\end{minipage}

\section{Introduction}
\label {SEC:Introduction784y18372yh}

\begin{definition} 
Let $A = \kk2$ and $K = \Frac A$.
A {\it field generator of $A$} is an $F \in A$ satisfying $K = \bk(F,G)$ for some $G \in K$.
A {\it good field generator of $A$} is an $F \in A$ satisfying $K = \bk(F,G)$ for some $G \in A$.
A field generator which is not good is said to be {\it bad}.
\end{definition}

The notions of good and bad field generators are classical. 
The two fundamental articles on this subject were written by Russell in 1975 and 1977
(cf.\  \cite{Rus:fg} and \cite{Rus:fg2}).
The main results of those two papers will be explained in the course of the present article.
Bad field generators were once supposed not to exist, then two examples were given,
the first one by Jan \cite{JanThesis} (unpublished) in 1974, of degree 25,
and the second one by Russell \cite{Rus:fg2} in 1977 of degree 21.
There were no more examples until 2005, when the first author showed \cite{Cassou-BadFG}
that for any $N$ there exists a bad field generator $F$ of $A=\kk2$ such that $\deg_A(F) \ge N$.

\medskip
It is an open question to classify field generators.

\medskip
Throughout Part~I we shall use the convention that the notation ``$A \preceq B$'' means
that all of the following conditions are satisfied:
$$
A = \kk2, \quad B = \kk2, \quad A \subseteq B \quad \text{and} \quad \Frac A = \Frac B .
$$

Observe that if $F \in A \preceq B$, then
$F$ is a field generator of $A$ iff it is a field generator of $B$.
So the problem of classifying field generators is intertwined with that of describing all pairs $A \preceq B$,
or equivalently, with the problem of classifying birational endomorphisms of $\aff^2$. 
The latter problem is the subject of Part~II of the present paper,
and is a hard and interesting problem in its own right.  It therefore seems reasonable to keep those two problems 
separated, i.e., if our aim is to classify field generators, then we should primarily try to
classify {\it those field generators that are not composed with a birational endomorphism.}
A polynomial $F \in A=\kk2$ that is not composed with a birational endomorphism is said to be ``lean'':

\begin{definition} \label {823gdvcuuyv2trc6374rnmujfj}
Let $F \in A=\bk^{[2]}$.  We say that {\it $F$ is lean in $A$} if, for each $A'$ such that $F\in A' \preceq A$, there
holds $A'=A$.  We say that $F \in A$ {\it admits a lean factorization\/} if
there exists $A'\preceq A$ such that $F\in A'$ and $F$ is lean in $A'$. 
\end{definition}

The problem of classifying field generators contains the following subproblems:
\begin{enumerate}

\item[(i)] {\it Determine which field generators do not admit a lean factorization, and classify them.}

\item[(ii)] {\it Classify the field generators that are lean.}

\end{enumerate}
By composing the polynomials (ii) with all birational endomorphisms of $\aff^2$ one obtains
precisely all field generators {\it that admit a lean factorization}; then adding the polynomials (i)
to this set gives all field generators.
We regard (i) and (ii) as the most interesting components of the problem of classifying field generators.
There is, however, another aspect that is of crucial importance:
\begin{enumerate}
\item[(iii)] {\it Describe how field generators behave under birational extensions $A \preceq B$.}
\end{enumerate}
In some sense, (iii) is a theme that underlies the whole paper. 
Results \ref{difja;skdfj;aksd},  \ref{56fewf8r8t34kd223lwgfuio},  \ref{of82098u12bw9},
\ref{m93919gdqwd8719} and \ref{23h7232sqwadzcpydgb} are good illustrations of the type of theory
that (iii) calls for.  

Before discussing (i) and (ii), we need to define the notions of very good and very bad field generators.
We already noted that if $F \in A \preceq B$, then $F$ is a field generator of $A$ iff it is a field 
generator of $B$.
Moreover, if $F$ is a good field generator of $A$ then it is a good field generator of $B$
(and consequently, if it is a bad field generator of $B$ then it is a bad field generator of $A$).
However, it might happen that $F$ be a bad field generator of $A$ and  a good field generator of $B$.
These remarks suggest the following:

\begin{definition}   \label {Eir128372et12uhwjd}
Let $F \in A = \kk2$ be a field generator of $A$.
\begin{enumerate}

\item $F$ is a {\it very good\/} field generator of $A$ if it is a good field generator
of each $A'$ satisfying $F \in A' \preceq A$.

\item $F$ is a {\it very bad\/} field generator of $A$ if it is a bad field generator
of each $A'$ satisfying $A' \succeq A$.

\end{enumerate}
\end{definition}

Problem~(i) is partially solved by \ref{888dfgdzb5n3d2ff},
which asserts that {\it the field generators that  do not admit lean factorizations
are precisely the very good field generators}.
This is in fact the reason why we became interested in the concept of very good field generator.
The very good field generators are not yet classified, but
Sections \ref{SEC:VGVBFGs} and \ref{Leanbadfieldgenerators} give several results about them
(\ref{q23328r83yd74r9128}, \ref{last} and various examples and remarks).

Problem~(ii) is probably the hardest part of the whole question.
Although Section~\ref{Leanbadfieldgenerators} gives some results on this subject,
our understanding is still very incomplete.

\medskip
Most of the results given in Sections~\ref{SEC:PrelimsPartII}--\ref{SEC:VGVBFGs}
can be found in the article \cite{CassouDaigVGVBFG}.
However, most of the examples never appeared in the literature before.
All the material of Section~\ref{Leanbadfieldgenerators} is new.
Note in particular that \ref{p92839819ueg87gdywfe6} gives an example of a very bad field generator
that is also lean, and that no such example was known before.

\begin{remark}  \label {723te762gd1827}
If $\bk$ is an algebraically closed field of characteristic zero then
$F$ is a field generator of $A=\kk2$ if and only if it is a ``rational polynomial'' of $A$.
(By a {\it rational polynomial of $A$}, we mean an element $F \in A \setminus\bk$ such
that, for all but possibly finitely many $\lambda \in \bk$, $F-\lambda$ is irreducible
and the plane curve ``$F-\lambda=0$'' is rational.)
For this equivalence and analoguous results in positive characteristic, see \cite{Dai:GenRatPols}.
\end{remark}

\section{Dicriticals}
\label {SEC:PrelimsPartII}

\begin{definition}  \label {difp23qwjksd}
Given a field extension $L \subseteq M$, let $\bbV(M/L)$ be the set of valuation rings $R$
satisfying $L \subseteq R \subseteq M$, $\Frac R = M$ and $R \neq M$.

Given a pair $(F,A)$ such that $A = \kk2$ and $F \in A \setminus \bk$, define
$$
\bbV^\infty(F,A)
= \setspec{ R \in \bbV( K /\bk(F) ) }{  A \nsubseteq R } \quad \text{where $K = \Frac A$.}
$$
Then $\bbV^\infty(F,A)$ is a nonempty finite set which depends only on the pair $(\bk(F), A)$.
For each $R \in \bbV^\infty(F,A)$, let $\mgoth_R$ be the maximal ideal of $R$.
Let $R_1, \dots, R_t$ be the distinct elements of $\bbV^\infty(F,A)$
and $d_i = [ R_i/\mgoth_{R_i} : \bk(F) ]$ for $i = 1, \dots, t$.
Then we define
$$
\Delta(F,A) = [ d_1, \dots, d_t ] \quad\text{and}\quad \dic(F,A) = | \bbV^\infty(F,A) | = t
$$
where $[ d_1, \dots, d_t ]$ is an unordered $t$-tuple of positive integers.

Given $A = \kk2$ and $F \in A \setminus \bk$,
we call the elements of  $\bbV^\infty(F,A)$ the {\it dicriticals\/} of $(F,A)$, or of $F$ in $A$;
given $R \in \bbV^\infty(F,A)$, we call $[ R/\mgoth_R : \bk(F) ]$ the {\it degree of the dicritical\/}~$R$.
\end{definition}

Except for the notations,
our definitions of ``dicritical'' and of ``degree of dicritical'' are identical to those 
given by Abhyankar in \cite{Abh:DicDiv2010} (see the last sentence of page 92).
The  following fact is very useful for determining the degree list $\Delta(F,A)$ of an explicit polynomial:

\begin{lemma} \label {hf1g72wg1761878d}
Assume that $\bk$ is algebraically closed, let $A = \kk2$ and $F \in A\setminus\bk$.
Let $f : \aff^2 = \Spec A \to \aff^1 = \Spec\bk[F]$ be the morphism determined by the inclusion $\bk[F] \to A$.
Then there exists a (non unique) commutative diagram
\begin{equation}  \label {dkfjasodfla2}
\raisebox{5mm}{$\xymatrix{
\aff^2 \ar[d]_{f} \ar @<-.4ex> @{^{(}->}[r]  &  X \ar[d]^{\bar f}  \\
\aff^1 \ar @<-.4ex> @{^{(}->}[r] & \proj^1
}$}
\end{equation}
where $X$ is a nonsingular projective surface, the arrows ``$\hookrightarrow$'' are open immersions and
$\bar f$ is a morphism.
Among the irreducible components of $X \setminus \aff^2$, let $C_1, \dots, C_t \subset X$ be the curves 
that satisfy $\bar f(C_i) = \proj^1$, and for each $i=1,\dots, t$, let $d_i$ be the degree of the
surjective morphism $\bar f|_{C_i} : C_i \to \proj^1$.  Then $ \Delta(F,A) = [d_1, \dots, d_t] $.
\end{lemma}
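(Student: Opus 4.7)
My plan is to construct $X$ by resolving the indeterminacies of the natural rational map $\proj^2 \dashrightarrow \proj^1$ extending $f$, and then to identify the $C_i$ with the elements of $\bbV^\infty(F,A)$ by passing through the generic fiber of $\bar f$. For the construction, I embed $\aff^2 \hookrightarrow \proj^2$ in the standard way. The inclusion $\bk[F] \hookrightarrow A$ induces a rational map $\proj^2 \dashrightarrow \proj^1$ agreeing with $f$ on $\aff^2$; because $f$ is already a morphism there, every base point lies in $\proj^2 \setminus \aff^2$. A finite sequence of blowups of (iterated) base points yields a smooth projective $X$ and a morphism $\bar f\colon X \to \proj^1$ extending $f$. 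Since every blowup is centered outside $\aff^2$, the open immersion $\aff^2 \hookrightarrow X$ persists, giving the required diagram.

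Next, the valuation-theoretic identification of the boundary curves. Since $\aff^2$ is affine and $X$ is a smooth projective surface, $X \setminus \aff^2$ has pure codimension one. For each irreducible curve $C \subset X$ put $R_C := \Oeul_{X,C}$, a DVR with $\Frac R_C = K$. The valuative criterion of separatedness applied to $\aff^2 \hookrightarrow X$ shows that $A \subset R_C$ would force $\Spec R_C \to X$ to factor through $\aff^2$ and so place the generic point of $C$ inside $\aff^2$; hence $A \not\subset R_C \iff C \not\subset \aff^2$. Likewise $\bk(F) \subset R_C \iff \bar f(C) = \proj^1$: a dominant $\bar f|_C$ sends the generic point of $C$ to that of $\proj^1$, making $v_C$ trivial on $\bk(F)$; and if $\bar f(C)$ is a single point $p$, then either $p = \infty$ and $F \notin R_C$, or $p \in \aff^1$ corresponds to some $a \in \bk$ with $v_C(F-a) > 0$, whence $1/(F-a) \notin R_C$. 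Combining, $R_C \in \bbV^\infty(F,A)$ precisely when $C$ is a component of $X \setminus \aff^2$ satisfying $\bar f(C) = \proj^1$.

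To verify that $C \mapsto R_C$ is a bijection onto $\bbV^\infty(F,A)$, I pass to the generic fiber $X_\eta := X \times_{\proj^1} \Spec \bk(F)$, a projective curve over $\bk(F)$ with function field $K$ (smooth in characteristic zero by generic smoothness; if necessary, replace $X$ by a further blowup so this holds). Its closed points are in bijection with the elements of $\bbV(K/\bk(F))$ via local rings, and the Zariski-closure operation $p \mapsto \overline{\{p\}}^X$ transports this bijection to the dominant irreducible curves of $X$, preserving local rings. Combined with the previous paragraph, it restricts to the desired bijection $\{C_1,\dots,C_t\} \leftrightarrow \bbV^\infty(F,A)$; in particular $t = \dic(F,A)$. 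Finally $\deg(\bar f|_{C_i}) = [\bk(C_i):\bk(F)] = [R_{C_i}/\mgoth_{R_{C_i}} : \bk(F)] = d_i$, so $\Delta(F,A) = [d_1,\dots,d_t]$.

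The principal obstacle is ensuring the surjectivity of $C \mapsto R_C$, equivalently that every $R \in \bbV^\infty(F,A)$ is a prime-divisor valuation on the chosen $X$. This reduces to regularity of $X_\eta$: automatic in characteristic zero, but in positive characteristic it may require either normalizing $X_\eta$ or further blowing up $X$ so that every divisorial valuation of $K/\bk(F)$ is realized by a prime divisor on $X$.
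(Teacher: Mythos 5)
Your argument is correct and follows essentially the same route as the paper's (sketched) proof: resolve the rational map $\proj^2\dashrightarrow\proj^1$ to get $X$, identify $\bbV^\infty(F,A)$ with the local rings at the generic points of the boundary components dominating $\proj^1$, and equate $\deg(\bar f|_{C_i})$ with the residue degree $[\Oeul_{X,\xi_i}/\mgoth_{X,\xi_i}:\bk(F)]$. Your closing worry about positive characteristic is unfounded: since $X$ is regular, $\Oeul_{X,\xi}$ is a DVR at every codimension-one point, so $X_\eta$ is automatically a \emph{regular} (if not necessarily smooth) projective curve over $\bk(F)$, and regularity is all that the bijection between its closed points and $\bbV(K/\bk(F))$ requires.
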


\begin{proof}
We sketch the proof, and refer to \cite[2.3]{CassouDaigVGVBFG} for details.
For each $i = 1, \dots, t$, let $\xi_i \in X$ be the generic point of $C_i$.
Then the local rings $\Oeul_{X,\xi_i}$ are valuation rings,
$\bbV^\infty(F,A) = \setspec{ \Oeul_{X,\xi_i} }{ 1 \le i \le t }$ and, for each $i$,
the degree of $\bar f|_{C_i} : C_i \to \proj^1$ is equal to $[ K(C_i) : K(\proj^1) ] = 
[ \Oeul_{X,\xi_i} / \mgoth_{X,\xi_i} : \bk(F) ]$.
\end{proof}

We shall make tacit use of \ref{hf1g72wg1761878d} in all examples of the present paper.
In practice, we find a diagram~\eqref{dkfjasodfla2} by resolving the base points of the linear system
$\Lambda(F)$ on $\proj^2$ defined as
$ \Lambda(F) = \setspec{ V( a F^*(X,Y,Z) + b Z^n ) }{ (a:b) \in \proj^1 } $,
where $n = \deg F$ and $F^*(X,Y,Z) = Z^n F(X/Z,Y/Z)$ is the standard homogenization of $F$.

The following fact appears as ``$\text{\it GCD}( \deg \bar q_1, \dots \deg \bar q_r ) =1$''
in the proof of \cite[3.8]{Rus:fg},
and is also a special case of \cite[2.5]{CassouDaigVGVBFG}:

\begin{corollary}  \label {2384y872tqo8we1ftbs}
If $F$ is a field generator of $A = \kk2$ and $\Delta(F,A) = [d_1, \dots, d_t]$, then $\gcd(d_1, \dots, d_t) = 1$.
\end{corollary}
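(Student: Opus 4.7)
\medskip\noindent\textit{Proof plan.} My approach is to translate the statement into the vanishing of a Picard group. The starting observation is that, because $F$ is a field generator, $K = \bk(F)^{(1)}$: writing $K = \bk(F,G)$ as in the definition, $F$ must be transcendental over $\bk$ (else $\trdeg_{\bk} K \le 1$), and then $G$ must be transcendental over $\bk(F)$ (else $\trdeg_{\bk(F)} K = 0$), giving $K = \bk(F)(G) = \bk(F)^{(1)}$. Hence $K$ is the function field of $\proj^1_{\bk(F)}$, each element of $\bbV(K/\bk(F))$ is a DVR corresponding to a unique closed point of $\proj^1_{\bk(F)}$, and the ``degree'' $[R/\mgoth_R : \bk(F)]$ of a dicritical equals the degree of the corresponding closed point.

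\medskip Set $S = \bk[F] \setminus \{0\}$ and $A_\eta = S^{-1} A$. I first claim that the open immersion $\Spec A_\eta \hookrightarrow \proj^1_{\bk(F)}$ identifies $\Spec A_\eta$ with the complement of the dicriticals $R_1, \dots, R_t$. Indeed, for a closed point $P$ with local ring $R_P = \Oeul_{\proj^1_{\bk(F)}, P}$, the inclusion $A_\eta \subseteq R_P$ is equivalent to $A \subseteq R_P$ (because $R_P$ contains $\bk(F)$, hence inverts $S$), and that in turn says precisely that $R_P \notin \bbV^\infty(F,A)$. Next, $A_\eta$ is a localization of the UFD $A$, hence is a UFD; being also a one-dimensional normal Noetherian domain, it is a Dedekind domain, and therefore a PID. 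In particular $\Pic(\Spec A_\eta) = 0$.

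\medskip The excision sequence for the open immersion $\Spec A_\eta \hookrightarrow \proj^1_{\bk(F)}$ now reads
\[
\bigoplus_{i=1}^{t} \Integ \cdot [R_i] \xrightarrow{\ \phi\ } \Pic(\proj^1_{\bk(F)}) \longrightarrow \Pic(\Spec A_\eta) \longrightarrow 0,
\]
where $\phi$ sends the $i$-th generator to the divisor class of the closed point associated to $R_i$. Identifying $\Pic(\proj^1_{\bk(F)}) \isom \Integ$ via the degree map, $\phi$ becomes $(n_i) \mapsto \sum_i n_i d_i$, so its cokernel is $\Integ / \gcd(d_1, \dots, d_t)\Integ$. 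Combined with $\Pic(\Spec A_\eta) = 0$, this forces $\gcd(d_1, \dots, d_t) = 1$. The only genuine point to verify is the geometric claim of the middle paragraph --- that $\Spec A_\eta$ coincides exactly with the open complement of the dicriticals inside $\proj^1_{\bk(F)}$ --- but this reduces to the valuation-theoretic equivalence noted above; everything else is standard divisor theory on $\proj^1$ over a field.
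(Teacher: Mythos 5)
Your proof is correct. Note that the paper itself does not prove this corollary---it points to the proof of \cite[3.8]{Rus:fg} and to \cite[2.5]{CassouDaigVGVBFG}---so there is no internal argument to compare against; but your route is essentially the classical one, and every step checks out. In particular: $K=\bk(F)^{(1)}$ precisely because $F$ is a field generator, so the regular projective model of $K/\bk(F)$ is $\proj^1_{\bk(F)}$, $\bbV(K/\bk(F))$ is exactly its set of closed points, and $[R/\mgoth_R:\bk(F)]$ is the degree of the corresponding point; $S^{-1}A$ is a one-dimensional normal finitely generated $\bk(F)$-algebra, hence an affine open subscheme of that model, and your valuation-theoretic computation correctly identifies its complement with $\bbV^\infty(F,A)$; and $S^{-1}A$, being a localization of a UFD, has trivial divisor class group, so excision forces $\Integ/\gcd(d_1,\dots,d_t)\Integ=0$. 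The only facts you use without proof are standard: that a Dedekind domain of finite type over $\bk(F)$ with fraction field $K$ embeds as an open subscheme of the regular projective model (each algebra generator has only finitely many poles, and the ring equals the intersection of its localizations at maximal ideals), and that the degree map gives $\Cl(\proj^1_{k_0})\isom\Integ$ over an arbitrary field $k_0$. Two small remarks: the Dedekind/PID step is superfluous, since $\Cl$ of any noetherian normal UFD already vanishes; and your exact sequence also re-proves the nonemptiness of $\bbV^\infty(F,A)$ asserted in Definition~\ref{difp23qwjksd}, since $t=0$ would give $\Integ\isom 0$. Finally, the hypothesis that $F$ is a field generator enters exactly where it must: without $K=\bk(F)^{(1)}$ the projective model need not be $\proj^1$ and its class group need not be $\Integ$.
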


\begin{lemma}  \label {828dbvcdqscd51671w72}
Let $A = \kk2$, $F \in A\setminus\bk$ and $\Delta(F,A) = [d_1, \dots, d_t]$. Then
$$
\deg_A(F) \ge \sum_{i=1}^t d_i .
$$
\end{lemma}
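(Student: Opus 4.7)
My plan is to use the geometric interpretation of $\Delta(F,A)$ supplied by Lemma~\ref{hf1g72wg1761878d} and compare two computations of a single intersection number on a projective surface $X$. As a preliminary step, I reduce to the case where $\bk$ is algebraically closed: both $\deg_A(F)$ and the tuple $[d_1,\dots,d_t]$ are preserved under extension $A \rightsquigarrow A\otimes_\bk\bar\bk$, the latter because degrees of residue extensions sum along extensions of a valuation to the algebraic closure.

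Working over algebraically closed $\bk$, fix an arbitrary $\gamma=(X,Y)\in\Cgoth(A)$ with $n := \deg_\gamma(F)$. Use $\gamma$ to embed $\aff^2\hookrightarrow\proj^2$ with line at infinity $L$, and realize diagram~\eqref{dkfjasodfla2} by taking $\pi\colon X\to\proj^2$ to be the blow-up resolving the base points of the linear system $\Lambda(F)$; then $\bar f = f\circ\pi^{-1}$ extends to a morphism $X\to\proj^1$ and $\pi$ is an isomorphism over $\aff^2$. The base points of $\Lambda(F)$ all lie on $L$, so every exceptional divisor of $\pi$ sits above $L$, and we can write $X\setminus\aff^2 = L'\cup E_1\cup\cdots\cup E_m$ (with $L'$ the proper transform of $L$) and
\[
\pi^*L \;=\; L' + \sum_{j=1}^m n_j E_j, \qquad n_j\ge 1 \text{ for all } j,
\]
the lower bound $n_j\ge 1$ following from the fact that the support of a pulled-back Cartier divisor is the preimage of the support. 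Since $\bar f(L') = \{\infty\}$, the component $L'$ is not dicritical, so after reindexing the dicriticals $C_1,\dots,C_t$ appear among the $E_j$; say $C_i = E_i$ with multiplicity $n_i \ge 1$ in $\pi^*L$.

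The crux is then the following chain of equalities and inequality, obtained for a sufficiently general $c\in\bk$ so that $F_c := \bar f^{-1}(c)$ is the strict transform of $\Gamma_c := \overline{\{F=c\}}\subset\proj^2$, a curve of degree $n$:
\[
n \;=\; \Gamma_c\cdot L \;=\; (\pi_* F_c)\cdot L \;=\; F_c\cdot \pi^* L \;=\; \sum_{j=1}^m n_j\,(F_c\cdot E_j) \;=\; \sum_{i=1}^t n_i d_i \;\ge\; \sum_{i=1}^t d_i.
\]
Here Bezout gives the first equality, the projection formula gives the third, and for the penultimate one I use that $F_c\cdot L' = 0$ and $F_c\cdot E_j = 0$ for every non-dicritical $E_j$ (because such components lie in fibres of $\bar f$ other than $F_c$ once $c$ avoids finitely many values), while $F_c\cdot C_i = \deg(\bar f|_{C_i}) = d_i$. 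Minimising over $\gamma\in\Cgoth(A)$ yields $\deg_A(F)\ge\sum_i d_i$.

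The step I expect to be the main obstacle is the justification that each dicritical $C_i$ contributes to $\pi^*L$ with multiplicity $n_i\ge 1$: this is what makes the final inequality non-trivial, and it relies on choosing the resolution so that every blow-up happens at a point lying over $L$, which is automatic because the base locus of $\Lambda(F)$ sits in $L\cap\{F^*=0\}$. Secondary book-keeping issues are to check that all non-dicritical components of $X\setminus\aff^2$ can simultaneously be avoided by a general fibre of $\bar f$ (there are only finitely many such components, each contained in a single fibre), and the reduction to algebraically closed $\bk$ described above.
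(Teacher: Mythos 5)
Your geometric argument over an algebraically closed field is correct, and it is a genuinely different route from the paper's. The paper argues valuation-theoretically on the generic fibre: for a suitable linear form $X_\lambda=X-\lambda Y$ it shows $[\Frac A:\bk(F,X_\lambda)]\le\deg_\gamma(F)$ and that $X_\lambda$ has a pole at every dicritical valuation, whence $\sum_i d_i\le\deg\bigl(\div_\infty(X_\lambda)\bigr)\le\deg_\gamma(F)$; this needs only that $\bk$ be infinite, and finite fields are then handled by passing to $\bk(\tau)$. You instead read the inequality off the intersection number $F_c\cdot\pi^*L$ on a resolution of the pencil $\Lambda(F)$, the key point being that each dicritical occurs in $\pi^*L$ with multiplicity at least $1$; all steps of that computation (Bezout, the projection formula, $F_c\cdot L'=F_c\cdot E_j=0$ for vertical $E_j$ and general $c$, and $F_c\cdot C_i=d_i$) check out, and the argument is arguably more transparent geometrically --- at the price of requiring $\bk=\bar{\bk}$.

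The gap is in the reduction to the algebraically closed case. The assertion that residue degrees ``sum along extensions of a valuation to the algebraic closure'' is not true in general: for a dicritical $R$ with residue field $\kappa=R/\mgoth_R$, the fibre of the base-changed curve over the corresponding closed point is $\Spec(\kappa\otimes_{\bk(F)}\bar{\bk}(F))=\Spec(\kappa\otimes_\bk\bar{\bk})$, an Artinian $\bar{\bk}(F)$-algebra of dimension $[\kappa:\bk(F)]$, and the degrees of its residue fields sum to $[\kappa:\bk(F)]$ only when this algebra is reduced, i.e.\ when $\kappa$ is separable over $\bk$. Over a non-perfect field of characteristic $p$ this can fail, and then the total degree strictly drops --- which is the wrong direction for you, since what the algebraically closed case yields is $\deg_A(F)\ge\deg_{A\otimes_\bk\bar{\bk}}(F)\ge\sum_j\bar d_j$, and you need $\sum_j\bar d_j\ge\sum_i d_i$. (Note that you only need the trivial inequality $\deg_{A\otimes_\bk\bar{\bk}}(F)\le\deg_A(F)$, not preservation of $\deg_A$.) So your proof is complete in characteristic zero and, with a little care, over perfect fields, but not for the lemma as stated over an arbitrary field. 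This is precisely why the paper base-changes to the purely transcendental extension $\bk(\tau)$ instead: there, linear disjointness of $R/\mgoth_R$ from $\bk(\tau)(F)$ over $\bk(F)$ guarantees that residue degrees do not drop, and the remaining argument only uses that $\bk(\tau)$ is infinite.
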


\begin{proof}
Let $\gamma = (X,Y) \in \Cgoth(A)$; let us prove that 
\begin{equation}  \label {3e2q23bcf7atx1n2myhx2er62}
\textstyle   \deg_\gamma(F) \ge \sum_{i=1}^t d_i .
\end{equation}

We first consider the case where $\bk$ is an infinite field.
For each $\lambda \in \bk^*$, let $X_\lambda = X-\lambda Y \in \bk[X,Y] = A$. Since $A = \bk[X_\lambda,Y]$,
there exists a unique $G_\lambda(S,T) \in \bk[S,T]$ such that $F = G_\lambda(X_\lambda,Y)$.
Moreover, 
\begin{equation} \label {98dh24gjubnmzzq}
\deg_T( G_\lambda(S,T) ) \le \deg_\gamma(F).
\end{equation}
Consider the field $M = \bk(F,X_\lambda)$. Since $\Frac A = M(Y)$ and $Y$ is a root of the polynomial
$G_\lambda(X_\lambda,T) - F \in M[T]$, \eqref{98dh24gjubnmzzq} implies that 
$$
[\Frac A : \bk(F,X_\lambda) ] \le  \deg_\gamma(F), \quad \text{for every $\lambda \in \bk^*$.}
$$

Let $\bbV^\infty(F,A) = \{ R_1, \dots, R_t \}$.
For each $i=1,\dots,t$,
let $d_i = [R_i/\mgoth_{R_i} : \bk(F)]$ and let $v_i : (\Frac A)^* \to \Integ$ be the valuation such that $R_i = R_{v_i}$;
note that at most one element $\lambda_i \in \bk^*$ satisfies $v_i( (X/Y) - \lambda_i )>0$.
Since $\bk$ is infinite, we may choose
$\lambda \in \bk^*$ satisfying
$$
\text{$v_i( (X/Y) - \lambda )\le 0$ for all $i=1,\dots,t$.}
$$
We claim:
\begin{equation}  \label {09o9a09f9d8hasilsisd}
\text{$v_i( X_\lambda ) < 0$ for all $i=1,\dots,t$.}
\end{equation}
Indeed, let $i \in \{1, \dots, t\}$ and observe that $\min(v_i(X),v_i(Y))<0$.
If $v_i(X) \neq v_i(Y)$ then $v_i(X_\lambda) = \min(v_i(X),v_i(Y))<0$.
If $v_i(X)=v_i(Y)$ then $v_i(Y) < 0$
and $v_i( X_\lambda ) = v_i(Y) + v_i( (X/Y) - \lambda ) \le v_i(Y) < 0$.
This proves \eqref{09o9a09f9d8hasilsisd}.

Since $X_\lambda \in A$, $v(X_\lambda)\ge0$ for all valuations $v$ of $\Frac A$ over $\bk(F)$ other than $v_1, \dots, v_t$.
This and \eqref{09o9a09f9d8hasilsisd} implies that the divisor of poles of $X_\lambda$ is
$\div_\infty(X_\lambda) = \sum_{i=1}^t m_i R_i$  where $m_i = -v_i(X_\lambda) > 0$ for all $i$.
Consequently,
$$
\textstyle
\sum_{i=1}^t d_i \le  \sum_{i=1}^t m_i d_i = \deg\big( \div_\infty(X_\lambda) \big)
 =  [\Frac A : \bk(F,X_\lambda)] \le \deg_\gamma(F),
$$
so \eqref{3e2q23bcf7atx1n2myhx2er62} is true whenever $\bk$ is an infinite field.

Now drop the assumption on $\bk$ (so $\bk$ is now an arbitrary field).
Pick an indeterminate $\tau$ transcendental over $\Frac A$,
let $\hat\bk = \bk(\tau) = \bk^{(1)}$ and $\hat A = \hat\bk[X,Y] = \hat\bk^{[2]}$.
Since $F \in A \subset \hat A$, we may consider $\bbV^\infty(F,A)$ and $\bbV^\infty(F,\hat A)$.
It is easy to see that
\begin{equation} \label {3274dcb7bd4j9sty}
\text{$S \mapsto S \cap \Frac A$ is a surjective map $\bbV^\infty(F,\hat A) \to \bbV^\infty(F,A)$.}
\end{equation}
Let us adopt the temporary notation $\Delta^\sharp(F,A) = \sum_{i=1}^t d_i$ where $\Delta(F,A) = [d_1, \dots, d_t]$.
It's enough to show:
\begin{equation}  \label {9df09x9rhdh193y}
\Delta^\sharp(F,\hat A) \ge \Delta^\sharp(F,A).
\end{equation}
Indeed, we have $\deg_\gamma(F) \ge \Delta^\sharp(F,\hat A)$
by the first part of the proof and the fact that $\hat\bk$
is an infinite field, so if \eqref{9df09x9rhdh193y} is true then we are done.
To prove \eqref{9df09x9rhdh193y},
consider $S \in \bbV^\infty(F,\hat A)$, let $R = S \cap \Frac A$ and consider the field extensions:
$$
\xymatrix@R=1mm{
 & S/\mgoth_S  \\
\hat\bk(F) \ar @{-}[ur] && R/\mgoth_R  \ar @{-}[ul] \\
 & \bk(F) \ar @{-}[ul] \ar @{-}[ur]
}
$$
Since  $R/\mgoth_R$ is algebraic over $\bk(F)$ and 
$\hat\bk(F)$ is purely transcendental over $\bk(F)$, 
$R/\mgoth_R$ is linearly disjoint from $\hat\bk(F)$ over $\bk(F)$; thus
$[ S/\mgoth_S : \hat\bk(F) ] \ge [ R/\mgoth_R : \bk(F) ]$.
As this holds for each  $S \in \bbV^\infty(F,\hat A)$, \eqref{3274dcb7bd4j9sty} implies 
$$
\sum_{S \in \bbV^\infty(F,\hat A)} [ S/\mgoth_S : \hat\bk(F) ]\ \ \ge
\sum_{R \in \bbV^\infty(F,A)} [ R/\mgoth_R : \bk(F) ],
$$
which is exactly \eqref{9df09x9rhdh193y}. 
\end{proof}

\begin{observation}[{\cite[Rem.\ after 1.3]{Rus:fg}}] \label {82364r5gg658943k}
Let $F$ be a field generator of $A=\kk2$.
Then $F$ is a good field generator of $A$ if and only if \/\mbox{\rm ``$1$''} occurs in the list $\Delta(F,A)$.
\end{observation}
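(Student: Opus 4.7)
The plan is to reduce both directions to the standard identity
$[K:\bk(F)(\Phi)] = \deg \div_\infty(\Phi)$
valid for each $\Phi \in K$ transcendental over $\bk(F)$, where $\div_\infty(\Phi)$ denotes the pole divisor of $\Phi$ computed on the smooth projective model of $K/\bk(F)$ and each place $R$ is weighted by its degree $[R/\mgoth_R : \bk(F)]$. Note that the hypothesis that $F$ be a field generator is used twice: in the form $K = \bk(F)(G_0)$ it ensures that the smooth projective model of $K/\bk(F)$ is just $\proj^1_{\bk(F)}$, and in particular that the places of $K/\bk(F)$ of degree $1$ are precisely the $\bk(F)$-rational points of $\proj^1_{\bk(F)}$.

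For the forward implication, suppose $K = \bk(F,G)$ with $G \in A$. Every place $R$ of $K/\bk(F)$ satisfies either $A \subseteq R$---whence $G \in R$ and $v_R(G) \ge 0$---or else $A \nsubseteq R$, meaning $R \in \bbV^\infty(F,A)$. Thus $\div_\infty(G)$ is supported on $\{R_1,\dots,R_t\}$, and the identity gives $\sum_{i=1}^t m_i d_i = [K : \bk(F,G)] = 1$, with $m_i = \max(0, -v_i(G)) \in \Nat$. The only solution is $m_i = 1 = d_i$ for a single index $i$, so $1 \in \Delta(F,A)$.

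For the converse, suppose $d_1 = 1$, so $R_1$ is a rational point of $\proj^1_{\bk(F)}$. Fixing $G_0$ with $K = \bk(F)(G_0)$, the point $R_1$ is either the point at infinity or of the form ``$G_0 = a$'' for some $a \in \bk(F)$. In the first case I set $\Phi := G_0$, in the second $\Phi := 1/(G_0-a)$; either way $K = \bk(F,\Phi)$ and the pole divisor of $\Phi$ on $\proj^1_{\bk(F)}$ is exactly $R_1$, so $v_R(\Phi) \ge 0$ for every place $R$ of $K/\bk(F)$ other than $R_1$. To promote $\Phi$ to an element of $A$, write $\Phi = p/q$ with $p,q \in A$ and observe that the poles of $\Phi$ on $\Spec A$ lie at the height-one primes dividing $q$, so they are finite in number. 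Such a prime $(h) \subset A$ is of one of two types. If $F$ is nonconstant on $V(h)$, then $A_{(h)}$ is a place of $K/\bk(F)$ distinct from $R_1$, forcing $v_{(h)}(\Phi) \ge 0$ automatically. Otherwise $h$ divides $F - c$ for some $c \in \bk$, and the pole can be killed by multiplying $\Phi$ by a high enough power of $F - c$. A direct check shows that $F-c$ is a unit modulo every horizontal prime and modulo every vertical prime lying over a value $c' \ne c$, so the correction introduces no new poles on $\Spec A$; and since $F-c \in \bk(F)^*$, the correction acts trivially on all places of $K/\bk(F)$. Iterating over the finitely many offending vertical primes, one obtains $G := \Phi \cdot \prod_j (F-c_j)^{n_j} \in A$ with $\bk(F,G) = \bk(F,\Phi) = K$, exhibiting $F$ as a good field generator.

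The main delicacy lies in the last step: the height-one primes of $A$ and the places of $K/\bk(F)$ form overlapping but distinct sets, and one must carefully disentangle the two so as to verify that killing a pole at a vertical prime does not accidentally produce a pole elsewhere on $\Spec A$ or disturb the construction of $\Phi$ at a place of $K/\bk(F)$.
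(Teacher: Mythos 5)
The paper does not actually prove this observation: it is quoted from Russell \cite{Rus:fg} (the remark following his Proposition~1.3), where the argument is phrased geometrically in terms of a projective model $\bar f : X \to \proj^1$ as in Lemma~2.2 --- goodness of $F$ amounts to one of the horizontal curves at infinity being a \emph{section} of $\bar f$, i.e.\ a dicritical of degree $1$. Your valuation-theoretic argument is the function-field translation of the same idea and is essentially correct. The forward direction, via $\deg\div_\infty(G)=[K:\bk(F)(G)]=1$ and the observation that $G\in A$ forces $\div_\infty(G)$ to be supported on $\bbV^\infty(F,A)$, is clean and is in fact the same bookkeeping that the paper uses in the proof of Lemma~2.4. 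The converse is where the real content lies, and your strategy --- take $\Phi$ with pole divisor exactly the degree-one place $R_1$, then clear the remaining poles on $\Spec A$, which can only sit at ``vertical'' height-one primes because $A_{\pgoth}$ is a place of $K/\bk(F)$ distinct from $R_1$ whenever $\pgoth$ is horizontal --- is sound; the concluding membership $G\in A$ follows from normality of $A$ ($A=\bigcap_{\haut\pgoth=1}A_\pgoth$), which you should state explicitly.

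One small repair is needed because the observation is asserted for an \emph{arbitrary} field $\bk$ (the paper's standing convention outside of examples). If $\pgoth=(h)$ is a vertical prime, the residue of $F$ in $A/(h)$ is algebraic over $\bk$ but need not lie in $\bk$, so it is not true in general that $h$ divides $F-c$ for some $c\in\bk$; rather $h$ divides $\mu(F)$, where $\mu\in\bk[T]$ is the minimal polynomial of that residue. Replacing the correcting factors $(F-c_j)^{n_j}$ by $\mu_j(F)^{n_j}$ fixes this: each $\mu_j(F)$ still lies in $A$ (so introduces no new poles at height-one primes of $A$) and in $\bk(F)^*$ (so is a unit at every place of $K/\bk(F)$ and does not change $\bk(F,\Phi)$). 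With that adjustment the proof is complete.
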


\begin{smallremark}
Recall that in all examples of Part~I, $\bk$ is assumed
to be an algebraically closed field of characteristic zero (see the introduction to Part~I).
The terms ``Newton polygon'' and ``Newton tree'' are sometimes used in the examples below.
The Newton polygon of a polynomial $\sum_{ij} a_{ij}X^iY^j$ is the convex hull in $\Reals^2$
of $\{0\} \cup \setspec{(i,j)}{ a_{ij} \neq 0 }$; the sides of that polygon that are not included 
in the axes of coordinates are called the ``faces'' of the Newton polygon.
See \cite{Cassou_Russellfest} for the notion of Newton tree. 
From the Newton tree at infinity of $F(X,Y)$, one can deduce the
genus of the curve ``$F(X,Y)=t$'' for general $t \in \bk$;
however, readers not familiar with these notions may ignore Newton trees altogether,
and use the well known genus formula (more is said about this in \ref{2u37vd81182ejfaeu}).
\end{smallremark}

\begin{example}
\label {2u37vd81182ejfaeu}
The first example of bad field generator was given by Jan
(A. Sathaye kindly gave us the equation of that polynomial).
Let  $A = \bk[X,Y] = \kk2$ and
$$F_J(X,Y)=Y(X^8Y^4-1)^2+3X^3Y^2(X^8Y^4-1)+3X^6Y^3+X \in A . $$
It has two points at infinity. 
The Newton polygon has two faces, one linking the point $(1,0)$ to the point $(16,9)$ with slope $3/5$
and the other one linking the point $(0,1)$ to $(16,9)$ with slope $1/2$.

Let $F_J(X,Y,Z) \in \bk[X,Y,Z]$ denote the standard homogenization of $F_J$.
At the point $[0:1:0]$, we have 
$$
F_J(X,1,Z)+tZ^{25}=(X^8-Z^{12})^2+3X^3(X^8-Z^{12})Z^8+3X^6Z^{16}+XZ^{24}+tZ^{25}.
$$
After the blowups: $X\to XZ$ and divide by $Z^{16}$,
$Z\to ZX$ and divide by $X^8$,
and $X\to XZ$ and divide by $Z^8$,
the strict transform of $F_J(X,1,Z)+tZ^{25}$ is
$$
(X^4-1)^2+3X^2(X^4-1)Z+3X^4Z^2+X^2Z^3+tXZ^2 .
$$
In view of \ref{hf1g72wg1761878d}, this shows that
we have four dicriticals of degree $2$, corresponding to the roots of $X^4-1$
(these four dicriticals are over the point $[0:1:0]$).
At the point $[1:0:0]$, we have 
$$F_J(1,Y,Z)+tZ^{25}=Y(Y^4-Z^{12})^2+3Y^2(Y^4-Z^{12})Z^8+3Y^3Z^{16}+Z^{24}+tZ^{25}$$
After the blowups: $Y\to YZ^2$ and divide by $Z^{18}$, $Z\to ZY$ and divide by $Y^6$, and $Y\to YZ^2$ and divide by $Z^6$,
we get 
$$
Y^3(1-Z^4)^2+3Y^2(1-Z^4)+3Y+1+tYZ^3 .
$$
After the change $Y\to Y-1$ we have
$Y^3+h(t)Z^3+\sum a_{\alpha_1,\alpha_2}Y^{\alpha_1}Z^{\alpha_2}$ with $\alpha_1+\alpha_2>3$,
and $h(t)$ is a polynomial in $t$ of degree $1$.
This proves that (over the point $[1:0:0]$) we have one dicritical of degree $3$. Then 
$$
\Delta(F_J,\bk[X,Y])=[2,2,2,2,3] .
$$
From the above computations one deduces the configuration of singularities at infinity,
from which one obtains the Newton tree at infinity of $F_J$ shown in Figure 1.
From that Newton tree---or directly from the  configuration of singularities at infinity and the genus formula---it
follows that (for general $t \in \bk$) the
plane curve ``$F_J(X,Y)=-t$'' is rational.
So $F_J$ is a rational polynomial and hence (\ref{723te762gd1827}) a field generator of $A=\bk[X,Y]$.
By \ref{82364r5gg658943k}, $F_J$ is a bad field generator of $A$.
\end{example}

\begin{figure}[ht]

\begin{center}

\includegraphics{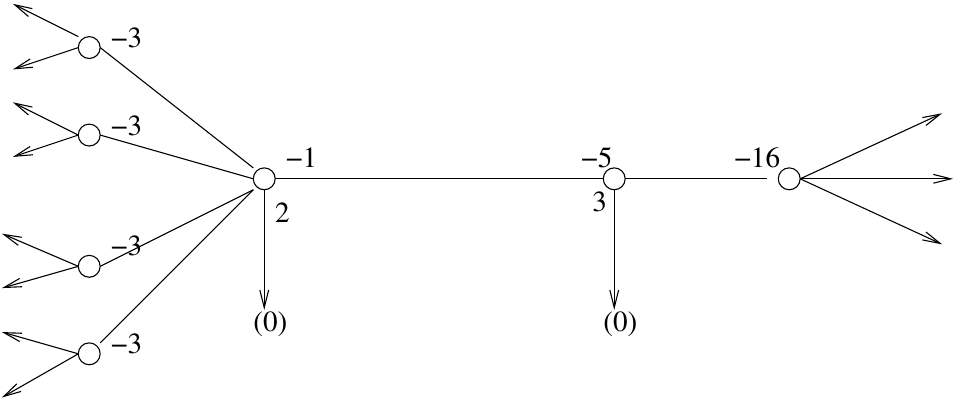}

\caption{}

\end{center}

\end{figure}  

The second example of bad field generator was given by Russell in \cite{Rus:fg2}, and is the following polynomial
of degree $21$:\footnote{There were some misprints in the polynomial given in \cite{Rus:fg2}.  The polynomial that is
displayed here is the correct one.}
\begin{equation*}
( Y^2(XY + 1)^4 + Y(2XY + 1)(XY + 1) + 1) (Y(XY + 1)^5 +2XY(XY + 1)^2 +X)\,.
\end{equation*}
We shall refer to this polynomial as ``Russell's polynomial''.  It is a bad field generator of $\bk[X,Y]$, where
$\bk$ is an arbitrary field.
The same paper contains the following fact, valid for an arbitrary field $\bk$
(we use the notation $\Integ_{\le25} =\setspec{x\in\Integ}{x \le 25}$):

\begin{paragItalic}  \label {BFGhavedegatleast21}
Let $A=\kk2$, then 
$$
\setspec{ \deg_A(F) }{ \text{$F$ is a bad field generator of $A$} } \cap \Integ_{\le25} = \{21, 25\}.
$$
\end{paragItalic}

\begin{example} \label {872349726edh83uw}
We give an infinite family of bad field generators of degree $21$.
Given $(a_0,a_1,a_2) \in \bk^3$, let $\phi(T)= T^3+a_2T^2+a_1T+a_0 \in \bk[T]=\kk1$,
and define $F_R(X,Y) \in A = \bk[X,Y]$ by
\begin{multline} \label {ydg17gmc4c7tydh}
F_R(X,Y)=X^3\phi(XY)^3+X^2\phi(XY)^2(4XY+a_2+2)\\
+ X\phi(XY)(6X^2Y^2+(4+3a_2)XY+1+a_1+a_2) \\
+4X^3Y^3+(2+3a_2)X^2Y^2+(a_2+2a_1)XY+Y.
\end{multline}
Let us sketch the proof of:
\begin{equation} \label {fjp9219020idejd0}
\begin{minipage}[t]{.9\textwidth}
\it
$F_R$ is a bad field generator of $A$ of degree $21$ and $\Delta(F_R,A) = [2,3,\dots,3]$,
where the number of occurrences of ``\,$3$'' is equal to the number of distinct roots of $\phi$.
\end{minipage}
\end{equation}
For $t \in \bk$, consider the standard homogenization
$F_R(X,Y,Z)-tZ^{21}$ of $F_R(X,Y) - t$ and the corresponding 
local equations $F_R(X,1,Z)-tZ^{21}$ and $F_R(1,Y,Z)-tZ^{21}$ at the points
$[0:1:0]$ and $[1:0:0]$ respectively.

At the point $[0:1:0]$, after the blowups $X\to XZ$ dividing by $Z^{12}$,
$Z\to XZ$ dividing by $X^8$, $X\to XZ^2$ dividing by $Z^8$, changing $X \to X-1$,
blowup $Z \to XZ$ dividing by $X^2$, again $X\to XZ$ dividing by $Z^2$,
changing $X\to X+1$, we get a dicritical of degree $2$. 

At the point $[1:0:0]$, we apply $Y\to YZ^2$ dividing by $Z^{18}$ and
we get $k$ dicriticals of degree $3$ where $k \in \{1,2,3\}$ is the number of distinct roots of $\phi$. 

So $\Delta(F_R,A) = [2,3,\dots,3]$ where ``$3$'' occurs $k$ times.
The genus formula shows that $F_R$ is a rational polynomial and hence a field generator of $A=\bk[X,Y]$;
by \ref{82364r5gg658943k}, $F_R$ is a bad field generator of $A$, proving \eqref{fjp9219020idejd0}.

Let us declare that $F,G \in A$ are {\it equivalent\/} if there exists 
$\theta \in \Aut_\bk(A)$ such that  $\theta\big( \bk[ G ] \big) = \bk[ F ]$.
We shall now prove:\footnote{The meaning of ``almost all'' is made precise in \eqref{fj92198d0192}.}
\begin{equation} \label {188092e989oye73aw8}
\begin{minipage}[t]{.9\textwidth}
\it The family $\{ F_R \}$ contains almost all
bad field generators of degree $21$ up to equivalence,
and the members of $\{ F_R \}$ are pairwise nonequivalent.
\end{minipage}
\end{equation}

Let $V$ be the set of bad field generators $F$ of $A=\bk[X,Y]$ of degree $21$ satisfying:
\begin{enumerate}

\item[(i)] the support of $F$ with respect to $(X,Y)$ is included in $\langle (0,0), (9,0), (0,12), (9,12) \rangle$;

\item[(ii)] if we write $F = \sum_{i,j} a_{i,j}X^iY^j$ ($a_{i,j} \in \bk$) then
$$
a_{9,12}=1 \text{\ \ and\ \ } a_{9,0}=a_{0,12}=a_{0,0}=0.
$$
\end{enumerate}
See \ref{90ojhgdew33689jhhf} for ``support'' and ``$\langle \dots \rangle$''.
The set of all polynomials satisfying (i) can be identified with $\aff^{130}$, so we may
view $V$ as a subset of $\aff^{130}$.
The appendix of \cite{Rus:fg2} proves, among other things, that 
(a)~every bad field generator of $A$ of degree $21$ is equivalent to an element of $V$;
and (b)~$V$ is a locally closed subset of $\aff^{130}$, and is an irreducible algebraic variety of dimension $5$
(isomorphic to the $V_1$ given there).
We shall prove:
\begin{equation} \label {fj92198d0192}
\begin{minipage}[t]{.9\textwidth}
\it   There exists a dense Zariski-open subset $U$ of $V$ such that
each element of $U$ is equivalent to a member of $\{ F_R \}$.
\end{minipage}
\end{equation}
We begin by enlarging the family $\{ F_R \}$.
Let $W = (\bk^*)^2 \times \bk^3 \subset \aff^5$ and, for each $(b,a,a_0,a_1,a_2) \in W$, define
$F(b,a,a_0,a_1,a_2 ; X,Y) \in \bk[X,Y]$ by
\begin{multline*}
F(b,a,a_0,a_1,a_2 ; X,Y)=X^3\phi(XY)^3b^4+X^2\phi(XY)^2(4b^3XY+a_2b^3+2ab^2)\\
+ X\phi(XY)(6X^2Y^2b^2+(4ab+3a_2b^2)XY+a^2+a_1b^2+ a a_2 b) \\
+4X^3Y^3b+(2a+3a_2b)X^2Y^2+(aa_2+2a_1b)XY+Y .
\end{multline*}
Note that $F(1,1,a_0,a_1,a_2 ; X,Y)$ is the right-hand-side of Equation~\eqref{ydg17gmc4c7tydh}
and hence is a member of the family $\{ F_R \}$.
One can check that, for every $(b,a,a_0,a_1,a_2) \in W$,
$t \in \bk^*$ and $(r,s,u) \in \Integ^3$ satisfying $3s-2r+u=0$,
one has
$$
t^{-r} F(b,a,a_0,a_1,a_2 ; X, Y ) = F(b t^u, at^{u+s-r},a_0t^{3(s-r)},a_1t^{2(s-r)},a_2t^{s-r} ; t^sX, t^{-r}Y )
$$
and hence
$$
F(b,a,a_0,a_1,a_2 ; X,Y) \sim F(b t^u, at^{u+s-r},a_0t^{3(s-r)},a_1t^{2(s-r)},a_2t^{s-r} ; X,Y).
$$
Taking $(r,s,u)=(1,1,-1)$ and $t=b$ gives 
$$
F(b,a,a_0,a_1,a_2 ; X,Y) \sim F(1, a/b ,a_0, a_1, a_2 ; X,Y);
$$
then taking $(r,s,u)=(3,2,0)$ and $t=a/b$ gives 
$$
F(1, a/b ,a_0, a_1, a_2 ; X,Y) \sim 
\textstyle
F(1, 1 ,a_0(\frac ab)^{-3}, a_1(\frac ab)^{-2}, a_2(\frac ab)^{-1} ; X,Y).
$$
So, for every $(b,a,a_0,a_1,a_2) \in W$, $F(b,a,a_0,a_1,a_2 ; X,Y)$ is equivalent to a member
of $\big\{ F_R \big\}$.
In particular, $F(b,a,a_0,a_1,a_2 ; X,Y)$ is a bad field generator.
It follows that if we define $G(b,a,a_0,a_1,a_2 ; X,Y)= b^{-4} F(b,a,a_0,a_1,a_2 ; Y,X)$,
then $G(b,a,a_0,a_1,a_2 ; X,Y) \in V$. So we have the morphism of varieties
$$
\psi : W \to V, \quad 
(b,a,a_0,a_1,a_2) \mapsto G(b,a,a_0,a_1,a_2 ; X,Y)
$$
and each element of the image of $\psi$ is equivalent to a member of $\big\{ F_R \big\}$.
Direct calculation shows that $G(b,a,a_0,a_1,a_2 ; X,Y) = \sum_{ij} c_{ij}X^iY^j$ satisfies
\begin{gather*}
c_{7,9} = 4/b,\\
c_{2,2} = (2a+3a_2b)/b^4,
\quad
c_{4,5} = (4a+9a_2b)/b^3,\\
c_{1,1} = (2a_1b+aa_2)/b^4,
\quad
c_{0,1} = a_0(a^2+aa_2b+a_1b^2)/b^4 .
\end{gather*}
These equations show that if  $Q=\sum_{ij} c_{ij}X^iY^j$ is an element of $V$ such that $c_{0,1}\neq 0$ then
at most one $w \in W$ satisfies  $\psi(w) = Q$.
Since the image of $\psi$ is not included in the zero-set of $c_{0,1}$, and since $\dim(W)=\dim V$,
it follows that $\psi$ is a birational morphism.
In particular, the image of $\psi$ contains a dense Zariski-open subset $U$ of $V$.
Since we have already established that each element of the image of $\psi$ 
is equivalent to a member of $\{ F_R \}$, \eqref{fj92198d0192} is proved.
This also proves the first part of claim \eqref{188092e989oye73aw8}.
We don't know whether $\{ F_R \}$ contains {\it all\/} bad field generators of degree $21$ up to equivalence. 

The aforementioned appendix also describes the possible configurations of singularities at infinity,
for a bad field generator $F$ of $A=\kk2$ of degree $21$.  That analysis
(from the last paragraph of p.~328 to the diagram at the top of p.~330) implies that
$\Delta(F,A)$ must be one of $[2,3]$, $[2,3,3]$, $[2,3,3,3]$.
It is therefore interesting to note that these three lists are realized by the family $\{ F_R \}$.

To prove the second part of claim \eqref{188092e989oye73aw8}, consider elements $F$ and $G$ of  $\{ F_R \}$
and suppose that $F \sim G$.
We may write $F = F(1,1,a_0,a_1,a_2 ; X,Y)$ and $G = F(1,1,b_0,b_1,b_2 ; X,Y)$
with $(a_0,a_1,a_2),(b_0,b_1,b_2) \in \bk^3$.
There exists $\theta \in \Aut_\bk A$ such that $\theta\big( \bk[F] \big) = \bk[G]$;
then $\theta(F) = \alpha G + \beta$ for some $\alpha \in \bk^*$ and $\beta\in\bk$.
The fact that the supports of $F$ and $G$ are included in
$\langle (0,0), (12,0), (0,9), (12,9) \rangle \setminus \{ (0,0), (12,0), (0,9) \}$
implies that $\theta(X)=uX$ and $\theta(Y)=vY$ for some $u,v \in \bk^*$.
Then we must have $F(uX,vY)=u^{12}v^9G(X,Y)$.
Write $H(X,Y)= F(uX,vY) - u^{12}v^9G(X,Y) = \sum_{i,j} h_{i,j}X^iY^j$; each $h_{i,j}$ is a polynomial
expression in $(u,v,a_0,a_1,a_2,b_0,b_1,b_2)$ that can be computed explicitly, and we must have $h_{i,j}=0$ for
all $i,j$. Calculation gives $h_{9,7} = -4u^9v^7(u^3v^2-1)$, so $u^3v^2=1$. So there exists $r \in \bk^*$ such
that $u=r^2$ and $v=sr^{-3}$ where $s=\pm1$.
After substituting these values in the expression of $H$, we find 
$h_{11,8} = 3(ra_2-sb_2)/r^3$, so $b_2=sr a_2$.
After substituting this value, we find $h_{2,2} = 2(r-s)/r^3$, so $r=s$ and hence $u=1=v$.
It follows that $F(1,1,a_0,a_1,a_2 ; X,Y) = F(1,1,b_0,b_1,b_2 ; X,Y)$ and hence that $(a_0,a_1,a_2) = (b_0,b_1,b_2)$.
This completes the proof of \eqref{188092e989oye73aw8}.

Let us also point out that Russell's polynomial is $F(1,-1,1, 3, 3 ; Y,X)$, which is equivalent to
the member $F(1,1,-1,3,-3 ; X,Y)$ of $\{F_R\}$, i.e., the member corresponding to $\phi(T)=(T-1)^3$.
It has $\Delta(F,A) = [2,3]$ and its Newton tree is given in Figure 2.
\end{example}

\begin{figure}[ht]
 
\begin{center}

\includegraphics{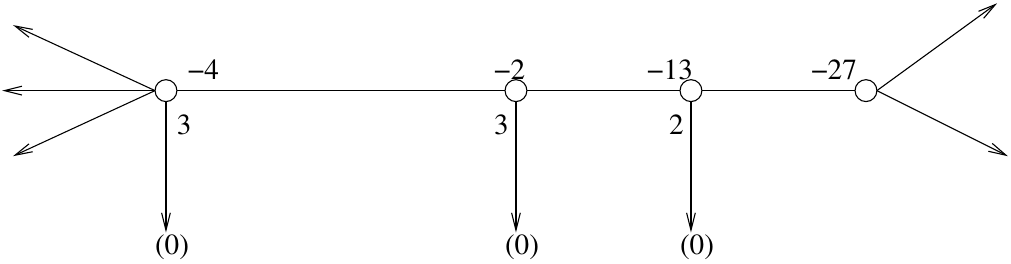}

\caption{}
\end{center}

\end{figure}

The next example gives a new family of bad field generators that shows that neither the number of dicriticals
nor their degrees are bounded (and we show in \ref{pf98287217ys82} that these bad field generators are lean).
This family generalizes Jan's polynomial.

\begin{example} \label {2983y872613765xrenucu}

Let $\phi(X) = X^n+c_{n-1}X^{n-1}+c_{n-2}X^{n-2}+\cdots+c_0 \in \bk[X]=\kk1$ where $n \ge 4$,
$c_0, \dots, c_{n-1} \in \bk$ and $c_0 \neq0$.
Denote by
$$\tilde{\phi}(X)=1+c_{n-1}X+c_{n-2}X^2+\cdots+c_0X^n$$
the reciprocal polynomial of $\phi$.
Let 
\begin{multline*}
F_{CND}(X,Y)= 
\frac{1}{\tilde{\phi}(X^{n-2}Y)}
(X+c_{n-1}X^{n-1}Y  +\cdots \\
+c_{n-i}X^{n-i}Y(X^{n-1}Y+\tilde{\phi}(X^{n-2}Y))^{i-1} 
+\cdots+c_0Y(X^{n-1}Y+\tilde{\phi}(X^{n-2}Y))^{n-1})
\end{multline*}
Thus $F_{CND} \in \bk(X,Y)$; since 
$$
X+c_{n-1}X^{n-1}Y+\cdots+c_{n-i}X^{(n-2)i+1}Y^i+\cdots+c_0X^{(n-2)n+1}Y^n=X\tilde{\phi}(X^{n-2}Y),
$$
we have $F_{CND}\in \bk[X,Y]$. This polynomial has degree $n(n-1)(n-2)+1$.
The monomial with top degree is $X^{n(n-2)^2}Y^{(n-1)^2}$. The Newton polygon has two faces. One face links the point 
$(1,0)$ to the point $(n(n-2)^2,(n-1)^2)$ and has slope $(n^2-3n+1)/(n-1)$. The other face links the point $(0,1)$ to the point $(n(n-2)^2,(n-1)^2)$ and has slope $n-2$.
We shall now prove:
\begin{equation}  \label {fo8dgs76m17w981dh912}
\begin{minipage}[t]{.9\textwidth}
\it
$F_{CND}$ is a bad field generator of $\bk[X,Y]$ with
$$
\Delta(F_{CND},\bk[X,Y])=[n-2,\cdots,n-2,n-1],
$$
where the number of dicriticals of degree $n-2$
is equal to the number of distinct roots of $\phi$.
\end{minipage}
\end{equation}

Let
$$
\Phi(X,Y,Z)=Z^{n(n-1)}+c_{n-1}X^{n-2}YZ^{(n-1)^2}+\cdots+c_0X^{n(n-2)}Y^n
$$
$$
P(X,Y,Z)=X^{n-1}YZ^{n(n-2)}+\Phi(X,Y,Z)
$$
\begin{multline*}
Q(X,Y,Z)=XZ^{n(n-1)^2}+c_{n-1}X^{n-1}YZ^{(n-1)(n^2-n-1)}+\cdots \\
+c_{n-i}X^{n-i}YP^{i-1}Z^{(n-i)(n^2-n-1)} + \cdots+c_0YP^{n-1}
\end{multline*}
$$F_t(X,Y,Z)=\frac{Q(X,Y,Z)}{\Phi(X,Y,Z)}-tZ^{n(n-1)(n-2)+1}$$

Consider first
$$
\Phi(1,Y,Z)=Z^{n(n-1)}+c_{n-1}YZ^{(n-1)^2}+\cdots+c_0Y^n
$$
$$
P(1,Y,Z)=YZ^{n(n-2)}+\Phi(1,Y,Z)
$$
\begin{multline*}
Q(1,Y,Z)=Z^{n(n-1)^2}+c_{n-1}YZ^{(n-1)(n^2-n+1)}+\cdots \\
+c_{n-i}YP^{i-1}(1,Y,Z)Z^{(n-i)(n^2-n+1)}+ \cdots+c_0YP^{n-1}(1,Y,Z)
\end{multline*}
$$
F_t(1,Y,Z)=\frac{Q(1,Y,Z)}{\Phi(1,Y,Z)}-tZ^{n(n-1)(n-2)+1}
$$
Consider the map $Y\to YZ^{n-2}$. Then:

$\Phi(1,Y,Z)=Z^{n(n-2)}\Phi_1(Y,Z)$, where  $\Phi_1(Y,Z)= Z^{n}+c_{n-1}YZ^{(n-1)}+\cdots+c_0Y^n$;

$P(1,Y,Z)=Z^{n(n-2)}P_1(Y,Z)$, where $P_1(Y,Z)= YZ^{n-2}+ \Phi_1(Y,Z)$;

$Q(1,Y,Z)=Z^{(n-2)(n^2-n+1)}Q_1(Y,Z)$, where 
\begin{multline*}
Q_1(Y,Z)= Z^{n^2-2n+2}+c_{n-1}YZ^{n^2-1}+ \cdots \\ 
+c_{n-i}YP_1^{i-1}(Y,Z)Z^{(n-i)(n+1)}+ \cdots+c_0YP_1^{n-1}(Y,Z) ;
\end{multline*}

$F_t(1,Y,Z)=Z^{(n-2)(n-1)^2}F_1(Y,Z)$, where $F_1(Y,Z)=\frac{Q_1(Y,Z)}{\Phi_1(Y,Z)}-tZ^{n^2-3n+3}$.

Consider the map $Z\to YZ$. Then:

$\Phi_1(Y,Z)=Y^n\phi(Z)$;

$P_1(Y,Z)=Y^{n-1}P_2(Y,Z)$, where $P_2(Y,Z)=Z^{n-2}+Y\phi(Z)$;

$Q_1(Y,Z)=Y^{n^2-2n+2}Q_2(Y,Z)$, where
\begin{multline*}
Q_2(Y,Z)=Z^{n^2-2n+2}+c_{n-1}Y^{2(n-1)}Z^{n^2-1}+ \cdots \\
+c_{n-i}Y^{2(n-i)}P_2^{i-1}(Y,Z)Z^{(n-i)(n+1)}+ \cdots+c_0P_2^{n-1}(Y,Z) ;
\end{multline*}

$F_1(Y,Z)=Y^{n^2-3n+2}F_2(Y,Z)$, where $F_2(Y,Z)=\frac{Q_2(Y,Z)}{\phi(Z)}-tYZ^{n^2-3n+3}$.

Finally, consider the map $Y \to YZ^{n-2}$.  Then:

$P_2(Y,Z)=Z^{n-2}P_3(Y,Z)$, where $P_3(Y,Z)=1+Y\phi(Z)$;

$Q_2(Y,Z)=Z^{(n-1)(n-2)}Q_3(Y,Z)$, where 
\begin{multline*}
Q_3(Y,Z)=Z^{n}+c_{n-1}Y^{2(n-1)}Z^{(n-1)(2n-1)}+ \cdots \\
+c_{n-i}Y^{2(n-i)}P_3^{i-1}(Y,Z)Z^{(n-i)(2n-1)}+ \cdots+c_0P_3^{n-1}(Y,Z) ;
\end{multline*}

$F_2(Y,Z)=Z^{(n-1)(n-2)}F_3(Y,Z)$, where $F_3(Y,Z)=\frac{Q_3(Y,Z)}{\phi(Z)}-tYZ^{n-1}$.

We have 
$P_3(Y,Z)= 1+c_0Y+c_1YZ+\cdots$, then $F_3(Y,0)=(1+c_0Y)^{n-1}$.
Let $Y_1=1+c_0Y$. We get $F_4(Y_1,Z)=Y_1^{n-1}-tZ^{n-1}+\sum a_{\alpha_1,\alpha_2}Y_1^{\alpha_1}Z^{\alpha_2}$ with $\alpha_1+\alpha_2\geq n-1$.
This proves that, over the point $[1:0:0]$, we get one dicritical of degree $n-1$.

Consider next
$$
\Phi(X,1,Z)=Z^{n(n-1)}+c_{n-1}X^{n-2}Z^{(n-1)^2}+\cdots+c_0X^{n(n-2)}
$$
$$
P(X,1,Z)=X^{n-1}Z^{n(n-2)}+\Phi(X,1,Z)
$$
\begin{multline*}
Q(X,1,Z)=XZ^{n(n-1)^2}+c_{n-1}X^{n-1}Z^{(n-1)(n^2-n-1)}+\cdots \\
+c_{n-i}X^{n-i}P^{i-1}Z^{(n-i)(n^2-n-1)}+ \cdots +c_0P^{n-1}
\end{multline*}
$$
F_t(X,1,Z)=\frac{Q(X,1,Z)}{\Phi(X,1,Z)}-tZ^{n(n-1)(n-2)+1}
$$
Consider the map $X\to XZ$. Then:

$\Phi(X,1,Z)=Z^{n(n-2)}\Phi^1(X,Z)$, where 
$$
\Phi^1(X,Z)= Z^{n}+c_{n-1}X^{n-2}Z^{n-1}+\cdots+c_0X^{n(n-2)};
$$

$P(X,1,Z)=Z^{n(n-2)}P^1(X,Z)$, where $P^1(X,Z)=X^{n-1}Z^{n-1}+\Phi^1(X,Z)$;

$Q(X,1,Z)=Z^{n(n-1)(n-2)}Q^1(X,Z)$, where
\begin{multline*}
Q^1(X,Z)= XZ^{n^2-n+1}+c_{n-1}X^{n-1}Z^{(n-1)n}+ \cdots \\
+c_{n-i}X^{n-i}(P^1)^{i-1}Z^{(n-i)n}+ \cdots+c_0(P^1)^{n-1} ;
\end{multline*}

$F_t(X,1,Z)=Z^{n(n-2)^2}F^1(X,Z)$, where $F^1(X,Z)=\frac{Q^1(X,Z)}{\Phi^1(X,Z)}-tZ^{n(n-2)+1}$.

Consider the map $Z \to ZX^{n-3}$. Then:

$\Phi^1(X,Z)=X^{n(n-3)}\Phi^2(X,Z)$, where $\Phi^2(X,Z)= Z^{n}+c_{n-1}XZ^{n-1}+\cdots+c_0X^{n}$;

$P^1(X,Z)=X^{n(n-3)}P^2(X,Z)$, where $P^2(X,Z)=X^2Z^{n-1}+\Phi^2(X,Z)$;

$Q^1(X,Z)=X^{n(n-1)(n-3)}Q^2(X,Z)$, where
\begin{multline*}
Q^2(X,Z)= X^{n-2}Z^{n^2-n+1}+c_{n-1}X^{n-1}Z^{(n-1)n}+ \cdots \\
+c_{n-i}X^{n-i}(P^2)^{i-1}Z^{(n-i)n}+ \cdots+c_0(P^2)^{n-1} ;
\end{multline*}

$F^1(X,Z)=X^{n(n-2)(n-3)}F^2(X,Z)$, where $F^2(X,Z)=\frac{Q^2(X,Z)}{\Phi^2(X,Z)}-tX^{n-3}Z^{n(n-2)+1}$.

Consider the map $X\to XZ$.  Then:

$\Phi^2(X,Z)=Z^n\tilde{\phi}(X)$;

$P^2(X,Z)=Z^nP^3(X,Z)$, where $P^3(X,Z)=X^2Z+\tilde{\phi}(X)$;

$Q^2(X,Z)=Z^{n(n-1)}Q^3(X,Z)$, where
\begin{multline*}
Q^3(X,Z)= X^{n-2}Z^{n-1}+c_{n-1}X^{n-1}Z^{n-1}+ \cdots \\
+c_{n-i}X^{n-i}(P^3)^{i-1}Z^{n-i}+ \cdots+c_0(P^3)^{n-1} ;
\end{multline*}

$F^2(X,Z)=Z^{n(n-2)}F^3(X,Z)$, where $F^3(X,Z)=\frac{Q^3(X,Z)}{\tilde{\phi}(X)}-tX^{n-3}Z^{n-2}$.

One can write
\begin{multline*}
F^3(X,Z)=c_0\tilde{\phi}(X)^{n-2}+Z\tilde{\phi}(X)^{n-3}A_1(X)+\cdots \\
+Z^{n-3}\tilde{\phi}(X)A_{n-3}(X)-tX^{n-3}Z^{n-2}+Z^{n-1}B(X)
\end{multline*}
This implies that, over the point $[0:1:0]$,
there are $m$ dicriticals of degree $n-2$ where $m$ is the number of distinct roots of $\phi$.
The Newton tree of $F_{CND}$ is in Figure 3. From that, or from the genus formula,
we see that $F_{CND}$ is a rational polynomial,
hence a field generator of $\bk[X,Y]$.
So assertion~\eqref{fo8dgs76m17w981dh912} is proved.
\begin{figure}[ht]
\begin{center}
\includegraphics{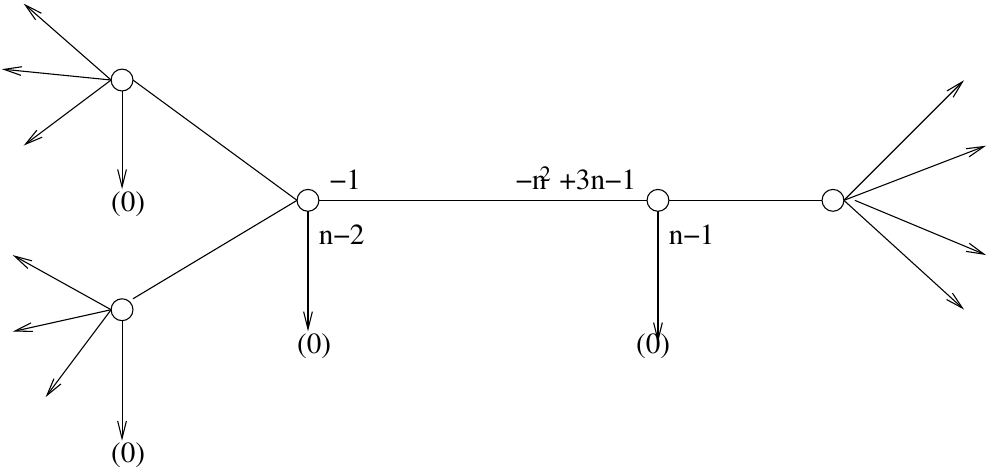}
\caption{}
\label {78r273wegadskjh278}
\end{center}
\end{figure}  
\end{example}

Until the end of this section, $\bk$ is an arbitrary field.

\begin{notation}  \label {f236te5rddsg}
Let $A = \kk2$.  Given $F \in A \setminus \bk$, we let
$\Galg(F,A)$ denote the set of prime ideals $\pgoth$ of $A$ such that
the composite $\bk[F] \hookrightarrow A \to A/\pgoth$ is an isomorphism.
We also let 
$\Gamma(F,A) = \setspec{ V(\pgoth) }{ \pgoth \in \Galg(F,A) }$,
i.e., $\Gamma(F,A)$ is the set of curves $C \subset \Spec A$ which have the property
that the composite $C \hookrightarrow \Spec A \to \Spec\bk[F]$ is an isomorphism.
Note that $\pgoth \mapsto V(\pgoth)$ is a bijection  $\Galg(F,A) \to \Gamma(F,A)$.
\end{notation}

We shall now study how field generators behave under birational extensions.

\begin{definition}  \label {xvscgdfhh82927475}
Let $\Phi : X \to Y$ be a morphism of nonsingular algebraic surfaces over $\bk$.
Assume that $\Phi$ is {\it birational}, i.e., that there exist nonempty Zariski-open subsets $U \subseteq X$
and $V \subseteq Y$ such that $\Phi$ restricts to an isomorphism $U\to V$.
By a {\it missing curve\/} of $\Phi$ we mean a curve $C \subset Y$
such that $C \cap \Phi(X)$ is a finite set of closed points.
A {\it contracting curve\/} of $\Phi$ is a curve $C \subset X$ such that $\Phi(C)$ is a point.
We write $\Miss(\Phi)$ and $\Cont(\Phi)$ for the sets of missing curves and contracting curves of $\Phi$, respectively.
Note that $\Miss(\Phi)$ and $\Cont(\Phi)$ are finite sets.
\end{definition}

\begin{notation}  \label {293832476hf}
Consider morphisms $\aff^2 \xrightarrow{ \Phi } \aff^2 \xrightarrow{ f } \aff^1$
where $\Phi$ is birational and $f$ is dominant.
Then we write
\begin{align*}
\Miss_{\text{\rm hor}}(\Phi,f)  & = \setspec{ C \in \Miss(\Phi) }{ \text{$f(C)$ is a dense subset of $\aff^1$} }.
\end{align*}
We refer to the elements of $\Miss_{\text{\rm hor}}(\Phi,f)$
as the ``$f$-horizontal'' missing curves of $\Phi$.
\end{notation}

Our next objective is to study how $\Delta(F,A)$ and $\Gamma(F,A)$ behave under a birational extension of $A$.
This is accomplished by \ref{difja;skdfj;aksd} and \ref{56fewf8r8t34kd223lwgfuio},
which are respectively results 2.9 and 3.11 of \cite{CassouDaigVGVBFG}.
See the introduction for the notation ``$A \preceq B$''.

\begin{proposition} \label {difja;skdfj;aksd}
Let $A \preceq B$ and $F \in A \setminus \bk$,
and consider the morphisms
\begin{equation*} \label  {8231h4g5hfhhcncbbxm}
\Spec B \xrightarrow{\Phi} \Spec A \xrightarrow{f} \Spec \bk[F]
\end{equation*}
determined by the inclusions $\bk[F] \hookrightarrow A  \hookrightarrow B$.
Let $C_1, \dots, C_h$ be the distinct elements of  $\Miss_{\text{\rm hor}}(\Phi,f)$ and,
for each $i \in \{ 1, \dots, h \}$,
let $\delta_i$ be the degree\footnote%
{Let $R \subseteq S$ be integral domains and $f : \Spec S \to \Spec R$ the corresponding morphism of schemes.
Assume that $\Frac S$ is a finite extension of $\Frac R$.  Then we define $\deg f = [\Frac S : \Frac R]$.}
of the morphism $f|_{C_i} : C_i \to \Spec\bk[F]$.
\begin{enumerata}

\item $\Delta(F,B) = \big[ \Delta(F,A), \delta_1, \dots, \delta_h \big]$,
i.e., $\Delta(F,B)$ is the concatenation of $\Delta(F,A)$ and $[\delta_1, \dots, \delta_h ]$.
In particular, $\dic(F,B) = \dic(F,A) + |\Miss_{\text{\rm hor}}(\Phi,f)|$.

\item For each $i \in \{1, \dots, h\}$, $\delta_i=1 \Leftrightarrow C_i \in \Gamma(F,A)$.

\end{enumerata}
\end{proposition}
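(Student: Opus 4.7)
The plan is to construct a degree-preserving bijection between $\bbV^\infty(F,B)\setminus\bbV^\infty(F,A)$ and $\Miss_{\text{\rm hor}}(\Phi,f)$, from which (a) follows by concatenation. The containment $\bbV^\infty(F,A)\subseteq\bbV^\infty(F,B)$ is automatic (if $A\nsubseteq R$ then $B\nsubseteq R$, since $A\subseteq B$), so the task is to describe the extras.

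A typical extra $R\in\bbV^\infty(F,B)\setminus\bbV^\infty(F,A)$ satisfies $A\subseteq R$, $B\nsubseteq R$, and is a DVR (since $\trdeg_{\bk(F)}K=1$). I would first argue that the center $\pgoth := A\cap\mgoth_R$ cannot be maximal: the injection $\bk(F)\hookrightarrow R/\mgoth_R$ would force the image of $F$ -- which lies in $A/\pgoth$ -- to be transcendental over $\bk$, impossible when $A/\pgoth$ is algebraic over $\bk$. So $\pgoth$ has height $1$, $R=A_\pgoth$ (no DVR of $K$ strictly contains $A_\pgoth$), and the condition $F\notin\pgoth$ (from $F\in R^*$) is $f$-horizontality of $C := V(\pgoth)$. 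The condition $B\nsubseteq A_\pgoth$ corresponds exactly to $C\in\Miss(\Phi)$: one direction is via $\qgoth := \mgoth_{A_\pgoth}\cap B$ (a prime of $B$ with $\qgoth\cap A=\pgoth$), while the other uses that any $\qgoth\in\Spec B$ with $\qgoth\cap A=\pgoth$ must itself be of height $1$ in $B$ (else the field $\Frac(A/\pgoth)$, of transcendence degree $1$ over $\bk$, would embed in the residue field $B/\qgoth$, of transcendence degree $0$), so $B_\qgoth$ is a DVR dominating $A_\pgoth$, forcing $B_\qgoth=A_\pgoth$ and $B\subseteq A_\pgoth$. The identification $R/\mgoth_R=\Frac(A/\pgoth)$ matches $[R/\mgoth_R:\bk(F)]$ with $\deg(f|_C)$, completing~(a).

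Part~(b) rests on the structural fact that every missing curve of a birational morphism between two copies of $\aff^2$ is isomorphic to $\aff^1_\bk$. The direction $C_i\in\Gamma(F,A)\Rightarrow\delta_i=1$ is immediate from $A/\pgoth_i=\bk[F]$. For the reverse, write $A/\pgoth_i\cong\kk1=\bk[t]$; then $\bk[F]\hookrightarrow\bk[t]$ with matching fraction fields forces $\deg_t F=1$ (elementary degree count, or L\"uroth), whence $\bk[F]=\bk[t]=A/\pgoth_i$ and $C_i\in\Gamma(F,A)$. The main obstacle is precisely this ``missing curve $\cong\aff^1$'' input -- a classical but nontrivial theorem about birational self-morphisms of $\aff^2$; the valuation-theoretic argument for~(a) is essentially formal by comparison.
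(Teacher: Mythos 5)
Your part (a) is sound: identifying the new dicriticals with the local rings $A_\pgoth$ at the generic points of the horizontal missing curves, proving both implications relating ``$B\nsubseteq A_\pgoth$'' to ``$V(\pgoth)\in\Miss(\Phi)$'', and matching $[R/\mgoth_R:\bk(F)]=[\Frac(A/\pgoth):\bk(F)]$ with the footnote's definition of $\deg(f|_C)$ is exactly the right mechanism, and the details (height of the center, $R=A_\pgoth$ because a DVR admits no intermediate valuation ring, height of $\qgoth$) check out. One small imprecision: $f$-horizontality of $V(\pgoth)$ is the condition $\pgoth\cap\bk[F]=(0)$, i.e.\ that \emph{every} nonzero element of $\bk[F]$ be a unit in $A_\pgoth$ (which is what $\bk(F)\subseteq A_\pgoth$ requires), not merely $F\notin\pgoth$.

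Part (b), however, rests on a false statement: missing curves of birational endomorphisms of $\aff^2$ need \emph{not} be isomorphic to $\aff^1$. Part~II of this paper gives a counterexample: for $f=c\circ\theta\circ c$ with $c(x,y)=(x,xy)$ and $\theta(x,y)=(x+y^2-1,y)$, the nodal cubic $y^2=x^2+x^3$ is a missing curve, and a singular curve is not isomorphic to $\aff^1$. The fact you actually need (from \cite{Dai:bir}) is weaker: every missing curve is a rational curve with one place at infinity, so its normalization is $\aff^1$ and its coordinate ring embeds birationally into $\bk[t]=\kk1$. Your degree count then survives when applied to the subring: if $\delta_i=1$ then $\bk(F|_{C_i})=\bk(t)$ with $F|_{C_i}\in A/\pgoth_i\subseteq\bk[t]$, forcing $F|_{C_i}=at+b$ with $a\neq0$, whence $\bk[t]=\bk[F|_{C_i}]\subseteq A/\pgoth_i\subseteq\bk[t]$ and $C_i\in\Gamma(F,A)$. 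So the gap is repairable, but some structural input of this kind is genuinely indispensable: for an arbitrary horizontal curve one can have degree $1$ without membership in $\Gamma(F,A)$ (e.g.\ $C=V(xy-1)$ in $\bk[x,y]$ with $F\equiv x$ on $C$ gives $A/\pgoth=\bk[x,x^{-1}]\supsetneq\bk[F]$), and your argument as written invokes the wrong structural fact.
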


\begin{lemma} 
  \label {56fewf8r8t34kd223lwgfuio}
Let $A \preceq B$ and $F \in A \setminus \bk$,
and consider the morphisms
\begin{equation*} 
\Spec B \xrightarrow{\Phi} \Spec A \xrightarrow{f} \Spec \bk[F]
\end{equation*}
determined by the inclusions $\bk[F] \hookrightarrow A  \hookrightarrow B$.
Then:
\begin{enumerata}

\item For each $C' \in \Gamma(F,B)$, $\Phi(C') \in \Gamma(F,A)$.

\item The set map $\gamma : \Gamma(F,B) \to \Gamma(F,A)$, $C' \mapsto \Phi(C')$, is injective,
and its image is the set of $C \in \Gamma(F,A)$
for which there exists
a curve $C' \subset \Spec B$ such that $\Phi|_{C'}$ is an isomorphism $C' \to C$.

\end{enumerata}
\end{lemma}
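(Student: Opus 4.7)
My plan is to translate both statements into ring-theoretic form, handling (a) and the image characterization in (b) by a short diagram chase, and injectivity of $\gamma$ by a localization argument at the height-one prime of $A$ lying under a given $\pgoth' \in \Galg(F,B)$.

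For (a), pick $\pgoth' \in \Galg(F,B)$ with $C' = V(\pgoth')$, and set $\pgoth = \pgoth' \cap A$. The inclusions $\bk[F] \hookrightarrow A \hookrightarrow B$ induce ring maps
\[
\bk[F] \xrightarrow{\alpha} A/\pgoth \xrightarrow{\beta} B/\pgoth',
\]
whose composite is an isomorphism (by the definition of $\Galg(F,B)$) and with $\beta$ injective (because $\pgoth = \pgoth' \cap A$). A short diagram chase forces $\beta$, and hence $\alpha$, to be isomorphisms. The isomorphism $\alpha$ places $\pgoth$ in $\Galg(F,A)$, so $C := V(\pgoth) \in \Gamma(F,A)$; the isomorphism $\beta$ translates into the statement that $\Phi|_{C'} \colon C' \to C$ is a scheme isomorphism. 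In particular $\Phi(C') = C \in \Gamma(F,A)$, which proves (a). The ``$\subseteq$'' direction of the image characterization in (b) is contained in this. For the reverse direction, given $C \in \Gamma(F,A)$ and a curve $C' \subset \Spec B$ with $\Phi|_{C'} \colon C' \to C$ an isomorphism, the natural map $C' \hookrightarrow \Spec B \to \Spec \bk[F]$ factors as $C' \xrightarrow{\Phi|_{C'}} C \to \Spec \bk[F]$, which is a composition of two isomorphisms; hence $C' \in \Gamma(F,B)$ and $\gamma(C') = C$.

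For injectivity of $\gamma$, suppose $\pgoth_1', \pgoth_2' \in \Galg(F,B)$ both satisfy $\pgoth_i' \cap A = \pgoth$ for $i = 1, 2$. Since $A/\pgoth \cong \bk[F]$ has Krull dimension $1$ and $A = \kk2$ is Cohen--Macaulay of dimension $2$, the prime $\pgoth$ has height one, so $A_\pgoth$ is a DVR. The localization $B_\pgoth := (A \setminus \pgoth)^{-1} B$ is an $A_\pgoth$-subalgebra of $K = \Frac A = \Frac B$; and the elementary overring lemma for DVRs tells us that every subring of $K$ containing $A_\pgoth$ equals either $A_\pgoth$ or $K$. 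The option $B_\pgoth = K$ is excluded because it would force $\pgoth_i' B_\pgoth = K$, hence $\pgoth_i' \cap (A \setminus \pgoth) \neq \emptyset$, contradicting $\pgoth_i' \cap A = \pgoth$. So $B_\pgoth = A_\pgoth$, and the primes of $B$ lying over $\pgoth$ correspond bijectively to the unique prime of $A_\pgoth$ lying over $\pgoth A_\pgoth$. Thus $\pgoth_1' = \pgoth_2'$, completing the proof.

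The only delicate step is the reduction of injectivity to the DVR overring fact; everything else is a routine diagram chase, and the argument requires no geometric information about the specific structure of birational endomorphisms of $\aff^2$.
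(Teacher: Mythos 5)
Your proof is correct. Note that the survey does not actually prove this lemma --- it is quoted from \cite[3.11]{CassouDaigVGVBFG} --- so there is no in-paper argument to compare against; I can only certify the proposal on its own terms. Part (a) and the image characterization reduce cleanly to the diagram chase $\bk[F]\xrightarrow{\alpha}A/\pgoth\xrightarrow{\beta}B/\pgoth'$ with $\beta$ injective and $\beta\circ\alpha$ bijective, which indeed forces both maps to be isomorphisms, and the identification of $\Phi(C')$ with $V(\pgoth)$ follows since the induced map $\Spec(B/\pgoth')\to\Spec(A/\pgoth)$ is then surjective. For injectivity, your reduction to the overring lemma is sound: $\pgoth=\pgoth'\cap A$ has height one because $\dim A/\pgoth=1$ and the dimension formula holds for affine domains (that, rather than Cohen--Macaulayness, is the honest justification), $A_\pgoth$ is a DVR since $A$ is regular, the case $B_\pgoth=K$ is correctly excluded via $1\in\pgoth_i'B_\pgoth$ producing an element of $\pgoth_i'\cap(A\setminus\pgoth)$, and $B_\pgoth=A_\pgoth$ leaves a unique prime of $B$ contracting to $\pgoth$. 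The argument is purely commutative-algebraic and uses nothing about field generators, birational geometry of surfaces, or the hypothesis that $\bk$ be algebraically closed, which is consistent with the generality in which the lemma is stated.
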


\section{The cardinality of $\Gamma(F,A)$ for field generators}
\label {SEC:CardGammaFG}

Proposition~\ref{difja;skdfj;aksd} shows the importance of $\Gamma(F,A)$ for field generators.
We will see two important features of that set: if $F$ is a field generator of $A=\kk2$ then
(i) for a suitable $(X,Y) \in \Cgoth(A)$, all elements of $\Gamma(F,A)$ are lines ``$X=$ constant'' or ``$Y=$ constant'';
(ii) except for a very special case, the cardinality of $\Gamma(F,A)$ is at most $2$.

\begin{definition} \label {90ojhgdew33689jhhf}
Let $A = \kk2$.
\begin{enumerate}

\item Given $F \in A$ and a pair $\gamma = (X,Y)$ such that $A = \bk[X,Y]$,
write $F = \sum_{i,j} a_{ij} X^iY^j$ where $a_{ij} \in \bk$ for all $i,j$;
then $\supp_\gamma(F) = \setspec{(i,j) \in \Nat^2}{ a_{ij} \neq 0 }$
is called the {\it support of $F$ with respect to $\gamma$}.

\item Given a subset $S$ of $\Reals^2$, let $\langle S \rangle$ denote its convex hull.

\item \label {3rfhbh8c98b82yh45ft6}
Given $F \in A$, we write $\Rec(F,A)$ for the set of ordered pairs $\gamma = (X,Y)$ satisfying 
$A = \bk[X,Y]$ and
$$
\text{there exist $m,n \ge 1$ such that 
$(m,n) \in \supp_\gamma(F) \subseteq \langle (0,0),(m,0),(0,n),(m,n) \rangle$.}
$$
Let $\Rec^+(F,A)$ be the set of $\gamma=(X,Y) \in \Rec(F,A)$ satisfying
the additional condition ``$m\le n$''.
Clearly, 
$$
\Rec^+(F,A) \neq \emptyset \Leftrightarrow \Rec(F,A) \neq \emptyset.
$$

\item By a {\it rectangular element of $A$} we mean an $F\in A$ satisfying $\Rec(F,A) \neq \emptyset$.

\end{enumerate}
\end{definition}

See \ref{982393298urcnj092}, below,
to understand why the notion of rectangular element is relevant for studying field generators.

\begin{lemma}[{\cite[3.4]{CassouDaigVGVBFG}}]  \label {0cv9v8n8dllrklslk52kf0}
Let $F$ be a rectangular element of $A = \kk2$.
\begin{enumerata}

\item If $(X,Y) \in \Rec(F,A)$ then 
\begin{multline*}
\Rec(F,A) = \setspec{ (aX+b, cY+d) }{ a,b,c,d \in \bk,\ ac\neq0 } \\
\cup \setspec{ (cY+d, aX+b) }{ a,b,c,d \in \bk,\ ac\neq0 }.
\end{multline*}

\item \label {9834fh87td4rth}
Up to order, the pair $(m,n)$ in \ref{90ojhgdew33689jhhf}\eqref{3rfhbh8c98b82yh45ft6} depends only on $(F,A)$,
i.e., is independent of the choice of $\gamma \in \Rec(F,A)$.

\end{enumerata}
\end{lemma}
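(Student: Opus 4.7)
The plan is to analyze the top-total-degree component of $F$ in any rectangular coordinate system. For $\gamma=(X,Y)\in\Rec(F,A)$ with corner $(m,n)$, the point $(m,n)$ is the unique point of the support maximizing $i+j$, so the top-total-degree component of $F$ in coordinates $\gamma$ is the monomial $a_{mn}X^mY^n$ with $a_{mn}\neq 0$ and $m,n\ge 1$, of total degree $m+n$. The inclusion ``$\supseteq$'' in (a) is then a direct expansion: under the affine substitution $(X,Y)\mapsto(aX+b,cY+d)$ with $ac\neq 0$, each monomial $X^iY^j$ rewrites with support inside $[0,i]\times[0,j]$, so $\supp$ stays inside the same rectangle and the corner coefficient becomes $a_{mn}/(a^mc^n)\neq 0$; the swap variant $(X,Y)\mapsto(cY+d,aX+b)$ reflects the rectangle to one with corner $(n,m)$.

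For the reverse inclusion, let $\gamma_1,\gamma_2\in\Rec(F,A)$ have corners $(m_1,n_1)$ and $(m_2,n_2)$, and let $(P_1,P_2)\in\Aut_\bk \bk[S,T]$ be the automorphism with $X_2=P_1(X_1,Y_1)$ and $Y_2=P_2(X_1,Y_1)$. I first show $(P_1,P_2)$ is affine. Otherwise, by a well-known consequence of the Jung--van der Kulk theorem (obtainable, for instance, by classifying variables in $\bk[S,T]$ and applying the Jacobian criterion to their leading forms), $\bar P_1$ and $\bar P_2$ are scalar multiples of powers of a single linear form $L\in\bk[S,T]$. Writing $f_i(S,T)$ for $F$ expressed in coordinates $\gamma_i$ and setting $d_i=\deg P_i\ge 1$, the function $(i,j)\mapsto id_1+jd_2$ is strictly maximized on the rectangular support of $f_2$ at $(m_2,n_2)$ alone (since $m_2,n_2\ge 1$), so the top-total-degree component of $f_1=f_2\circ(P_1,P_2)$ is $c_{m_2n_2}\bar P_1^{m_2}\bar P_2^{n_2}$, a nonzero scalar multiple of a power of $L$. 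But this component must also equal $a_{m_1n_1}S^{m_1}T^{n_1}$ with $m_1,n_1\ge 1$, whose irreducible factorization in the UFD $\bk[S,T]$ involves the two distinct primes $S$ and $T$, while a power of a single linear form involves only one prime --- contradiction.

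Hence $(P_1,P_2)$ is affine. Equating top-total-degree components then gives $a_{m_1n_1}S^{m_1}T^{n_1}=c_{m_2n_2}\bar P_1^{m_2}\bar P_2^{n_2}$ with $\bar P_1,\bar P_2$ linearly independent linear forms; by the same UFD argument each $\bar P_i$ must be a scalar multiple of $S$ or of $T$, and linear independence leaves exactly $(\bar P_1,\bar P_2)\in\{(\alpha S,\delta T),(\alpha T,\delta S)\}$. Absorbing the constant terms of $P_1,P_2$ yields the two families in the stated union, and exponent comparison forces $(m_2,n_2)=(m_1,n_1)$ or $(m_2,n_2)=(n_1,m_1)$ in the respective cases. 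Part (b) is then immediate: any $\gamma'\in\Rec(F,A)$ is obtained from a fixed $\gamma$ either by a diagonal affine change (preserving $(m,n)$) or by a diagonal affine change followed by the swap (transposing to $(n,m)$), so the unordered pair $\{m,n\}$ depends only on $(F,A)$. The main obstacle is the structural input about leading forms of non-affine automorphisms of $\bk[S,T]$; once this is granted, everything else is a short UFD computation in $\bk[S,T]$.
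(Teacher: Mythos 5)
The survey states this lemma only as a citation of \cite[3.4]{CassouDaigVGVBFG} and contains no proof of it, so there is nothing in the present text to compare against; judged on its own, your argument is correct and is the natural one: since $(m,n)$ is the unique maximizer of $i+j$ on the rectangle, the leading form of $F$ in any rectangular coordinate system is the single monomial $a_{mn}X^mY^n$, and the structure theory of plane automorphisms then forces the coordinate change to be affine and diagonal or antidiagonal, which gives both (a) and (b). The one point to watch is your parenthetical justification of the structural input: deducing that the leading forms of a non-affine automorphism of $\bk[S,T]$ are powers of a common linear form via the Jacobian criterion is problematic in positive characteristic (the Jacobian of $L_1^{d_1}$ and $L_2^{d_2}$ vanishes whenever $p$ divides $d_1d_2$), and the lemma is asserted for an arbitrary field $\bk$. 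The fact itself does hold over any field as a direct consequence of the amalgamated-product form of the Jung--van der Kulk theorem, so your proof stands once that standard reference is substituted for the Jacobian argument.
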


\begin{definition}  \label {89798xchh0c09x9cww9}
For each rectangular element $F$ of $A = \kk2$ we define 
$$
\text{$\bideg_A(F) = (\deg_X(F), \deg_Y(F))$ for any $(X,Y) \in \Rec^+(F,A)$.}
$$
By \ref{0cv9v8n8dllrklslk52kf0}, $\bideg_A(F)$ is well defined and depends only on $(F,A)$.\\
Moreover, if $(m,n) = \bideg_A(F)$ and $\gamma \in \Rec^+(F,A)$ then
$$
\text{$1 \le m \le n$\ \ and\ \  $(m,n) \in \supp_\gamma(F) \subseteq \langle (0,0),(m,0),(0,n),(m,n) \rangle$.}
$$
\end{definition}

\begin{remark}
Let $F$ be a rectangular element of $A = \kk2$ and let $(m,n) = \bideg_A(F)$.
It follows from \ref{0cv9v8n8dllrklslk52kf0}\eqref{9834fh87td4rth} that if $m=n$ then
$\Rec^+(F,A) = \Rec(F,A)$.
\end{remark}

We shall now consider the set $\Galg(F,A)$ defined in \ref{f236te5rddsg}.
By the next fact, $\Galg(F,A)$ is  easy to describe when $F$ is a rectangular element of~$A$.

\begin{lemma}[{\cite[3.7]{CassouDaigVGVBFG}}]  \label {09239023r02n02b27c2c8h2}
Let $F$ be a rectangular element of $A = \kk2$, $\gamma = (X,Y) \in \Rec(F,A)$ and
$(m,n) = ( \deg_X(F), \deg_Y(F) )$. Recall that
$$
(m,n) \in \supp_\gamma(F) \subseteq \langle (0,0),(m,0),(0,n),(m,n) \rangle.
$$
Write $F = \sum_{i,j} a_{ij} X^iY^j$ ($a_{ij} \in \bk$) and define
$$
\textstyle
F_\text{\rm ver}(Y) = \sum_{j=0}^n a_{m,j}Y^j \quad \text{and} \quad
F_\text{\rm hor}(X) = \sum_{i=0}^m a_{i,n}X^i .
$$
\begin{enumerata}

\item \label {09293bbgxqweiby7sznxadfefj}
$\Galg(F,A)$ is equal to
$$
\setspec{ (X-a) }{ a\in\bk \text{ and } \deg F(a,Y)=1 }
\cup \setspec{ (Y-b) }{ b\in\bk \text{ and } \deg F(X,b)=1 }.
$$

\item \label {09dsfjk3346gbxbcdzrew24} If $\min(m,n)>1$ then $\Galg(F,A)$ is included in
$$
\setspec{ (X-a) }{ a\in\bk \text{ and } F_\text{\rm hor}(a)=0 }
\cup \setspec{ (Y-b) }{ b\in\bk \text{ and } F_\text{\rm ver}(b)=0 }.
$$

\end{enumerata}
\end{lemma}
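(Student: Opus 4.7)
The plan is to reduce both parts to a single observation: whenever $F|_{V(g)} : V(g) \to \aff^1$ is an isomorphism for some irreducible $g \in A$, there will exist polynomials $\phi,\psi \in \bk[t] = \kk1$ such that $(\phi,\psi) : \aff^1 \to V(g)$ is an isomorphism and $F(\phi(t),\psi(t)) = t$. Part~(a) then reduces to a \emph{monomial-domination} argument forcing $\min(\deg\phi,\deg\psi) = 0$, and part~(b) drops out from a trivial degree inspection.

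For part~(a), I first dispose of the reverse inclusion: if $\deg F(a,Y) = 1$, then $A/(X-a) = \bk[Y]$ and $F$ maps to the variable $F(a,Y)$, so $\bk[F] \to A/(X-a)$ is an isomorphism (symmetrically for $(Y-b)$). For the forward inclusion, let $\pgoth \in \Galg(F,A)$. Since $A/\pgoth \cong \bk^{[1]}$, $\pgoth$ is of height one, so $\pgoth = (g)$ for an irreducible $g \in A$, and $F|_{V(g)}$ is an isomorphism $V(g) \to \aff^1$; let $(\phi,\psi)$ be its inverse and set $e = \deg\phi$, $f = \deg\psi$. I will argue $\min(e,f) = 0$ by contradiction. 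Assume $e,f \geq 1$. For each $(i,j) \in \supp_\gamma(F)$, the term $a_{ij}\phi^i\psi^j \in \bk[t]$ has $t$-degree exactly $ei+fj$, and the linear function $(i,j) \mapsto ei+fj$ has strictly positive slope in both coordinates; hence its maximum over the rectangle $\langle (0,0),(m,0),(0,n),(m,n)\rangle$ is attained uniquely at $(m,n)$. As $a_{mn} \neq 0$, the coefficient of $t^{em+fn}$ in $F(\phi(t),\psi(t))$ equals $a_{mn}(\mathrm{lc}\,\phi)^m(\mathrm{lc}\,\psi)^n \neq 0$, so $\deg_t F(\phi,\psi) = em+fn \geq m+n \geq 2$, contradicting $F(\phi(t),\psi(t)) = t$. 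Hence $e = 0$ or $f = 0$. If $e = 0$, then $\phi$ is a constant $a$, so $V(g) \subseteq \{X = a\}$; as $g$ and $X-a$ are both irreducible and $g \mid X-a$, we get $(g) = (X-a)$, and the isomorphism condition forces $\deg F(a,Y) = 1$. The case $f = 0$ is symmetric.

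Part~(b) is then immediate. Assume $\min(m,n) > 1$ and let $\pgoth \in \Galg(F,A)$. By~(a), up to symmetry $\pgoth = (X-a)$ with $\deg F(a,Y) = 1$. The coefficient of $Y^n$ in $F(a,Y) = \sum_j\bigl(\sum_i a_{ij}a^i\bigr)Y^j$ is $F_\text{\rm hor}(a)$, and since $n \geq 2$ this coefficient must vanish, yielding $F_\text{\rm hor}(a) = 0$. Symmetrically, $\pgoth = (Y-b)$ gives $F_\text{\rm ver}(b) = 0$.

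\emph{The main obstacle} is the monomial-domination step in (a): it is precisely here that the rectangular shape of $\supp_\gamma(F)$ together with the requirement $(m,n) \in \supp_\gamma(F)$ is indispensable. Without these, the leading terms of $F(\phi(t),\psi(t))$ could cancel and the contradiction would collapse. Everything else---the height-one reduction, the parametrization of $V(g)$, and the coefficient extraction in (b)---is straightforward bookkeeping once this geometric step is in place.
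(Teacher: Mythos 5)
The paper does not prove this lemma here (it is quoted from \cite[3.7]{CassouDaigVGVBFG}), so there is nothing internal to compare against; judged on its own, your proof is correct. The reduction to a principal height-one prime, the parametrization $F(\phi(t),\psi(t))=t$, and the observation that for $\deg\phi,\deg\psi\ge 1$ the linear form $ei+fj$ is uniquely maximized at $(m,n)$ on the rectangle containing $\supp_\gamma(F)$ (so that $\deg_t F(\phi,\psi)=em+fn\ge 2$, a contradiction) is exactly the argument one expects for this statement, and part~(b) follows from part~(a) by the coefficient extraction you describe.
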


The next result (\ref{982393298urcnj092}) is due to Russell,
and has proved to be very useful in the study of field generators.
Here, one should observe that no variable of $A=\kk2$ is a rectangular element of $A$,
because any rectangular element has two points at infinity. 

\begin{theorem}[{\cite[3.7 and 4.5]{Rus:fg}}]  \label {982393298urcnj092}
If $F$ is a field generator of $A=\kk2$ which is not a variable of $A$, then $F$ is a rectangular element of $A$.
\end{theorem}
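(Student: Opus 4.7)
My plan is to prove Theorem \ref{982393298urcnj092} by minimizing $\deg_\gamma F$ over coordinate systems and then using the field generator hypothesis to constrain the leading form of $F$.

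Choose $\gamma = (X,Y) \in \Cgoth(A)$ with $\deg_\gamma F = \deg_A F$, and set $d = \deg_\gamma F$. Let $F_d(X,Y)$ be the degree-$d$ homogeneous component of $F$. Since $F$ is not a variable of $A$, $d \geq 2$. The first subgoal is to show that $F_d$, as a binary form over $\bar\bk$, has at least two distinct linear factors. Assume instead that $F_d = c\, L^d$ for a single linear form $L$; a linear change of coordinates brings $L$ to $X$, so $F = cX^d + R$ with $\deg R < d$, and $F$ has a unique point at infinity $[0{:}1{:}0]$ in $\proj^2$. An Abhyankar--Moh--Suzuki-style argument --- iterated triangular substitutions $X \mapsto X - p(Y)$ --- either strictly reduces $\deg_\gamma F$, contradicting minimality, or terminates with $F \in \bk[X']$ for some variable $X'$ of $A$. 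In the latter case, $d = [\bk(X'):\bk(F)] \geq 2$ makes $\bk(X')$ a nontrivial algebraic subextension of $\Frac A$ over $\bk(F)$; L\"uroth then forbids $\Frac A$ from being purely transcendental of degree $1$ over $\bk(F)$, contradicting the field generator hypothesis.

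Knowing $F_d$ has at least two distinct linear factors, a further linear change of coordinates makes both $X$ and $Y$ divide $F_d$, so we can write
\begin{equation*}
F_d = c\, X^m Y^n H(X,Y), \qquad m, n \geq 1,\ c \in \bk^*,\ H \text{ homogeneous and coprime to } XY.
\end{equation*}
Granting that $H$ is constant and that every monomial of $F$ lies in the rectangle $\langle (0,0),(m,0),(0,n),(m,n)\rangle$, the monomial $X^m Y^n$ appears in $F$ with coefficient $c \neq 0$, and the other monomials all satisfy $i \leq m$ and $j \leq n$; hence $(m,n) \in \supp_\gamma F \subseteq \langle (0,0),(m,0),(0,n),(m,n)\rangle$, which is exactly the rectangularity condition.

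It remains to establish the two claims about $H$ and the support, and this is the main obstacle. One compactifies $\aff^2 \subset \proj^2$ via $\gamma$ and resolves the base points of the linear system $\Lambda(F)$ to produce a diagram as in \eqref{dkfjasodfla2}, to which Lemma \ref{hf1g72wg1761878d} applies. A nonconstant factor of $H$ contributes an additional point at infinity beyond $[1{:}0{:}0]$ and $[0{:}1{:}0]$; a monomial of $F$ lying outside the rectangle complicates the local geometry at one of those two points, introducing extra dicritical branches during the resolution. In either case $\Delta(F,A) = [d_1,\dots,d_t]$ acquires additional entries. Combining Corollary \ref{2384y872tqo8we1ftbs} (which forces $\gcd(d_1,\dots,d_t) = 1$) with Lemma \ref{828dbvcdqscd51671w72} ($\deg_A F \geq \sum d_i$) and a careful combinatorial analysis of the resolution tree, one produces a new coordinate system $\gamma'$ with $\deg_{\gamma'} F < d$, violating the minimality of $d$. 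The hard part of this proof is precisely converting the global algebraic hypothesis ``$F$ is a field generator'' into sharp enough local numerical constraints at infinity to force both $H$ to be trivial and $\supp_\gamma F$ to fit in the rectangle; this is the content of \cite[3.7, 4.5]{Rus:fg}.
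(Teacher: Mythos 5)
The paper does not prove Theorem \ref{982393298urcnj092} at all: it is quoted verbatim from Russell's paper (results 3.7 and 4.5 of \cite{Rus:fg}), so your proposal has to stand on its own as a proof. It does not. You correctly isolate the two assertions that constitute the theorem --- that in a suitable coordinate system the leading form of $F$ is a monomial $cX^mY^n$ with $m,n\ge 1$ (i.e.\ $H$ is constant), and that $\supp_\gamma F$ lies in the rectangle $\langle(0,0),(m,0),(0,n),(m,n)\rangle$ --- but you then write ``Granting that $H$ is constant and that every monomial of $F$ lies in the rectangle\dots'' and, in the final paragraph, concede that establishing these claims ``is the main obstacle'' before gesturing at ``a careful combinatorial analysis of the resolution tree'' and deferring to \cite[3.7, 4.5]{Rus:fg}. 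That is the entire content of the theorem; what you have written is a plan, not a proof.

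There is also a concrete error in the one step you do argue. To rule out $F_d=cL^d$ you assert that iterated triangular substitutions either strictly lower $\deg_\gamma F$ or terminate with $F\in\bk[X']$. This dichotomy is false for polynomials with one point at infinity: for $F=X^2+Y^3$ no substitution $X\mapsto X-p(Y)$ or $Y\mapsto Y-q(X)$ lowers the degree, and $F$ never becomes univariate. What you actually need here is the nontrivial theorem (Russell, resting on Abhyankar--Moh--Suzuki in characteristic zero and requiring separate work in positive characteristic) that a field generator with a single place at infinity is a variable; the field generator hypothesis must enter \emph{before} the dichotomy, not only in its terminal branch. Your Lüroth-type argument in the terminal case is fine ($\bk(F)$ is algebraically closed in $\bk(F)(G)$, so $\bk(X')=\bk(F)$), but it is applied to a case you have not shown is the only alternative. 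Finally, note that a degree-minimizing coordinate system need not belong to $\Rec(F,A)$ (e.g.\ $F=XY$ written in the coordinates $(X+Y,\,Y)$ still has degree $2$ but a non-monomial leading form), so even the framing ``minimize the degree, then read off rectangularity'' requires an additional normalization argument that is absent.
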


We now turn our attention to the cardinality of $\Gamma(F,A)$ where $F$ is a field generator of $A=\kk2$.
Note that there is no upper bound on $| \Gamma(F,A) |$
for rectangular elements $F$ of $A$ and even for certain types of field generators:

\begin{examples}  \label {8923ry7fhuh}
Let $A = \bk[X,Y]=\kk2$.
\begin{enumerata}

\item Let $F = u(Y)X^2 + X$, where $\deg u(Y) > 1$;
then $F$ is a rectangular element of $A$ and \ref{09239023r02n02b27c2c8h2} implies that $|\Gamma(F,A)|$ equals
the number of roots of $u(Y)$.

\item \label {923hvmnbwkuwq5}
Assume that $\bk$ is infinite and let $F = \alpha(Y)X + \beta(Y)$
where $\deg\beta(Y) \le \deg \alpha(Y) > 0$;
then \ref{09239023r02n02b27c2c8h2} implies that $|\Gamma(F,A)| = |\bk|$.
Since $\bk(F,Y)=\bk(X,Y)$,
$F$ is a good field generator of $A$ (of an especially simple type).

\end{enumerata}
\end{examples}

\begin{theorem}[{\cite[4.11]{CassouDaigVGVBFG}}]  \label {9854dnc2mrhvfc}
Let $F$ be a field generator of $A=\kk2$ satisfying $| \Gamma(F,A) | > 2$.
Then there exists $(X,Y)$ such that $A = \bk[X,Y]$ and $F = \alpha(Y)X+\beta(Y)$
for some $\alpha(Y), \beta(Y) \in \bk[Y]$.
\end{theorem}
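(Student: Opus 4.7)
The plan is to reduce to the rectangular case via Russell's theorem, dispose of the case of minimal bidegree directly, and derive a contradiction in the remaining case by analyzing the dicriticals at infinity.

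If $F$ is a variable of $A$, I pick $(X,Y) \in \Cgoth(A)$ with $F = X$; then $F = 1\cdot X + 0$ is of the desired form. Otherwise, by Russell's Theorem \ref{982393298urcnj092}, $F$ is a rectangular element of $A$. Fix $(X,Y) \in \Rec^+(F,A)$ and set $(m,n) = \bideg_A(F)$ with $1 \le m \le n$. If $m = 1$ then $\deg_X F = 1$ in these coordinates, giving $F = \alpha(Y)X + \beta(Y)$ for some $\alpha,\beta \in \bk[Y]$, as required. Henceforth I assume $m \ge 2$ (so also $n \ge 2$), and aim to prove $|\Gamma(F,A)| \le 2$, contradicting the hypothesis.

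Set $V = \{a \in \bk : (X-a) \in \Galg(F,A)\}$ and $H = \{b \in \bk : (Y-b) \in \Galg(F,A)\}$. By Lemma \ref{09239023r02n02b27c2c8h2}, $|\Gamma(F,A)| = |V|+|H|$; moreover, $V$ is contained in the roots of $F_\text{\rm hor}$ (of degree $m$) and $H$ in the roots of $F_\text{\rm ver}$ (of degree $n$). Expanding $F(X,Y) = \sum_{j=0}^n F_j(X) Y^j$, the condition $a \in V$ amounts to $F_j(a) = 0$ for all $j \ge 2$ together with $F_1(a) \ne 0$; hence $\prod_{a \in V}(X-a)$ divides each $F_j$ with $j \ge 2$, in particular divides $F_\text{\rm hor} = F_n$ of degree $m$. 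So $|V| \le m$, and symmetrically $|H| \le n$. These combinatorial bounds alone do not rule out $|V|+|H| \ge 3$ when $m,n \ge 2$; the rest of the argument must use the field-generator hypothesis.

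For the contradiction I would analyze the pencil $\Lambda(F)$ on $\proj^2$ (the compactification of $\aff^2 = \Spec\bk[X,Y]$). This pencil has only two base points on the line at infinity, namely $[0:1:0]$ (of multiplicity $m$) and $[1:0:0]$ (of multiplicity $n$); resolving them yields a diagram as in \eqref{dkfjasodfla2}, and the dicriticals arising above these two points account for all of $\Delta(F,A)$. The technical key is to show that each $a \in V$ gives rise to a dicritical of degree $1$ over $[0:1:0]$, and each $b \in H$ to one of degree $1$ over $[1:0:0]$; this requires a direct local computation at each infinitely-near base point, where the hypothesis $\deg F(a,Y)=1$ (resp.\ $\deg F(X,b)=1$) forces the local equation of the pencil to become linear in one local coordinate after an appropriate sequence of blowups, producing a degree-$1$ dicritical.

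The main obstacle is then to extract a numerical contradiction from the presence of at least three degree-$1$ dicriticals. Corollary \ref{2384y872tqo8we1ftbs} is automatically satisfied once a single degree-$1$ dicritical is present, so a finer argument is needed: I would pick two distinct elements $C,C' \in \Gamma(F,A)$ and use them, via Lemma \ref{56fewf8r8t34kd223lwgfuio} and Proposition \ref{difja;skdfj;aksd}, to build a birational extension $A \preceq B$ (with $B \ne A$) in which the new dicritical profile $\Delta(F,B)$ is incompatible with the rectangular bidegree $(m,n)$ of $(F,A)$---for instance, forcing a degree of a horizontal missing curve that overshoots the total $m+n$ budget, or forcing $F$ to become a variable of $B$ while $m \ge 2$ in $A$. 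The careful local-analytic verification of the degree-$1$ dicritical claim, together with the explicit construction of $B$ and the bookkeeping of $\Miss_{\text{\rm hor}}(\Phi,f)$ degrees under Proposition \ref{difja;skdfj;aksd}, is the technical heart of the proof and the principal difficulty.
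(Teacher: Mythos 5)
Your reduction is sound as far as it goes: the variable case is trivial, Russell's Theorem~\ref{982393298urcnj092} makes $F$ rectangular otherwise, and if $\bideg_A(F)=(m,n)$ with $m=1$ you are done. So the theorem is equivalent to the assertion that $m\ge 2$ forces $|\Gamma(F,A)|\le 2$, and your bounds $|V|\le m$, $|H|\le n$ via Lemma~\ref{09239023r02n02b27c2c8h2} are correct but, as you yourself note, insufficient. The problem is that everything after that point is a plan rather than a proof, and the plan contains a conceptual error. The elements of $\Gamma(F,A)$ are affine sections of $f:\Spec A\to\Spec\bk[F]$; they are not dicriticals of $(F,A)$ and do not ``give rise to degree-$1$ dicriticals over $[0:1:0]$'' in the resolution of $\Lambda(F)$. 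A section $C$ only becomes a degree-$1$ dicritical after you pass to some $A\preceq B$ for which $C$ is a missing curve (Proposition~\ref{difja;skdfj;aksd}(b)); for the pair $(F,A)$ itself the closure of $C$ simply exits through the fiber of $\bar f$ over $\infty$, which need not meet a dicritical at all (compare $F=X$: infinitely many sections, one dicritical). Even granting the conversion of three sections into three degree-$1$ dicriticals of some $\Delta(F,B)$ (e.g.\ via $B=\bk[X,Y/\prod_i(X-a_i)]$ when all three are vertical), nothing contradictory follows: Corollary~\ref{2384y872tqo8we1ftbs} is automatic, Lemma~\ref{828dbvcdqscd51671w72} bounds $\deg_B(F)$ from below while Lemma~\ref{of82098u12bw9} only says $\deg_B(F)>\deg_A(F)=m+n$, and three $1$'s in $\Delta(F,B)$ merely make $F$ very good in $B$ by~\ref{q23328r83yd74r9128}(b). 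So no ``budget'' is overshot.

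In short, the entire content of the theorem — why $m,n\ge 2$ is incompatible with three sections for a \emph{field generator} — is exactly the step you defer, and the two candidate mechanisms you propose for it do not close. (The survey does not reprove this statement; it quotes it from \cite[4.11]{CassouDaigVGVBFG}, where the argument is a genuinely finer analysis of the resolved pencil than the numerology you invoke here.) As it stands the proposal is an honest outline of the easy reductions plus an acknowledged hole where the theorem actually lives.
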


The above theorem is one of the main results of \cite{CassouDaigVGVBFG}.
Its corollary (below) has interesting consequences in the classification of field generators
(for instance, \ref{3q939xuh23829x9mh812o}\eqref{x23c3x5v4c5bv6nbm7nki0p} is needed
in the proof of \ref{q23328r83yd74r9128}\eqref{b-734tc8rgj3rfgh}).

\begin{corollary}[{\cite[4.12]{CassouDaigVGVBFG}}]    \label {3q939xuh23829x9mh812o} 
If  $F$ is a bad field generator of $A = \kk2$ then the following hold.
\begin{enumerata}

\item \label {x23c3x5v4c5bv6nbm7nki0p} $| \Gamma(F,A) | \le 2$ 

\item \label {xvbvn5v5n7bm8m68bn43v53c}
$\Rec(F,A) \neq \emptyset$ and the pair $(m,n) = \bideg_A(F)$ satisfies $2 \le m \le n$.

\item \label {c-84397bcfdv563} There exists $(X,Y) \in \Rec(F,A)$ such that
$$
\Galg(F,A) \subseteq \{ (X), (Y) \} \quad \text{or} \quad \Galg(F,A) \subseteq \{ (X), (X-1) \}.
$$

\end{enumerata}
\end{corollary}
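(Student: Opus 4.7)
The three parts are proved in order, each one feeding into the next, and essentially no new ingredients beyond the preceding results are needed. The tools are Theorem~\ref{9854dnc2mrhvfc} (the classification of $F$ with $|\Gamma(F,A)|>2$), Russell's rectangularity theorem~\ref{982393298urcnj092}, and Lemmas~\ref{0cv9v8n8dllrklslk52kf0} and~\ref{09239023r02n02b27c2c8h2}.

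For (a), I argue by contradiction. Suppose $|\Gamma(F,A)|>2$. Theorem~\ref{9854dnc2mrhvfc} supplies $(X,Y)\in\Cgoth(A)$ such that $F=\alpha(Y)X+\beta(Y)$ with $\alpha(Y),\beta(Y)\in\bk[Y]$. We must have $\alpha(Y)\ne 0$, since otherwise $F\in\bk[Y]$ is not even a field generator. Solving for $X$ yields $\bk(F,Y)=\bk(X,Y)=\Frac A$, so $F$ is a good field generator of $A$ (witness $Y\in A$), contradicting badness. Hence $|\Gamma(F,A)|\le 2$.

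For (b), note first that $F$ is not a variable of $A$: if $A=\bk[F,Y]$ for some $Y$, then $\bk(F,Y)=\Frac A$, so $F$ is good. Russell's theorem~\ref{982393298urcnj092} then gives $\Rec(F,A)\ne\emptyset$, so by Definition~\ref{89798xchh0c09x9cww9} the pair $\bideg_A(F)=(m,n)$ is defined with $1\le m\le n$. If $m=1$, any $(X,Y)\in\Rec^+(F,A)$ produces $F=\alpha(Y)X+\beta(Y)$, and the argument of~(a) again forces $F$ to be good; contradiction. Thus $m\ge 2$.

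For (c), start from an arbitrary $(X,Y)\in\Rec^+(F,A)$. Because $\min(m,n)=m\ge 2$ by (b), Lemma~\ref{09239023r02n02b27c2c8h2}\eqref{09dsfjk3346gbxbcdzrew24} forces every ideal in $\Galg(F,A)$ to be of the form $(X-a)$ or $(Y-b)$. By (a) there are at most two such ideals, so I finish by a short case analysis, using Lemma~\ref{0cv9v8n8dllrklslk52kf0}\,(a) to replace $(X,Y)$ by a modified pair still lying in $\Rec(F,A)$: independent translations in $X$ and $Y$ reduce the mixed case $\{(X-a),(Y-b)\}$ to $\{(X),(Y)\}$ and reduce a single ideal to $(X)$ or $(Y)$; if both ideals are linear in $X$, say $(X-a)$ and $(X-a')$ with $a\ne a'$, the affine rescaling $X\mapsto(X-a)/(a'-a)$ sends them to $(X)$ and $(X-1)$; the symmetric case of two ideals linear in $Y$ is reduced to the previous one by swapping $X\leftrightarrow Y$, which is also permitted by Lemma~\ref{0cv9v8n8dllrklslk52kf0}\,(a). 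The only obstacle is a bit of bookkeeping to check that every configuration falls into one of the two advertised normal forms, but no substantive difficulty arises.
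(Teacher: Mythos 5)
Your proof is correct and takes the route the paper intends: the corollary is quoted here from \cite[4.12]{CassouDaigVGVBFG} without proof, but it is explicitly presented as a consequence of Theorem~\ref{9854dnc2mrhvfc}, and your derivation via \ref{982393298urcnj092}, \ref{0cv9v8n8dllrklslk52kf0} and \ref{09239023r02n02b27c2c8h2} is exactly that deduction. One cosmetic point in (a): if $\alpha(Y)=0$ then $F=\beta(Y)$ can still be a field generator when $\deg\beta=1$ (it is then a variable), so the correct justification is that $F$ would be either good or not a field generator at all, and in both sub-cases the contradiction with badness stands.
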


\begin{examples}  \label {98f298e91us98b1xo27182}
Using \ref{09239023r02n02b27c2c8h2}, we see that
$\Galg(F_J,A) = \{ (X), (Y) \}=\Galg(F_{CND},A)$ and  $\Galg(F_R,A) = \{ (X) \}$.
We will see examples  of bad field generators $F$ of $A$ satisfying $\Galg(F,A) =\emptyset$,
but  we do not have examples such that $ \Galg(F,A) = \{ (X), (X-1) \}.$
\end{examples}

\section{Very good and very bad field generators}
\label {SEC:VGVBFGs}

The next proposition gives a partial characterization of very good field generators.
For the moment, this is the best that we can say on that subject.
(In part \eqref{a-734tc8rgj3rfgh}, let us agree that $\gcd\emptyset=\infty$.)

\begin{proposition}[{\cite[5.3]{CassouDaigVGVBFG}}]     \label {q23328r83yd74r9128}
Let $F$ be a field generator of $A=\kk2$ and $\Delta(F,A) = [d_1, \dots, d_t]$.
\begin{enumerata}

\item \label {a-734tc8rgj3rfgh}
If $\gcd\big( \{ d_1, \dots, d_t \} \setminus \{1\} \big)>1$
then $F$ is a very good field generator of $A$.
In particular,
if at most one $i \in \{1, \dots, t\}$ satisfies $d_i>1$ then $F$ is a very good field generator of $A$.

\item \label {b-734tc8rgj3rfgh}
If at least three $i \in \{1, \dots, t\}$ satisfy $d_i=1$ then $F$ is a very good field generator of $A$.

\item \label {c-734tc8rgj3rfgh} If $F$ is a good but not very good field generator of $A$
then 
$$
\Delta(F,A) = [1, \dots, 1, e_1, \dots, e_s]
$$
where \mbox{\rm ``$1$''} occurs either $1$ or $2$ times,
$s \ge 2$, $\min(e_1, \dots, e_s)>1$ and\\
$\gcd(e_1, \dots, e_s)=1$.

\end{enumerata}
\end{proposition}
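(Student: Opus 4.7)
The plan is to deduce parts (a) and (b) from the two structural tools already available---namely Proposition~\ref{difja;skdfj;aksd}, which describes precisely how $\Delta(F,-)$ changes under a birational extension, and Corollary~\ref{3q939xuh23829x9mh812o}(a), which bounds $|\Gamma(F,A')|$ when $F$ is bad on $A'$---and then to get (c) formally by contraposition of (a) and (b) together with the definition of ``good''.

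For part (a), I would fix any $A'$ with $F\in A' \preceq A$ and apply Proposition~\ref{difja;skdfj;aksd}(a) to the morphism $\Spec A \to \Spec A'$. This tells me that $\Delta(F,A)$ is the concatenation of $\Delta(F,A')$ with the list of degrees of the $f$-horizontal missing curves; in particular $\Delta(F,A')$ is a sub-multiset of $\Delta(F,A)$. Assume for contradiction that $F$ is not a good field generator of $A'$; by Observation~\ref{82364r5gg658943k}, the entry $1$ does not appear in $\Delta(F,A')$, so $\Delta(F,A')$ is a sub-multiset of $\{d_i : d_i>1\}$. But by Corollary~\ref{2384y872tqo8we1ftbs} applied to $A'$ one has $\gcd \Delta(F,A')=1$, whereas the hypothesis forces this gcd to be at least $\gcd(\{d_i : d_i>1\})>1$. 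This contradiction shows $F$ is good on every such $A'$, hence very good. The ``in particular'' clause is then immediate from the convention $\gcd\emptyset=\infty$.

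For part (b), I would again argue by contradiction: suppose $F$ is not very good, so there exists $A'\preceq A$ with $F\in A'$ and $F$ bad in $A'$. Then no $1$ appears in $\Delta(F,A')$, so every $1$ in $\Delta(F,A)$ must come from one of the horizontal missing curves $C_1,\dots,C_h$ of $\Spec A \to \Spec A'$, with corresponding degree $\delta_i=1$. By Proposition~\ref{difja;skdfj;aksd}(b), $\delta_i=1$ forces $C_i \in \Gamma(F,A')$, and since the $C_i$ are distinct the number of $1$'s in $\Delta(F,A)$ is at most $|\Gamma(F,A')|$. By Corollary~\ref{3q939xuh23829x9mh812o}(a) this is at most $2$, contradicting the hypothesis that at least three $d_i$ equal $1$.

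For part (c), both (a) and (b) can be read in contrapositive. If $F$ is good but not very good, then (b) forces the number of $1$'s in $\Delta(F,A)$ to be $\leq 2$, while ``good'' forces it to be $\geq 1$; so $1$ occurs exactly once or twice. Writing the remaining entries as $e_1,\dots,e_s$ with each $e_j>1$, part (a) (contrapositive) gives $\gcd\{d_i:d_i>1\}\leq 1$; since $\gcd\emptyset=\infty$, the set must be nonempty, so $s\geq 1$ and $\gcd(e_1,\dots,e_s)=1$. Finally, a single entry $e_1$ would give $\gcd(e_1)=e_1>1$, forcing $s\geq 2$. I expect no serious obstacle: the substance lies entirely in the two cited results, and the only thing to be careful about is bookkeeping the empty-set gcd convention in (a) and matching ``$\delta_i=1 \Leftrightarrow C_i\in\Gamma(F,A')$'' correctly in (b).
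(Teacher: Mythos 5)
Your proof is correct, and it follows the route the survey itself indicates: the paper states this proposition only by citation to \cite[5.3]{CassouDaigVGVBFG}, but it explicitly remarks that \ref{3q939xuh23829x9mh812o}(a) is what is needed for part (b), and your use of \ref{difja;skdfj;aksd}, \ref{82364r5gg658943k} and \ref{2384y872tqo8we1ftbs} for the rest matches the intended argument. All the details check out, including the nonemptiness of $\Delta(F,A')$ and the handling of $\gcd\emptyset=\infty$.
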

\begin{remark}  \label {pf9823f898012dj}
By~\ref{q23328r83yd74r9128}\eqref{a-734tc8rgj3rfgh},
the polynomials classified in \cite{MiySugie:GenRatPolys},
\cite{NeumannNorbury:simple} and \cite{Sasao_QuasiSimple2006} are special cases of very good field generators.
This gives many complicated examples of very good field generators.
\end{remark}

\begin{example} \label {87hf25f34w32a1a} 
Let $F$ be a bad field generator of $A = \bk[X,Y]$ such that 
$\Galg(F,A) = \{ (X), (Y) \}$ and $\Delta(F,A) = [3,4]$.
For example take $F_{CND}$ for $\phi(X)=(X-1)^5$.
Let $B = \bk[X/Y,\, Y^2/X]$ and note that $A \preceq B$.
Consider the morphisms
$\Spec B \xrightarrow{\Phi} \Spec A \xrightarrow{f} \Spec \bk[F]$
determined by the inclusions $\bk[F] \hookrightarrow A  \hookrightarrow B$.
Then the missing curves of $\Phi$ are $C_1=V(X)$ and $C_2=V(Y)$ and these are $f$-horizontal,
so $\Miss_{\text{\rm hor}}(\Phi,f) = \{ C_1, C_2 \}$.
In the notation of \ref{difja;skdfj;aksd} we have $\delta_1=\delta_2=1$ (because $C_1,C_2 \in \Gamma(F,A)$),
so that result implies that $\Delta(F,B) = [3,4,1,1]$.
Note that {\it $F$ is not a very good field generator of $B$} (because it is bad in $A$).
This shows that, in \ref{q23328r83yd74r9128}\eqref{b-734tc8rgj3rfgh},
one cannot replace ``at least three'' by ``at least two'';
and in the second part of \ref{q23328r83yd74r9128}\eqref{a-734tc8rgj3rfgh}, one cannot replace
``at most one'' by ``at most two''.

Another application of \ref{difja;skdfj;aksd} shows that $F$ is
a good field generator of $B'=\bk[X,Y/X]$ which is not very good and which has $\Delta(F,B') = [3,4,1]$.
\end{example}

\begin{remark}  \label {o832bi87di2837qtduj}
We shall give in \ref{26tsgfvbxnbcbhjdhbkizey} an 
example of a very good field generator $F$ of $A = \kk2$ with $\Delta(F,A)=[3,4,1]$.
So the converse of \ref{q23328r83yd74r9128}\eqref{c-734tc8rgj3rfgh} is not true.
Moreover, noting that $\Delta(F,B') = [3,4,1]$ in \ref{87hf25f34w32a1a}, this will also show that
\begin{quote}
\it There exist good field generators $F,G$ of $A = \kk2$ such that $F$ is very good,
$G$ is not very good, and $\Delta(F,A) = [3,4,1] = \Delta(G,A)$.
\end{quote}
So the degree list $\Delta(F,A)$ does not characterize very good field generators among good field generators.
\end{remark}

The set $\Galg(F,A)$ characterizes very bad field generators among bad field generators: 
result \ref{88x889adb823mdnfv}\eqref{n2n92bm2brxgftr}
gives such a characterization and, in fact,
makes it easy to decide whether a given bad field generator is very bad.

\begin{proposition}[{\cite[5.8]{CassouDaigVGVBFG}}]       \label {88x889adb823mdnfv}
Let $F$ be a bad field generator of $A = \kk2$.
\begin{enumerata}

\item \label {n2n92bm2brxgftr}
$F$ is a very bad field generator of $A$ if and only if $\Galg(F,A) = \emptyset$.

\item Suppose that $\Galg(F,A) \neq \emptyset$.
Then there exists $(X,Y)$ such that $A = \bk[X,Y]$ and $(X) \in \Galg(F,A)$.
For any such pair $(X,Y)$, $F$ is a good field generator of $\bk[X, Y/X]$.

\end{enumerata}
\end{proposition}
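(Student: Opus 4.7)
The plan is to prove part (b) first and then deduce (a) from it together with Proposition~\ref{difja;skdfj;aksd}.

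For part (b), I would first pin down the coordinate system. Since $F$ is bad, Corollary~\ref{3q939xuh23829x9mh812o}(c) gives $(X,Y) \in \Rec(F,A)$ such that $\Galg(F,A) \subseteq \{(X),(Y)\}$ or $\Galg(F,A) \subseteq \{(X),(X-1)\}$. As $\Galg(F,A)\neq\emptyset$, at least one of these principal ideals belongs to $\Galg(F,A)$, and in each case its generator is a variable of $A$; after swapping $X \leftrightarrow Y$ or replacing $X$ by $X-1$, I may assume $(X) \in \Galg(F,A)$. Next set $B = \bk[X, Y/X]$. Since $Y = X \cdot (Y/X)$, $A \subseteq B$, and $B = \kk2$ with $\Frac B = \bk(X,Y) = \Frac A$, so $A \preceq B$.

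Now I produce a partner for $F$ in $B$. Because $(X) \in \Galg(F,A)$, the composite $\bk[F] \hookrightarrow A \twoheadrightarrow A/(X) = \bk[Y]$ is an isomorphism, so $F(0,Y) = aY+b$ with $a \in \bk^*$ and $b \in \bk$. Write $F = aY+b+XH(X,Y)$ with $H \in A$, set $U = Y/X$, and let $G = (F-b)/X$. Substituting $Y = XU$ gives $F-b = X(aU + H(X,XU))$, so $G = aU + H(X,XU) \in \bk[X,U] = B$. To see that $\bk(F,G) = \Frac B$, note that $F - b = XG$ yields $X = (F-b)/G \in \bk(F,G)$ (with $G \neq 0$ since the coefficient of $U$ in $G$, viewed as a polynomial in $U$ over $\bk[X]$, is $a \neq 0$). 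Thus $\bk(F,G) \supseteq \bk(X,G)$; and since $G$ is a nonconstant polynomial in $U$ over $\bk(X)$, the element $U$ is algebraic over $\bk(X,G)$, giving $\bk(X,U) = \bk(X,G)$ by transcendence-degree considerations. Hence $\bk(F,G) = \Frac B$, so $F$ is a good field generator of $B$.

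For part (a), the direction ``$\Galg(F,A) \neq \emptyset \Rightarrow F$ is not very bad'' is immediate from (b), since $B = \bk[X,Y/X]$ satisfies $A \preceq B$ and $F$ is good in $B$. For the converse, suppose $F$ is not very bad; choose $B$ with $A \preceq B$ in which $F$ is good. By Observation~\ref{82364r5gg658943k}, ``$1$'' occurs in $\Delta(F,B)$. Applying Proposition~\ref{difja;skdfj;aksd} to $\Spec B \xrightarrow{\Phi} \Spec A \xrightarrow{f} \Spec \bk[F]$, one has $\Delta(F,B) = [\Delta(F,A), \delta_1, \dots, \delta_h]$, where the $\delta_i$ are the degrees of the $f$-horizontal missing curves of $\Phi$. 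Since $F$ is bad in $A$, ``$1$'' does not occur in $\Delta(F,A)$, so some $\delta_i = 1$; by part~(b) of Proposition~\ref{difja;skdfj;aksd} this forces $C_i \in \Gamma(F,A)$, whence $\Galg(F,A)\neq\emptyset$. The main obstacle is part~(b): identifying the correct extension $B = \bk[X,Y/X]$ and the partner $G = (F-b)/X$, and verifying that $G$ actually lies in $B$ and joins $F$ to generate $\Frac B$; once (b) is available, (a) is a clean application of Proposition~\ref{difja;skdfj;aksd}.
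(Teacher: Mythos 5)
Your treatment of part (a) is sound: reducing the forward direction to (b), and obtaining the converse from \ref{difja;skdfj;aksd} together with \ref{82364r5gg658943k} (a ``$1$'' in $\Delta(F,B)$ cannot come from $\Delta(F,A)$ since $F$ is bad in $A$, so it must be some $\delta_i=1$, which by \ref{difja;skdfj;aksd}(b) forces $C_i\in\Gamma(F,A)$) is exactly the right use of the machinery. The reduction in (b) to the case $(X)\in\Galg(F,A)$ via \ref{3q939xuh23829x9mh812o}(c) is also fine.

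The gap is in your verification that $G=(F-b)/X$ is a good partner for $F$ in $B=\bk[X,Y/X]$. From $F-b=XG$ one gets $\bk(F,G)=\bk(F,X)$, and your step ``$U$ is algebraic over $\bk(X,G)$, giving $\bk(X,U)=\bk(X,G)$ by transcendence-degree considerations'' is false: an algebraic extension between fields of equal transcendence degree need not be trivial. In fact $[\Frac B:\bk(F,X)]=\deg_Y F$, because $F(X,T)-F$ is irreducible over $\bk(F,X)$; and since $F$ is a \emph{bad} field generator, the pair $(X,Y)$ produced by \ref{3q939xuh23829x9mh812o}(c) lies in $\Rec(F,A)$ with $\bideg_A(F)=(m,n)$, $2\le m\le n$, by \ref{3q939xuh23829x9mh812o}(b), so $\deg_Y F\ge 2$. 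Hence $\bk(F,G)=\bk(F,X)$ is always a \emph{proper} subfield of $\Frac B$ in the situation at hand, and your candidate $G$ never works. The argument for (b) should instead avoid exhibiting an explicit partner: $V(X)$ is the unique missing curve of $\Phi:\Spec B\to\Spec A$, and since $(X)\in\Galg(F,A)$ it belongs to $\Gamma(F,A)$, so it is $f$-horizontal and its degree over $\Spec\bk[F]$ is $1$; then \ref{difja;skdfj;aksd} gives $\Delta(F,B)=[\Delta(F,A),1]$, and \ref{82364r5gg658943k} shows that $F$ is a good field generator of $B$.
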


\begin{example} \label {GoodBadUgly}
Let $F$ be a bad field generator of $A = \bk[X,Y]=\kk2$ such that $\Galg(F,A) = \{ (X) \}$
(for instance, $F = F_R$ for any choice of $\phi$; see \ref{98f298e91us98b1xo27182}).
By \ref{88x889adb823mdnfv}, $F$ is not a very bad field generator of $A$.
Let $B = \bk[X/(Y-1), Y]$ and note that $F \in A \preceq B$; so $F$ is a field generator of $B$ and we claim:
\begin{equation}  \label {fo928b9827y3913}
\text{$F$ is a very bad field generator of $B$.}
\end{equation}
To see this, consider $\Spec B \xrightarrow{\Phi} \Spec A \xrightarrow{f} \Spec \bk[F]$ as in \ref{difja;skdfj;aksd}.
We have $\Gamma(F,A) = \{D\}$ where $D = V(X) \subset\Spec A$ and
$\Miss\Phi = \{ C \}$ 
where $C = V(Y-1) \subset \Spec A$.
If $f(C)$ is a point then $\Miss_{\text{\rm hor}}(\Phi,f) = \emptyset$ and
\ref{difja;skdfj;aksd}(a) implies that $\Delta(F,B) = \Delta(F,A)$;
if $f(C)$ is not a point then $\Miss_{\text{\rm hor}}(\Phi,f) = \{ C \}$ 
and \ref{difja;skdfj;aksd}(a) implies that $\Delta(F,B) = [\Delta(F,A), \delta ]$ where $\delta$ is
the degree of $f|_C : C \to \Spec A$,
and where $\delta>1$ by \ref{difja;skdfj;aksd}(b) and because $C \notin \Gamma(F,A)$.
Since ``$1$'' does not occur in $\Delta(F,A)$ by \ref{82364r5gg658943k},
it follows (in both cases) that it does not occur in $\Delta(F,B)$ either;
so (by \ref{82364r5gg658943k} again) $F$ is a bad field generator of $B$.

In view of \ref{88x889adb823mdnfv}, there only remains to show that $\Gamma(F,B) = \emptyset$.
Suppose that there exists an element $D'$ of $\Gamma(F,B)$.
Then \ref{56fewf8r8t34kd223lwgfuio} implies that $\Phi|_{D'} : D' \to D$ is an isomorphism.
This is not the case, because $D \nsubseteq \image\Phi$
(the maximal ideal $(X,Y) \in \Spec A$ is a point of $D$ but not of $\image\Phi$).
This proves \eqref{fo928b9827y3913}.
\end{example}

Observe that the very bad field generators (of $B$) constructed in \ref{GoodBadUgly} are not lean in $B$,
due to the method of construction.
All examples of very bad field generators given in \cite{CassouDaigVGVBFG} are constructed by that same method,
and hence are not lean.
In \ref{p92839819ueg87gdywfe6}, below, we give the first example of a very bad field generator that is also lean.

\section{Lean field generators}
\label {Leanbadfieldgenerators}

See the introduction for the statement of the problems ``(i)'' and ``(ii)'', that will occupy us in this section.
See in particular \ref{823gdvcuuyv2trc6374rnmujfj} for the definition of the property of being lean.
We immediately observe:

\begin{lemma}  \label {dg72576wqs81128753d}
If $F$ is a good field generator of $A = \kk2$, then $F$ is not lean in $A$.
\end{lemma}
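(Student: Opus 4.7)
The plan is to exhibit, directly from the definitions, a subalgebra $A' \subseteq A$ satisfying $F \in A'$, $A' \preceq A$, and $A' \neq A$; such an $A'$ witnesses that $F$ is not lean in $A$.

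Since $F$ is a good field generator of $A$, I can pick $G \in A$ with $\Frac A = \bk(F,G)$. The field $\Frac A$ has transcendence degree $2$ over $\bk$, so $F$ and $G$ are algebraically independent over $\bk$; hence $A_0 := \bk[F,G]$ is $\bk$-isomorphic to $\kk2$. By construction $F \in A_0 \subseteq A$ and $\Frac A_0 = \bk(F,G) = \Frac A$, so $A_0 \preceq A$. If $A_0 \neq A$, I take $A' = A_0$ and I am done.

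The remaining case is $A_0 = A$, i.e.\ $(F,G) \in \Cgoth(A)$. In that situation I would instead consider $A_1 := \bk[F, FG]$. The elements $F$ and $FG$ are algebraically independent over $\bk$ (since $\bk(F,FG)$ contains $G = (FG)/F$, hence equals $\Frac A$), so $A_1$ is isomorphic to $\kk2$; and $F \in A_1 \subseteq A$ with $\Frac A_1 = \Frac A$, giving $A_1 \preceq A$. To see that $A_1 \neq A$, I would write a generic element of $A_1$ as $P(F,FG) = \sum_{i,j} c_{ij}\, F^{i+j} G^j \in \bk[F,G]$, so every monomial $F^a G^b$ appearing in such an expression satisfies $a \ge b$. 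The monomial $G = F^0 G^1$ violates this inequality, so $G \notin A_1$, and hence $A_1 \subsetneq A$.

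The argument is quite direct and I do not anticipate any substantive obstacle; the only point requiring a moment of care is the exponent comparison showing $G \notin \bk[F,FG]$ in the degenerate case where $F$ is already a variable of $A$.
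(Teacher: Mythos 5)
Your proposal is correct and uses essentially the same construction as the paper, whose entire proof is: pick $G \in A$ with $\Frac A = \bk(F,G)$, then $F \in \bk[F,FG] \preceq A$ and $\bk[F,FG] \neq A$. Your preliminary case distinction on whether $\bk[F,G] = A$ is unnecessary, since your monomial-exponent argument shows $G \notin \bk[F,FG]$ (hence $\bk[F,FG] \neq A$) using only the algebraic independence of $F$ and $G$, which holds in both cases.
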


\begin{proof}
Since $F$ is a good field generator of $A$, we may pick
$G\in A$ such that $\Frac A = \bk(F,G)$; then $F \in \bk[F,FG] \preceq A$
and $\bk[F,FG] \neq A$.
\end{proof}

The paper \cite{CN-Daigle:Lean} (in preparation) contains the following result:

\begin{theorem} \label {546123r5eghjx}
For $F \in A = \kk2$, the following are equivalent:
\begin{enumerata}

\item $F \in A$ does not have a lean factorization.

\item There exists a very good field generator $G$ of $A$ such that $F \in \bk[G]$.

\end{enumerata}
\end{theorem}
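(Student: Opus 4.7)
For $(b) \Rightarrow (a)$, the plan is to show that if $G$ is a very good field generator of $A$ with $F \in \bk[G]$, then no $A' \preceq A$ containing $F$ witnesses leanness of $F$. Fix such an $A'$ and observe first that $G$ itself must lie in $A'$: writing $F = p(G)$ and rescaling $p$ to be monic in $\bk[T]$, $G$ is a root of the monic relation $p(T) - F = 0$ with coefficients in $\bk[F] \subseteq A'$, so $G \in \Frac A = \Frac A'$ is integral over $A'$; since $A'$ is a polynomial ring, hence integrally closed in its fraction field, we conclude $G \in A'$. By Definition~\ref{Eir128372et12uhwjd}, $G$ is a good field generator of $A'$, so Lemma~\ref{dg72576wqs81128753d} yields that $G$ is not lean in $A'$. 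Thus there exists $A'' \preceq A'$ with $G \in A''$ and $A'' \neq A'$; since $F \in \bk[G] \subseteq A''$, $F$ is not lean in $A'$, as required.

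For $(a) \Rightarrow (b)$, the plan is to dispose of two trivial cases and then reduce to a structural claim about a subfield of $\Frac A$. If $F \in \bk$, take $G$ to be any variable of $A$ (a variable is always very good). If $F$ is itself a field generator of $A$, then Theorem~\ref{888dfgdzb5n3d2ff}—the analogue of the present result for field generators, which identifies ``no lean factorization'' with ``very good''—lets us take $G = F$. Now assume $F \in A \setminus \bk$ is not a field generator of $A$, and let $L$ denote the algebraic closure of $\bk(F)$ in $\Frac A$; this is a proper finite extension of $\bk(F)$ of transcendence degree $1$ over $\bk$. The core reduction is that it suffices to produce a polynomial $G \in A$ with $L = \bk(G)$: then $\Frac A / \bk(G)$ is purely transcendental of degree $1$ (since $\Frac A / L$ always is), so $G$ is a field generator of $A$; we have $F \in L \cap A = \bk(G) \cap A = \bk[G]$, using the elementary fact that a rational function of a nonconstant $G \in A$ which happens to lie in $A$ is already a polynomial in $G$ (otherwise its denominator would vanish on a nonempty closed subscheme of $\Spec A$ where the numerator does not, producing a genuine pole); and finally, running the integrality argument of $(b) \Rightarrow (a)$ in reverse shows that any putative lean factorization of $G$ in some $A^* \preceq A$ would, via $F \in \bk[G] \subseteq A^*$ and integrality, yield a lean factorization of $F$ in the same $A^*$, so the no-lean-factorization hypothesis propagates from $F$ to $G$, and Theorem~\ref{888dfgdzb5n3d2ff} then promotes $G$ to very good.

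The main obstacle is therefore the structural claim: \emph{if $F$ has no lean factorization, then the algebraic closure $L$ of $\bk(F)$ in $\Frac A$ is a rational function field $\bk(G)$ whose generator $G$ can be chosen inside $A$}---equivalently, $L \cap A$ is a polynomial ring $\bk[G]$ in one variable (rather than a proper localization $\bk[G,f(G)^{-1}]$), and the smooth projective model of $L/\bk$ is $\proj^1$. The natural strategy is by contraposition: assuming either that the smooth projective model of $L$ has positive genus, or that $L \cap A$ is a proper localization of a polynomial ring, one would aim to construct some $A' \preceq A$ with $F \in A'$ in which $F$ is lean, thereby producing a lean factorization. Executing this likely requires careful analysis of how the dicriticals of $(F,A')$ and the $f$-horizontal missing curves behave under birational extensions compatible with the Stein-type factorization $\Spec A \to \Spec(L \cap A) \to \Spec \bk[F]$, using the tools of~\ref{difja;skdfj;aksd} and~\ref{56fewf8r8t34kd223lwgfuio}; this is where the bulk of the work in~\cite{CN-Daigle:Lean} is expected to lie.
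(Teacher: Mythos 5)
First, some context: the paper does not actually prove Theorem~\ref{546123r5eghjx}. It attributes the proof to \cite{CN-Daigle:Lean} (in preparation) and proves only the special case \ref{888dfgdzb5n3d2ff} where $F$ is a field generator, explicitly warning that the degree argument used there ``does not seem to work in the general case.'' So there is no proof here to compare yours against; I can only assess your argument on its own terms. Your direction (b)$\Rightarrow$(a) is correct and complete: writing $F=p(G)$ with $p$ nonconstant, $G$ is integral over $\bk[F]$, hence lies in every normal $A'$ with $\bk[F]\subseteq A'\preceq A$, and \ref{dg72576wqs81128753d} then shows $F$ is lean in no such $A'$. (The case $F\in\bk$, where $p$ is constant, needs a separate one-line remark: a constant lies in every $A''\preceq A'$ and so is never lean.) The opening reductions of (a)$\Rightarrow$(b) and the propagation of ``no lean factorization'' from $F$ to $G$ by the same integrality trick are also fine.

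The genuine gap is in the ``core reduction'' of (a)$\Rightarrow$(b). You assert that $\Frac A/L$ is \emph{always} purely transcendental, where $L$ is the algebraic closure of $\bk(F)$ in $\Frac A$; this is false. Take $F=Y^2-X^3-1$ over $\Comp$: its general fibres are irreducible of genus $1$, so $L=\bk(F)$, $L\cap A=\bk[F]$ is a polynomial ring, and $L$ is rational --- your ``structural claim'' holds verbatim --- yet $F$ is not a field generator and $\Frac A/L$ is the function field of a genus-one curve over $\bk(F)$. Consequently, producing $G\in A$ with $L=\bk(G)$ and $L\cap A=\bk[G]$ does \emph{not} make $G$ a field generator, and the reduction collapses: what you would still need --- that hypothesis (a) forces the generic fibre of $F$ to be rational over $L$, i.e.\ that $G$ is a field generator at all --- is the actual heart of the theorem, not a consequence of the structural claim. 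Equivalently, your argument must somewhere show that a polynomial such as $Y^2-X^3-1$ \emph{does} admit a lean factorization, and the outline never engages with this. Since, in addition, the structural claim itself is left unproved (you defer ``the bulk of the work''), and since the theorem is stated over an arbitrary field where even the rationality of $L$ is delicate, the implication (a)$\Rightarrow$(b) remains essentially open in your proposal.
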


We shall now give the proof of the special case \ref{888dfgdzb5n3d2ff} of \ref{546123r5eghjx},
because it is considerably simpler than that of the general case,
and because we only need this special case in the present paper.
The proof of the special case is based on simple minded degree considerations,
an approach that does not seem to work in the general case.
See the introduction for the notations regarding degree. 

\begin{lemma}  \label {of82098u12bw9}
Let $F$ be a rectangular element of $A = \kk2$.
\begin{enumerata}

\item $\deg_A(F) = \deg_\rho(F)$ for any $\rho \in \Rec(F,A)$.

\item $\deg_A(F) < \deg_B(F)$ for every strict inclusion $A \subset B = \kk2$.

\end{enumerata}
\end{lemma}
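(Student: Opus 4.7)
The plan is to unify both parts via a single substitution argument. Fix $\rho = (X, Y) \in \Rec(F, A)$ and $(m, n) = \bideg_A(F)$. Then $\deg_\rho(F) = m + n$, and the support of $F$ in $(X, Y)$-coordinates lies in $\langle (0, 0), (m, 0), (0, n), (m, n)\rangle$ with $(m, n)$ belonging to that support. The central combinatorial observation, valid for any $p, q \ge 1$, is that $ip + jq \le mp + nq$ for every $(i, j)$ in that rectangle, with equality if and only if $(i, j) = (m, n)$. I will apply this inequality twice.

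For (a), let $\gamma = (U, V) \in \Cgoth(A)$ be arbitrary and write $X = P(U, V)$, $Y = Q(U, V)$ with $P, Q \in \bk[U, V] = A$. Since $\bk[P, Q] = \bk[X, Y] = A$, the pair $(P, Q)$ is itself a coordinate system of $A$, so $P, Q \notin \bk$ and hence $p := \deg_\gamma(P) \ge 1$ and $q := \deg_\gamma(Q) \ge 1$. Expanding $F(P, Q) = \sum_{(i, j)} a_{i, j}\, P^i Q^j$ and using the rectangle-corner inequality together with the fact that $\bk[U, V]$ is a domain (so leading forms multiply without vanishing), the total-degree-$(mp + nq)$ component of $F(P, Q)$ equals $a_{m, n}\, \tilde P^m \tilde Q^n \ne 0$, where $\tilde P, \tilde Q$ denote the leading forms of $P, Q$ in $(U, V)$. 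Hence $\deg_\gamma(F) = mp + nq \ge m + n$, and since $\deg_\rho(F) = m + n$ is actually realised, we conclude $\deg_A(F) = m + n = \deg_\rho(F)$.

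For (b), apply the same substitution with $\gamma = (U, V) \in \Cgoth(B)$. Again write $X = P(U, V)$, $Y = Q(U, V)$; one still has $\bk[P, Q] = A$, but now $A \subset B = \bk[U, V]$ is strict, so $(P, Q) \notin \Cgoth(B)$. This forces $\max(p, q) \ge 2$: otherwise $p = q = 1$, and either the linear parts of $P, Q$ are $\bk$-linearly independent in $\bk U + \bk V$ (yielding $\bk[P, Q] = B$, contradicting strictness) or they are dependent (so $P$ and $Q$ satisfy a $\bk$-algebraic relation, hence $\trdeg_\bk A \le 1$, contradicting $A = \kk2$). With $\max(p, q) \ge 2$ and $m, n \ge 1$, we get $mp + nq = (m + n) + m(p - 1) + n(q - 1) \ge m + n + 1$, and the same non-cancellation computation as in (a) gives $\deg_\gamma(F) = mp + nq > m + n$. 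Since this holds for every $\gamma \in \Cgoth(B)$, we conclude $\deg_B(F) > m + n = \deg_A(F)$.

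The heart of the argument is the rectangle-corner inequality, which blocks any cancellation at the top total degree in $(U, V)$ and is precisely where rectangularity of $F$ enters. The only delicate point is the $\max(p, q) \ge 2$ dichotomy in (b), which isolates the sole place where the strictness of $A \subset B$ is used; it is standard but is worth phrasing as a brief sublemma so that the main flow stays clean.
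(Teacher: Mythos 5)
Your proof is correct and follows essentially the same route as the paper: both rest on the observation that for $\rho\in\Rec(F,A)$ and any target coordinate system, the corner monomial $U^mV^n$ strictly dominates all other monomials of the support, so there is no cancellation and $\deg_\gamma(F)=me_1+ne_2$. The only difference is bookkeeping (the paper treats $A\subseteq B$ in one pass and gets (a) as the case $A=B$), and your explicit $\max(p,q)\ge 2$ dichotomy just spells out a step the paper leaves implicit.
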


\begin{proof}
Consider an inclusion $A \subseteq B$ where $A=\kk2$, $B=\kk2$ and $F$ is a rectangular element of $A$.
Choose $\gamma = (X,Y) \in \Cgoth(B)$ such that $\deg_\gamma(F) = \deg_B(F)$.
Choose $\rho = (U,V) \in \Rec(F,A)$; then
$$
\text{there exist $m,n \ge 1$ such that 
$(m,n) \in \supp_\rho(F) \subseteq \langle (0,0),(m,0),(0,n),(m,n) \rangle$.}
$$
If we define $e_1 = \deg_\gamma(U)$ and $e_2 = \deg_\gamma(V)$ then
$\deg_\gamma(U^mV^n) = me_1+ne_2$ and, for each $(i,j) \in \supp_\rho(F) \setminus \{(m,n)\}$,
$\deg_\gamma(U^iV^i) = ie_1+je_2 <  me_1+ne_2$; so 
$$
\deg_B(F) = \deg_\gamma F = me_1+ne_2 .
$$ 
Since $\deg_\rho(F) = m+n$, we have
\begin{equation} \label {z9198287g1s9wq}
\deg_B(F) - \deg_\rho(F) \ge m(e_1-1) + n(e_2-1) \ge 0
\end{equation}
and consequently
\begin{equation} \label {92gd61dyvdkwo}
\deg_A(F) \le \deg_\rho(F) \le \deg_B(F).
\end{equation}
Assertion (a) follows from the special case $A=B$ of \eqref{92gd61dyvdkwo}.
To prove (b) we note that
the condition $\deg_A(F) = \deg_B(F)$ implies  (by (a) and \eqref{z9198287g1s9wq}) that $e_1=1=e_2$,
so $A = \bk[U,V] = \bk[X,Y] = B$.
\end{proof}

\begin{proposition} \label {888dfgdzb5n3d2ff}
For a field generator $F$ of $A = \kk2$, the following are equivalent:
\begin{enumerata}

\item $F \in A$ has a lean factorization.

\item $F$ is not a very good field generator of $A$.

\end{enumerata}
\end{proposition}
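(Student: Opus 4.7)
The implication (a) $\Rightarrow$ (b) is immediate from the preceding lemma. Indeed, suppose $F \in A$ admits a lean factorization: there is $A' \preceq A$ with $F \in A'$ and $F$ lean in $A'$. By Lemma~\ref{dg72576wqs81128753d}, a good field generator of $A'$ cannot be lean in $A'$, so $F$ must be a \emph{bad} field generator of $A'$. Since $F \in A' \preceq A$ and $F$ fails to be good in $A'$, Definition~\ref{Eir128372et12uhwjd} tells us that $F$ is not very good in $A$.

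The content is the reverse direction (b) $\Rightarrow$ (a), which I would handle by a degree-minimization argument based on Lemma~\ref{of82098u12bw9}. Starting from the assumption that $F$ is not very good in $A$, pick a witness $A_0$ with $F \in A_0 \preceq A$ in which $F$ is a bad field generator, and form the set
$$
\Sigma = \setspec{A''}{F \in A'' \preceq A_0}.
$$
The first step is to observe that $F$ is a bad field generator of every $A'' \in \Sigma$: if $F$ were good in some $A''$, then by the monotonicity recalled just before Definition~\ref{Eir128372et12uhwjd} (goodness is preserved under $A'' \preceq A_0$), $F$ would be good in $A_0$, contradicting the choice of $A_0$.

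The key pivot is now to apply Russell's Theorem~\ref{982393298urcnj092}: since $F$ is a field generator of each $A'' \in \Sigma$ but \emph{not} a variable of $A''$ (variables are trivially good field generators, while here $F$ is bad in $A''$), $F$ must be a rectangular element of every $A'' \in \Sigma$. This is what unlocks Lemma~\ref{of82098u12bw9}, which requires rectangularity in the smaller ring of the extension. I then choose $A''' \in \Sigma$ minimizing $\deg_{A''}(F)$ over $A'' \in \Sigma$ (the minimum exists since the degrees are non-negative integers and $\Sigma \ne \emptyset$), and claim $F$ is lean in $A'''$. If not, there exists a strict inclusion $A_* \subsetneq A'''$ with $A_* = \kk2$ and $F \in A_*$; then $A_* \preceq A''' \preceq A_0$ places $A_*$ in $\Sigma$, so $F$ is rectangular in $A_*$, and Lemma~\ref{of82098u12bw9}(b) yields $\deg_{A_*}(F) < \deg_{A'''}(F)$, contradicting minimality. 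Hence $F$ is lean in $A'''$, and $A''' \preceq A_0 \preceq A$ exhibits a lean factorization.

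The main conceptual obstacle is recognizing that Lemma~\ref{of82098u12bw9}(b) demands rectangularity in the \emph{smaller} ring of every strict inclusion we descend through, not just in the starting ring $A_0$. Russell's Theorem~\ref{982393298urcnj092} is precisely the tool that supplies this uniformly across $\Sigma$, with the preserved badness of $F$ playing the role of certifying at each stage that $F$ is not merely a variable. Everything else is bookkeeping.
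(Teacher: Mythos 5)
Your proposal is correct and follows essentially the same route as the paper: (a)~$\Rightarrow$~(b) via Lemma~\ref{dg72576wqs81128753d}, and (b)~$\Rightarrow$~(a) by forming the set of subrings $A''$ with $F \in A'' \preceq A_0$, noting that $F$ stays bad (hence, by Russell's Theorem~\ref{982393298urcnj092}, rectangular) in each of them, and using the strict degree decrease of Lemma~\ref{of82098u12bw9}(b) to extract a minimal, hence lean, element. The paper phrases the last step as the existence of an inclusion-minimal element of $\Sigma$ rather than a degree-minimizing one, but the two are interchangeable here.
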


\begin{proof}
Suppose that (b) holds. Then there exists $A'$ such that
$F \in A' \preceq A$ and $F$ is a bad field generator of $A'$.
Consider the set $\Sigma = \setspec{ R }{ \text{$F \in R \preceq A'$}}$, which is nonempty since $A' \in \Sigma$.
For each $R \in \Sigma$, $F$ is a bad field generator of $R$ and hence (by \ref{982393298urcnj092})
a rectangular element of $R$.
So \ref{of82098u12bw9} implies that if $R_1 \subset R_2$ is a strict inclusion with $R_1,R_2 \in \Sigma$,
then $\deg_{R_1}(F) < \deg_{R_2}(F)$. It follows that $\Sigma$ has a minimal element $R_0$.
Then $F \in R_0 \preceq A$ and $F$ is lean in $R_0$, so (a) is true.

Conversely, suppose that (a) holds.
Then there exists $A'$ such that $F\in A'\preceq A$ and $F$ is lean in $A'$. 
Then (by \ref{dg72576wqs81128753d}) $F$ is a bad field generator of $A'$, so (b) holds.
\end{proof}

Result \ref{888dfgdzb5n3d2ff} partially solves problem ``(i)'' stated in the introduction.
To complete the solution of (i) there would remain to classify very good field generators,
but this question is open.

We shall now make some modest contributions to the problem  (called ``(ii)'' in the introduction)
of classifying lean field generators.
Recall (\ref{dg72576wqs81128753d}) that if a field generator is lean then it is bad.
Also observe that,
by \ref{888dfgdzb5n3d2ff}, it is a priori clear that lean field generators exist
(we know that there exists a bad field generator $F$ of $A=\kk2$,
and \ref{888dfgdzb5n3d2ff} implies that there exists $A'$ satisfying $F \in A' \preceq A$ and such that
$F$ is lean in $A'$; then $F$ is a bad field generator of $A'$ that is lean in $A'$).
For specific examples, consider \ref{872349726edh83uw} with:

\begin{corollary}
Every bad field generator of degree $21$ is lean.
\end{corollary}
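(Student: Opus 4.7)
The plan is to derive a contradiction from the hypothesis that $F$ is a bad field generator of $A=\kk2$ with $\deg_A(F)=21$ which is not lean in $A$. This is a short argument that simply combines three earlier results: the minimum-degree fact \ref{BFGhavedegatleast21}, Russell's rectangularity theorem \ref{982393298urcnj092}, and the strict-inequality Lemma \ref{of82098u12bw9}(b).

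First I would unpack the failure of leanness: by definition, there exists $A'$ with $F\in A'\preceq A$ and $A'\neq A$, hence $A'\subsetneq A$ strictly. I would then observe that $F$ remains a bad field generator of $A'$. Indeed, $F$ is a field generator of $A'$ since $A'\preceq A$ (field-generator status is preserved in both directions under $\preceq$); and if $F$ were good in $A'$, the implication ``good in the smaller ring $\Rightarrow$ good in the larger ring'' noted just before Definition \ref{Eir128372et12uhwjd} would force $F$ to be good in $A$, contradicting badness. In particular, $F$ cannot be a variable of $A'$ (a variable is always a good field generator).

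Next, Russell's Theorem \ref{982393298urcnj092} applies to $F$ in $A'$ and yields that $F$ is a rectangular element of $A'$. This is the ingredient that makes Lemma \ref{of82098u12bw9}(b) available: applied to the strict inclusion $A'\subsetneq A$, it gives
\[
\deg_{A'}(F) \;<\; \deg_A(F) \;=\; 21.
\]
On the other hand, paragraph \ref{BFGhavedegatleast21} (which records Russell's determination of the small-degree cases) asserts that every bad field generator of any $\kk2$ has degree at least $21$. Since $F$ is a bad field generator of $A'$, this forces $\deg_{A'}(F)\ge 21$, contradicting the strict inequality above.

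There is no real obstacle here; the proof is essentially immediate once one sees that ``rectangular'' is exactly the property needed to feed into \ref{of82098u12bw9}(b), and that badness in $A$ descends to badness in $A'$ when $A'\preceq A$. The only thing one might pause over is checking that $F$ really is bad (not merely a field generator) in the smaller ring $A'$, which is why I would make that step explicit before invoking \ref{982393298urcnj092} and \ref{BFGhavedegatleast21}.
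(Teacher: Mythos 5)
Your proof is correct and follows essentially the same route as the paper: assume non-leanness, get a strict subring $A'$ in which $F$ is still a bad field generator, then play the lower bound $\deg_{A'}(F)\ge 21$ from \ref{BFGhavedegatleast21} against the strict inequality $\deg_{A'}(F)<\deg_A(F)=21$ from \ref{of82098u12bw9}. The only difference is that you spell out the intermediate steps (badness descends to $A'$; rectangularity via \ref{982393298urcnj092} is what licenses \ref{of82098u12bw9}(b)) that the paper leaves implicit.
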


\begin{proof}
Let $F$ be a bad field generator of $B = \kk2$ such that $\deg_B(F)=21$.
By contradiction, assume that $F$ is not lean in $B$.
Then $F \in A \preceq B$ for some $A$ such that $A \neq B$.
Then $F$ is a bad field generator of $A$, so \ref{BFGhavedegatleast21} gives the first inequality in
$$
21 \le \deg_A(F) < \deg_B(F)
$$
while the second inequality is \ref{of82098u12bw9}.
This contradicts the assumption.
\end{proof}

It is much more difficult to determine whether there exist {\it very\/} bad field generators that are lean.
In fact this question remained open for several years.
The very bad field generators exhibited in \ref{GoodBadUgly} are---by construction---not lean.
In \ref{p92839819ueg87gdywfe6}, below, we give an example of a very bad field generator that is lean.
First, we need to develop some tools.

\begin{notation}
Given a birational morphism $\Phi : \aff^2 \to \aff^2$ 
and a coordinate system $\gamma$ of $\aff^2$, we write
$$
\delta_\gamma( \Phi ) = \begin{cases}
\max\setspec{ \deg_\gamma(C) }{ C \in \Cont(\Phi) }, 
& \text{if $\Cont(\Phi) \neq \emptyset$;} \\ 
1, & \text{if $\Cont(\Phi) = \emptyset$}.
\end{cases}
$$
See \ref{xvscgdfhh82927475} for the notation $\Cont(\Phi)$, and observe\footnote{This is well known 
and easy to show when $\bk$ is algebraically closed (e.g., \cite[2.6(b)]{CassouDaigBir}),
and it is straightforward to generalize this to arbitrary $\bk$.}
that $\Cont(\Phi)$ is empty if and only if $\Phi$ is an automorphism of $\aff^2$.
\end{notation}

\begin{lemma}  \label {m93919gdqwd8719}
Assume that $\bk$ is algebraically closed.
Let $F \in A \preceq B$ where $F$ is a rectangular element of $A$ and
let $\Phi : \Spec B \to \Spec A$ be the morphism determined by the inclusion $A \to B$.
Then
$$
\deg_A(F) \le \frac{ \deg_\gamma(F) }{ \delta_\gamma(\Phi) }
\quad \text{for every $\gamma \in \Cgoth(B)$.}
$$
Moreover, if $A \neq B$ then the above inequality is strict for all $\gamma \in \Cgoth(B)$.
\end{lemma}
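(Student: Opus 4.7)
The plan is to reduce the inequality to two concrete numerical facts about the data $(m,n,e_1,e_2)$ attached to $F$ and $\gamma$. I would pick any $\rho = (U,V) \in \Rec(F,A)$ and write $(m,n)$ for the ``corner'' of the support rectangle, so $F = \sum_{(i,j)} a_{ij} U^i V^j$ with $a_{mn}\neq 0$ and $\supp_\rho(F) \subseteq \langle(0,0),(m,0),(0,n),(m,n)\rangle$. By \ref{of82098u12bw9}(a), $\deg_A(F) = \deg_\rho(F) = m+n$. Setting $e_1 = \deg_\gamma(U)$ and $e_2 = \deg_\gamma(V) \ge 1$ (neither $U$ nor $V$ is constant since $\bk[U,V]=\kk2$), the same leading-term comparison as in the proof of \ref{of82098u12bw9} gives $\deg_\gamma(F) = m e_1 + n e_2$.

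The geometric core of the argument is the bound $\delta_\gamma(\Phi) \le \min(e_1,e_2)$. I would fix $C \in \Cont(\Phi)$; since $\bk$ is algebraically closed, $\Phi(C) = \{p\}$ for a closed point $p \in \Spec A$ whose residue field is $\bk$, so $u_0 = U(p)$ and $v_0 = V(p)$ lie in $\bk$. Both $U-u_0$ and $V-v_0$ vanish on $C$, so the irreducible polynomial $p_0 \in B$ defining $C$ divides each of them in the UFD $B$. Hence $\deg_\gamma(C) = \deg_\gamma(p_0) \le \min(e_1,e_2)$, and maximizing over $C$ (the case $\Cont(\Phi)=\emptyset$ is trivial, as $e_1,e_2\ge 1$) yields the claimed bound on $\delta_\gamma(\Phi)$.

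With these two ingredients in hand, assuming without loss of generality that $e_1\le e_2$, the non-strict inequality is immediate:
$$
\deg_A(F)\cdot \delta_\gamma(\Phi) \;\le\; (m+n)e_1 \;=\; me_1 + ne_1 \;\le\; me_1 + ne_2 \;=\; \deg_\gamma(F).
$$
For the strict version under the hypothesis $A\neq B$, the morphism $\Phi$ is not an automorphism, so $\Cont(\Phi)\neq\emptyset$. If $e_1<e_2$, the last inequality above is already strict because $n\ge 1$. The subtle case is $e_1=e_2=:e$, and this is where I expect the real work to lie: I would argue $\delta_\gamma(\Phi)<e$ by contradiction. If some $C\in\Cont(\Phi)$ achieved $\deg_\gamma(C)=e$, then $\deg_\gamma(p_0)=e=\deg_\gamma(U-u_0)$ would force $U-u_0=\alpha p_0$ for some $\alpha\in\bk^*$, and similarly $V-v_0=\beta p_0$ for some $\beta\in\bk^*$; eliminating $p_0$ would yield a nontrivial $\bk$-linear relation among $U$, $V$, and $1$, contradicting $A=\bk[U,V]=\kk2$. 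This is the only delicate point of the proof, since the arithmetic slack in the last chain of inequalities degenerates precisely when $e_1=e_2$, so the strictness has to be extracted from the geometry of $\Phi$ rather than from the comparison of exponents.
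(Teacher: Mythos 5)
Your proposal is correct and follows essentially the same route as the paper: both use $\deg_A(F)=\deg_\rho(F)=m+n$ and $\deg_\gamma(F)=me_1+ne_2$ from the rectangular support, bound $\delta_\gamma(\Phi)\le\min(e_1,e_2)$ via divisibility of $U-u_0$ and $V-v_0$ by the defining equation of a contracting curve, and derive strictness from the observation that equality would force $U-u_0$ and $V-v_0$ to be associates of that equation, contradicting the algebraic independence of $U$ and $V$. The only difference is organizational (you split the strictness into the cases $e_1<e_2$ and $e_1=e_2$, while the paper analyzes when equality holds in the combined chain), which is immaterial.
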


\begin{proof}
Let $\gamma = (X,Y) \in \Cgoth(B)$.
If $\Cont(\Phi) = \emptyset$ then $\delta_\gamma(\Phi) = 1$ and the claim
is an immediate consequence of \ref{of82098u12bw9}.
So we may assume throughout that $\Cont(\Phi) \neq \emptyset$.
Under this assumption we shall prove that $\deg_A(F) < \deg_\gamma(F) / \delta_\gamma(\Phi)$.

Choose $C \in \Cont(\Phi)$ such that $\delta_\gamma(\Phi) = \deg_\gamma(C)$;
let $P$ be an irreducible element of $B$ such that $C = V(P)$.
Pick $\rho = (U,V) \in \Rec(F,A)$. 
Let us first prove that
\begin{equation}  \label {op929081271ys}
\min ( e_1 , e_2 ) \ge  \delta_\gamma(\Phi)
\end{equation}
where we define $e_1 =  \deg_\gamma U$ and $e_2 = \deg_\gamma V$.
Note that $\Phi$ is given by the formula $\Phi(x,y) = (U(x,y), V(x,y))$,
where we use coordinates $X,Y$ (resp.\ $U,V$) to identify the set of closed points of  $\Spec B$ (resp.\ $\Spec A$)
with $\bk^2$.
As $\Phi$ maps $V(P)$ to a point $(a,b)$, we have $P \mid \gcd_B(U-a,V-b)$.
In particular, 
\begin{equation} \label {pf2q9897ygsid712}
\min ( e_1 , e_2 ) = \min ( \deg_\gamma U, \deg_\gamma V ) \ge \deg_\gamma P = \deg_\gamma C = \delta_\gamma(\Phi) .
\end{equation}
So \eqref{op929081271ys} is proved.
Since $\rho \in \Rec(F,A)$, 
$$
\text{there exist $m,n \ge 1$ such that 
$(m,n) \in \supp_\rho(F) \subseteq \langle (0,0),(m,0),(0,n),(m,n) \rangle$.}
$$
We have $\deg_\gamma(U^mV^n) = me_1+ne_2$ and, for each $(i,j) \in \supp_\rho(F) \setminus \{(m,n)\}$,
$\deg_\gamma(U^iV^i) = ie_1+je_2 <  me_1+ne_2$; so 
\begin{equation} \label {5e43qwfgsdi09awe}
\deg_\gamma F = me_1+ne_2 \ge (m+n) \min(e_1,e_2) .
\end{equation}
Since $m+n = \deg_\rho(F) = \deg_A(F)$ by \ref{of82098u12bw9}, inequalities \eqref{op929081271ys}
and \eqref{5e43qwfgsdi09awe} imply 
\begin{equation} \label {8372yhs8d9a}
\deg_A(F) \le \frac{ \deg_\gamma(F) }{ \delta_\gamma(\Phi) } .
\end{equation}
Assume for a moment that equality holds in \eqref{8372yhs8d9a}.
Then equality must hold in \eqref{5e43qwfgsdi09awe} (so $e_1=e_2$) and in \eqref{pf2q9897ygsid712}
(so $\deg_\gamma P = \deg_\gamma U = \deg_\gamma V$). As  $P \mid \gcd_B(U-a,V-b)$, it follows that 
$P, U-a, V-b$ are associates, so $U,V$ are algebraically dependent, a contradiction.
So inequality \eqref{8372yhs8d9a} is strict, and the lemma is proved.
\end{proof}

\begin{notation}
Let $B=\kk2$ and $F\in B \setminus\bk$. Write $\Delta (F,B)=[d_1,\cdots,d_t]$ and
let $\Deul(F,B)$ be the set of nonempty subsets $I$ of $\{1, \dots, t\}$ satisfying
$\forall_{i \in I}( d_i \neq 1 )$ and $\gcd\setspec{ d_i }{ i \in I } = 1$.
Define:
$$
\delta(F,B) = \begin{cases}
\min\setspec{ \textstyle\sum_{i \in I}d_i }{ I \in \Deul(F,B) }, & \text{if $\Deul(F,B) \neq \emptyset$}; \\
\infty,  & \text{if $\Deul(F,B) = \emptyset$}.
\end{cases}
$$
\end{notation}

\begin{lemma}  \label {23h7232sqwadzcpydgb}
Let $F$ be a field generator of $B = \kk2$.
Then
$$
\delta(F,B) \le \deg_A(F) \le \deg_B(F) 
$$
for every $A$ satisfying $F \in A \preceq B$ and such that $F$ is a bad field generator of $A$.
\end{lemma}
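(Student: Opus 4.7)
The plan is to handle the two inequalities separately, relying respectively on Lemma~\ref{of82098u12bw9}(b) for the right-hand one and on the combination of Proposition~\ref{difja;skdfj;aksd}(a), Corollary~\ref{2384y872tqo8we1ftbs}, Observation~\ref{82364r5gg658943k} and Lemma~\ref{828dbvcdqscd51671w72} for the left-hand one.

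For the inequality $\deg_A(F) \le \deg_B(F)$: since $F$ is a bad field generator of $A$, it cannot be a variable of $A$, so Theorem~\ref{982393298urcnj092} gives that $F$ is a rectangular element of $A$. Lemma~\ref{of82098u12bw9}(b) then yields $\deg_A(F) \le \deg_B(F)$ (with equality only when $A=B$). This is the easy half.

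For the inequality $\delta(F,B) \le \deg_A(F)$: apply Proposition~\ref{difja;skdfj;aksd}(a) to the extension $A \preceq B$ and the morphisms $\Spec B \to \Spec A \to \Spec\bk[F]$, writing $\Delta(F,B) = [d_1,\dots,d_t]$ as the concatenation of $\Delta(F,A)$ with the degrees of the $f$-horizontal missing curves of $\Spec B \to \Spec A$. Let $I \subseteq \{1,\dots,t\}$ be the set of indices corresponding to the $\Delta(F,A)$-block, so $\Delta(F,A) = [d_i]_{i \in I}$. Since $F$ is a bad field generator of $A$, Observation~\ref{82364r5gg658943k} says that ``$1$'' does not occur in $\Delta(F,A)$, i.e.\ $d_i \neq 1$ for every $i \in I$. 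Since $F$ is a field generator of $A$, Corollary~\ref{2384y872tqo8we1ftbs} gives $\gcd\setspec{d_i}{i \in I} = 1$. Hence $I \in \Deul(F,B)$, which in particular shows $\Deul(F,B) \neq \emptyset$ and
$$
\delta(F,B) \;\le\; \sum_{i \in I} d_i .
$$
Lemma~\ref{828dbvcdqscd51671w72} applied to the pair $(F,A)$ gives $\deg_A(F) \ge \sum_{i \in I} d_i$, so combining the two estimates produces $\delta(F,B) \le \deg_A(F)$.

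There is really no serious obstacle: the proof is a direct assembly of the four cited results, and the only point that requires any care is observing that the $\Delta(F,A)$-block of $\Delta(F,B)$ furnishes an element of $\Deul(F,B)$, which is precisely what makes $\delta(F,B)$ finite and bounds it by the sum over that block.
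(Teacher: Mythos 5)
Your proof is correct and follows essentially the same route as the paper: the right-hand inequality via Lemma~\ref{of82098u12bw9} (after noting $F$ is rectangular in $A$ by Theorem~\ref{982393298urcnj092}), and the left-hand one by observing that the $\Delta(F,A)$-block of $\Delta(F,B)$ lies in $\Deul(F,B)$ thanks to \ref{82364r5gg658943k} and \ref{2384y872tqo8we1ftbs}, then bounding its sum by $\deg_A(F)$ via Lemma~\ref{828dbvcdqscd51671w72}. The only difference is that you make explicit the rectangularity hypothesis needed for \ref{of82098u12bw9}, which the paper leaves tacit.
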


\begin{proof}
Suppose that $F \in A \preceq B$ and  that $F$ is a bad field generator of $A$.
We have $\deg_A(F) \le \deg_B(F)$ by \ref{of82098u12bw9}.
By \ref{difja;skdfj;aksd}, $\Delta(F,A)$ is a sublist of $\Delta(F,B)$; so we may write
$\Delta(F,A) = [d_1, \dots, d_s]$
and $\Delta(F,B) = [d_1, \dots, d_t]$ where $s\le t$.
Since $F$ is a bad field generator of $A$, we have
$\forall_{i \in \{1,\dots,s\} }( d_i \neq 1 )$ by \ref{82364r5gg658943k}
and $\gcd(d_1, \dots, d_s)=1$ by \ref{2384y872tqo8we1ftbs},
so $\{1, \dots, s\} \in \Deul(F,B)$ and hence $\sum_{i=1}^s d_i \ge \delta(F,B)$.
Since $\deg_A(F) \ge \sum_{i=1}^s d_i$ by \ref{828dbvcdqscd51671w72}, we are done.
\end{proof}

\begin{notation}  \label {o273e27367wey128}
Let $F \in B \setminus \bk$, where $B = \kk2$. For each $\gamma \in \Cgoth(B)$ we define
$$
\subdeg_\gamma(F) = \min\setspec{ \deg_\gamma(P) }{ P \in \Peul }
$$
where $\Peul$ denotes the set of irreducible elements $P$ of $B$ such that
$P \mid (F-\lambda)$ in $B$ for some $\lambda \in\bk$.
\end{notation}

\begin{corollary} \label {last}
Assume that $\bk$ is algebraically closed and
let $F$ be a field generator of $B = \kk2$.
If there exists $\gamma \in \Cgoth(B)$ satisfying
$$
\delta(F,B)\ \ge\ \frac{\deg_\gamma(F)}{\subdeg_\gamma(F)}
$$
then exactly one of the following conditions is satisfied:
\begin{enumerata}

\item \label {BFGL} $F$ is a bad field generator of $B$ and is lean in $B$;

\item  \label {VG000000} $F$ is a very good field generator of $B$.

\end{enumerata}
\end{corollary}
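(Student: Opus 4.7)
The two alternatives (a) and (b) are mutually exclusive: every very good field generator of $B$ is, by definition, a good field generator of $B$, while every lean field generator of $B$ is bad in $B$, by the contrapositive of Lemma \ref{dg72576wqs81128753d}. Thus it suffices to show that if $F$ is not a very good field generator of $B$, then $F$ is bad and lean in $B$.

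Assume $F$ is not very good. By Proposition \ref{888dfgdzb5n3d2ff}, $F$ admits a lean factorization: there exists $A$ with $F \in A \preceq B$ and $F$ lean in $A$; by Lemma \ref{dg72576wqs81128753d}, $F$ is a bad field generator of $A$. The plan is to show $A = B$, which then gives~(a). Let $\Phi : \Spec B \to \Spec A$ be the morphism induced by $A \subseteq B$ and suppose, for contradiction, that $A \neq B$; then $\Cont(\Phi) \neq \emptyset$. Combining Lemma \ref{23h7232sqwadzcpydgb} (applied to the bad field generator $F$ of $A$) with Lemma \ref{m93919gdqwd8719}, I obtain
$$
\delta(F,B)\ \le\ \deg_A(F)\ <\ \frac{\deg_\gamma(F)}{\delta_\gamma(\Phi)}.
$$

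The heart of the argument is then to prove that
$$
\delta_\gamma(\Phi)\ \ge\ \subdeg_\gamma(F).
$$
For this I would fix any $C \in \Cont(\Phi)$, write $C = V(Q)$ with $Q \in B$ irreducible, and note that since $\bk$ is algebraically closed, $\Phi(C)$ is a closed point $p$ of $\Spec A$. Setting $\lambda = F(p) \in \bk$, every closed point of $C$ lies over $p$; hence, by the Nullstellensatz, $\mgoth_p \subseteq (Q)$ in $B$. Since $F - \lambda \in \mgoth_p$, this yields $Q \mid (F - \lambda)$ in $B$, so $Q$ lies in the set $\Peul$ of Notation \ref{o273e27367wey128}, and therefore $\deg_\gamma(C) = \deg_\gamma(Q) \ge \subdeg_\gamma(F)$. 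Taking the maximum over $C \in \Cont(\Phi)$ gives the claimed inequality. Chaining the two displays now yields $\delta(F,B) < \deg_\gamma(F)/\subdeg_\gamma(F)$, contradicting the hypothesis. Hence $A = B$, so $F$ is lean in $B$, and therefore bad in $B$ by \ref{dg72576wqs81128753d}.

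I expect the only substantive obstacle to be the identification $\delta_\gamma(\Phi) \ge \subdeg_\gamma(F)$. Once one recognizes that each contracting curve of $\Phi$ is cut out by an irreducible factor of some $F - \lambda$, $\lambda \in \bk$, the remainder of the proof reduces to the short chain of inequalities provided by Lemmas \ref{23h7232sqwadzcpydgb} and \ref{m93919gdqwd8719}, together with Proposition \ref{888dfgdzb5n3d2ff}.
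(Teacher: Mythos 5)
Your proposal is correct and follows essentially the same route as the paper: the same contradiction via the chain $\delta(F,B)\le\deg_A(F)<\deg_\gamma(F)/\delta_\gamma(\Phi)\le\deg_\gamma(F)/\subdeg_\gamma(F)$, using \ref{23h7232sqwadzcpydgb}, \ref{m93919gdqwd8719}, and the observation that every contracting curve is a component of a fibre of $F$ (your Nullstellensatz argument just spells out what the paper cites). The only cosmetic difference is that you produce the subring $A$ via the lean factorization of \ref{888dfgdzb5n3d2ff}, whereas the paper takes any $A$ in which $F$ is bad and shows the set of such $A$ reduces to $\{B\}$.
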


\begin{proof}
Assume that there exists a $\gamma$ as in the above statement and that condition \eqref{VG000000} is not satisfied.
Then $E \neq \emptyset$, where we define
$$
E = \setspec{ A }{ \text{$F \in A \preceq B$ and $F$ is a bad field generator of $A$ }}.
$$
We claim that $E = \{B\}$.  Indeed, assume the contrary.
Because $E \neq \emptyset$, there exists an $A \in E$ satisfying $A \neq B$.
Then the birational morphism $\Phi : \Spec B \to \Spec A$ is not an isomorphism;
so (cf.\ \cite[2.6(b)]{CassouDaigBir}) $\Cont(\Phi) \neq \emptyset$ and each element of $\Cont(\Phi)$ is 
an irreducible component of a fiber of $\Spec B \to \Spec \bk[F]$;
consequently, $\delta_\gamma(\Phi) \ge \subdeg_\gamma(F)$.
By the assumption, it follows that $\delta(F,B) \ge \deg_\gamma(F)/ \delta_\gamma(\Phi)$.
Note that $F$ is a rectangular element of $A$, because it is a bad field generator of $A$.
So by \ref{m93919gdqwd8719} we get $\deg_\gamma(F)/ \delta_\gamma(\Phi) > \deg_A(F)$,
so $\deg_A(F) < \delta(F,B)$,  which contradicts \ref{23h7232sqwadzcpydgb}.

This contradiction shows that $E=\{B\}$ and hence that \eqref{BFGL} holds.
\end{proof}

The following is a technical lemma that we need in \ref{26tsgfvbxnbcbhjdhbkizey},
\ref{p92839819ueg87gdywfe6} and \ref{pf98287217ys82}.
We say that an element $r$ of a ring $R$ ``is a power'' if $r = r_0^k$ for some
$r_0 \in R$ and $k>1$.

\begin{lemma}  \label {783e826gehfood0293}
Assume that $\bk$ is algebraically closed and that $F$ is a rational polynomial of $A = \kk2$.
Let $d=\dic(F,A)$, let $t_1, \dots, t_{d-1}$ be distinct elements of $\bk$
and suppose that, for each $i \in \{1, \dots, d-1\}$, we have a factorization
$F-t_i = G_iH_i$ with $G_i,H_i \in A \setminus \bk$.
Moreover, assume that for each $i \in \{1, \dots, d-1\}$
there exists $(X,Y) \in \Cgoth(A)$ such that
one of the following conditions is satisfied:

\begin{enumerate}

\item[(i)]
$G_i(X,0)\notin \bk$ and $G_i(X,0)$ is not a power,
$H_i(X,0)\notin \bk$ and $H_i(X,0)$ is not a power,
and $\gcd (G_i(X,0),H_i(X,0))=1$;

\item[(ii)]
$G_i(X,0)\in\bk^*$, $H_i(X,0)\notin \bk$ and $H_i(X,0)$ is not a power,
$G_i(0,Y)\notin \bk$ and $G_i(0,Y)$ is not a power, and $H_i(0,Y)\in\bk^*$.

\end{enumerate}
Then $\setspec{ t \in \bk }{ \text{$F-t$ is reducible in $A$} } = \{ t_1, \dots, t_{d-1} \}$ and
$$
\text{$F-t_i = G_iH_i$ is the prime factorization of $F-t_i$ in $A$}
$$
for each $i \in \{1, \dots, d-1\}$.
\end{lemma}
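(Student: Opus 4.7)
My plan is to deduce the lemma from a standard counting identity for rational polynomials. For each $t\in\bk$, let $c(t)$ denote the number of distinct prime factors of $F-t$ in $A$, so that $c(t)\ge 2$ if and only if $F-t$ is reducible. The key input is the inequality
$$
\sum_{t\in\bk}\bigl(c(t)-1\bigr)\ \le\ d-1,
$$
valid for any rational polynomial $F$ with $\dic(F,A)=d$. One proof of this inequality proceeds via Suzuki's Euler-characteristic formula applied to the morphism $\aff^2\to\aff^1$ induced by $F$, whose generic fibre is a $\proj^1$ with $d$ points removed; equivalently, it follows from analyzing the fibred surface $\bar f\colon X\to\proj^1$ obtained by resolving the pencil at infinity and counting contributions to the Picard group.

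Granting this inequality, the first assertion is immediate. The hypothesis provides $d-1$ distinct $t_i$ with $F-t_i=G_iH_i$ and $G_i,H_i\notin\bk$, hence $c(t_i)\ge 2$ for each $i$; so $\sum_t(c(t)-1)\ge d-1$, and equality must hold throughout. Therefore $c(t_i)=2$ for every $i$ and $c(t)=1$ for every $t\in\bk\setminus\{t_1,\dots,t_{d-1}\}$; that is, the reducible values of $F$ are exactly $t_1,\dots,t_{d-1}$.

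For the second assertion, fix $i$ and write $F-t_i=P^aQ^b$ with $P,Q$ distinct primes of $A$ and $a,b\ge 1$ (legitimate since $c(t_i)=2$). Unique factorization in $A$ gives $G_i=c\,P^{\alpha}Q^{\beta}$ and $H_i=c^{-1}P^{a-\alpha}Q^{b-\beta}$ for some $c\in\bk^*$, $0\le\alpha\le a$, $0\le\beta\le b$, with $(\alpha,\beta)\notin\{(0,0),(a,b)\}$. Picking the coordinate system $(X,Y)$ provided by (i) or (ii) and specializing to $Y=0$ (and in case (ii) also to $X=0$), I reduce to a factorization problem in $\bk[X]$ (and $\bk[Y]$). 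In case (i), the hypotheses force first $P(X,0),Q(X,0)\notin\bk$ (otherwise one of $G_i(X,0)$, $H_i(X,0)$ would lie in $\bk$ or fail the coprimality), and then the coprimality $\gcd(G_i(X,0),H_i(X,0))=1$ prevents $P(X,0)$ and $Q(X,0)$ from being split between $G_i$ and $H_i$; thus after reordering $G_i\leftrightarrow H_i$ we obtain $G_i=cP^a$ and $H_i=c^{-1}Q^b$. Since $\bk$ is algebraically closed, the hypothesis that $G_i(X,0)=cP(X,0)^a$ is not a power in $\bk[X]$ forces $a=1$; the analogous argument for $H_i(X,0)$ gives $b=1$. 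Case (ii) is handled by a parallel argument in which the specialization $Y=0$ pins down how $P$ is distributed between $G_i$ and $H_i$ (using $G_i(X,0)\in\bk^*$ and $H_i(X,0)$ non-constant and not a power) while the complementary specialization $X=0$ pins down $Q$ analogously.

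The main difficulty is establishing the dicritical--reducibility inequality of the first paragraph in the precise generality required, namely for rational polynomials over an algebraically closed field, where one must be attentive to the distinction between the set-theoretic and scheme-theoretic fibres (the lemma counts only distinct prime factors, not multiplicities). Once that bound is granted, the remainder of the argument is elementary unique factorization bookkeeping, rendered transparent by the generous hypotheses (i) and (ii).
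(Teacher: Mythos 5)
Your overall strategy is the same as the paper's: the counting identity $\sum_{t\in\bk}(n_t-1)=d-1$ for a rational polynomial with $d$ dicriticals (the paper quotes this as an equality from \cite[1.11]{Dai:GenRatPols}), forcing $n_{t_i}=2$ and $n_t=1$ elsewhere, followed by unique-factorization bookkeeping in which the specializations $Y=0$ (and $X=0$) pin down how the two primes are distributed between $G_i$ and $H_i$ and the ``not a power'' hypotheses force the exponents to equal $1$. That part of your write-up is fine.

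There is, however, one genuine gap: your opening assertion that ``$c(t)\ge 2$ if and only if $F-t$ is reducible'' is false as stated, since a fiber of the form $F-t=P^k$ with $k\ge 2$ is reducible but has $c(t)=1$. This matters in two places. First, to conclude that $F-t$ is irreducible for $t\notin\{t_1,\dots,t_{d-1}\}$ from $c(t)=1$, you must rule out $F-t=P^k$; the paper does this by observing that $F-t=P^k$ would give $F-s=P^k-(s-t)=\prod_j(P-\zeta_j)$ reducible for \emph{every} $s$ (as $\bk$ is algebraically closed), contradicting the definition of a rational polynomial. Second, your inference ``$F-t_i=G_iH_i$ with $G_i,H_i\notin\bk$, hence $c(t_i)\ge 2$'' is also a non sequitur on its face ($G_i$ and $H_i$ could a priori be powers of the same prime); here one must invoke hypothesis (i) (the coprimality of the specializations) or (ii) (one specialization constant, the other not) to exclude that possibility. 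Both repairs are short, but as written the first conclusion of the lemma is not established.
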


\begin{proof}
For each $t \in \bk$,
denote by $n_t$ the number of irreducible components of the closed subset ``$F=t$'' of $\Spec A$.
Then (see, e.g., \cite[1.11]{Dai:GenRatPols}), 
since $\bk$ is algebraically closed and $F$ is a rational polynomial of $A = \kk2$,
\begin{equation}  \label {10292hbfebzvv25674782eruer9} 
\textstyle d-1 = \sum _{t\in \bk} (n_t-1) .
\end{equation}
Let $S = \setspec{ t \in \bk }{ \text{$F-t$ is reducible in $A$} }$ and
note that if $t \in S$ then $n_t>1$
(otherwise $F-t$ would be a power, so $F-s$ would be reducible for all $s \in \bk$,
which would contradict the definition of rational polynomial).
Then formula \eqref{10292hbfebzvv25674782eruer9} implies that
$S = \{ t_1, \dots, t_{d-1} \}$ and
that $n_t=2$ for all $t \in S$.

Fix $i \in \{1, \dots, d-1\}$; let us prove that $G_i,H_i$ are irreducible. Since $n_{t_i}=2$,
there exist irreducible $P_1,P_2 \in A$ such that
$G_i=P_1^{k_1}P_2^{k_2}$ and $H_i=P_1^{l_1}P_2^{l_2}$ with $k_1+k_2\geq 1$ and $l_1+l_2\geq 1$.
Interchanging $P_1,P_2$ if necessary, we may arrange that $k_1 \ge 1$.
Choose $(X,Y) \in \Cgoth(A)$ such that (i) or (ii) holds.
Given $M(X,Y) \in \bk[X,Y]=A$, write $\overline{M}=M(X,0)$ and $\underline{M} = M(0,Y)$.
We have $\overline G_i=\overline P_1^{k_1}\overline P_2^{k_2}$
and similarly for $\underline{G_i}$, $\overline H_i$, $\underline{H_i}$.

Assume that (ii) holds.
Then $\overline G_i\in\bk^*$, $\underline{G_i}\notin\bk$, $\overline H_i\notin\bk$ and $\underline{H_i}\in\bk^*$, so:
$$
\xymatrix @C=3em{
k_1>0
\ar @{=>} [r]^-{\overline G_i\in\bk^*}
& \overline{P_1}\in\bk^* 
\ar @{=>} [r]^-{\overline H_i\notin\bk}
\ar @{=>} [d]_(.45){\overline H_i\notin\bk}
& \overline{P_2}\notin\bk
\ar @{=>} [r]^-{\overline G_i\in\bk^*}
& k_2=0
\\
&  l_2>0  
\ar @{=>} [r]^-{\underline{H_i}\in\bk^*}
& {\underline{P_2}\in\bk^*}
\ar @{=>} [r]^-{\underline{G_i}\notin\bk}
& {\underline{P_1}\notin\bk}
\ar @{=>} [r]^-{\underline{H_i}\in\bk^*}
& l_1=0,
}
$$
so $G_i=P_1^{k_1}$ and $H_i=P_2^{l_2}$.
Since $\underline{G_i}$ and $\overline{H_i}$ are not powers we get $k_1=1=l_2$, so $G_i,H_i$ are irreducible
(in case (ii)).
The argument in case (i) is left to the reader.
\end{proof}

The following examples construct field generators and use \ref{last} to establish  their properties.
The relevance of Example~\ref{26tsgfvbxnbcbhjdhbkizey} is explained in \ref{o832bi87di2837qtduj}.

\begin{example}  \label {26tsgfvbxnbcbhjdhbkizey}
Define $F \in A = \bk[x,y]$ by
\begin{multline*}
F (x,y)= 2+x+27x^9y^3+2y+80x^4y^2+9yx+16yx^2+91x^5y^2+14x^3y \\
+12x^{20}y^7
+ 75x^{17}y^6+196x^{14}y^5+274x^{11}y^4+8x^{28}y^{10}+73x^{25}y^9+296x^{22}y^8 \\
+700x^{19}y^7+1064x^{16}y^6
+ 1078x^{13}y^5+728x^{10}y^4+316x^7y^3+32x^6y^2+8x^{12}y^4 \\
+216x^8y^3+2x^{36}y^{13}+24x^{33}y^{12}
+ 132x^{30}y^{11}+440x^{27}y^{10}+990x^{24}y^9+1584x^{21}y^8 \\
+1848x^{18}y^7+1584x^{15}y^6+990x^{12}y^5+440x^9y^4 + 132x^6y^3+24x^3y^2+2x^4y .
\end{multline*}
The polynomial $F$ is a rectangular polynomial of bidegree $(36,13)$.
We claim:
\begin{equation}  \label {0182x9238729gcbrnxsh}
\textit{$F$ is a very good field generator of $A$ with $\Delta(F,A) = [1,3,4]$.}
\end{equation}

First note that the Newton polygon of $F$ has $3$ faces with face polynomial
$f_1(x,y)=y(x^3y+1)^{12}$, $f_2(x,y)=x^4y(x^8y^3+1)^4$ and $f_3(x,y)=x+x^4y$.
The last one produces a dicritical of degree $1$. We study the two other.

Let $F_h(x,y,z)=(F(x/z,y/z)-t)z^{49}$.

We first consider $F_h(x,1,z)$ and we apply the blowups $x\to xz$ divided by $z^{36}$, $z\to zx^2$ divided by $x^{24}$ and $x \to xz$ divided by $z^{12}$ and then the change 
$x\to x-1$. We get 
$$2x^{12}-x^8z+x^4z^2+(1-t)z^3+\sum a_{\alpha_1,\alpha_2}x^{\alpha_1}z^{\alpha_2}$$
with $\alpha_1+4\alpha_2>12$. Then the face with face polynomial $f_1$ produces a dicritical of degree $3$.

Now consider $F_h(1,y,z)$ and apply $y\to yz^3$ divided by $z^{39}$, $z\to zy$ divided by $y^9$ and $y\to yz^2$ divided by $z^8$. Apply the change $y\to y-1$. We get
$$2y^4-23y^2z+100y^2z^2-194yz^3+(t+140)z^3+\sum b_{\alpha_1,\alpha_2}x^{\alpha_1}z^{\alpha_2}$$
with $\alpha_1+\alpha_2>4$, which proves that we have a dicritical of degree $4$.
So $\Delta(F,A) = [1,3,4]$.
The Newton tree at infinity of this polynomial is shown in Figure~\ref{87234856fyuhduweyr}.

\begin{figure}[ht]
\begin{center}
\includegraphics{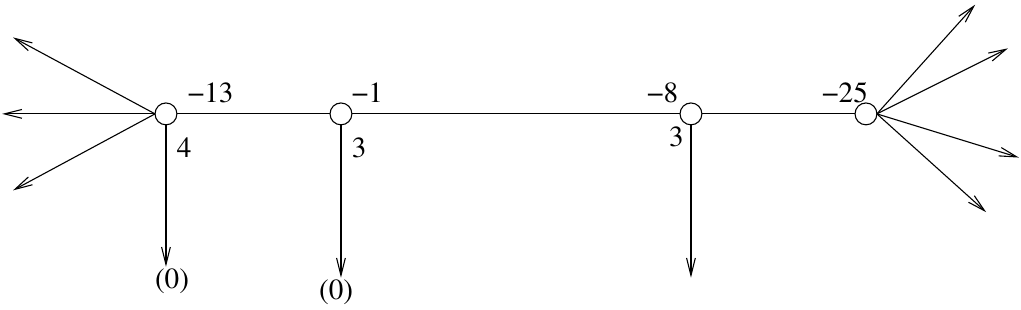}
\caption{}
\label {87234856fyuhduweyr}
\end{center}
\end{figure}  

It is easy to check that $F$ is a rational polynomial (hence a field generator) of $A$.

Two fibers of $F$ are reducible namely, let
$$
h_1(x,y)= x+2+8x^9y^3+8x^3y+12x^6y^2+2x^{12}y^4+2x^4y ,
$$
\begin{multline*}
h_2(x,y)= x^{24}y^9+8x^{21}y^8+28x^{18}y^7+3x^{16}y^6+ 56x^{15}y^6+16x^{13}y^5 \\
+70x^{12}y^5+34x^{10}y^4
+ 56x^9y^4+3x^8y^3+36x^7y^3+28x^6y^3+8x^5y^2 \\
+19x^4y^2+8x^3y^2+6yx^2+4yx+1+y ,
\end{multline*}
$$
h_3(x,y)= x^9y^3+1+x+3x^3y+3x^6y^2 ,
$$ 
\begin{multline*}
h_4(x,y)= 2x^{27}y^{10}+18x^{24}y^9+72x^{21}y^8+6x^{19}y^7+168x^{18}y^7+37x^{16}y^6\\
+ 252x^{15}y^6+95x^{13}y^5+252x^{12}y^5+6x^{11}y^4+130x^{10}y^4+168x^9y^4 + 20x^8y^3 \\
+100x^7y^3+72x^6y^3+23x^5y^2+41x^4y^2+18x^3y^2+2x^3y+9yx^2+7yx+1+2y .
\end{multline*}

We have $F(x,y)=h_1(x,y)h_2(x,y)$ and $F(x,y)-1=h_3(x,y)h_4(x,y)$; then \ref{783e826gehfood0293}(ii) implies
that these are the prime factorizations of $F$ and $F-1$ respectively, and that
$F-t$ is irreducible for all $t \in \bk\setminus\{0,1\}$.
So $\subdeg_\gamma(F) = 12$ (where $\gamma = (x,y)$) and 
$$
\delta(F,A) = 7 > \frac{49}{12} = \frac{\deg_\gamma(F)}{\subdeg_\gamma(F)} .
$$
By \ref{last}, $F$ is a very good field generator of $A$, so \eqref{0182x9238729gcbrnxsh} is proved.
\end{example}

\begin{example}  \label {p92839819ueg87gdywfe6}
Let $A = \bk[v,w] = \kk2$.
Define $P, Q \in A$ by
\begin{multline*}
P=v^4(vw-1)^{14}+w^2(v^3w^2+1)^6-v^{18}w^{14} -70v^{14}w^{11}+374v^{13}w^{10} \\
+(-140v^{11}-1210v^{12})w^9+(588v^{10}+2640v^{11})w^8 \\
+(-140v^8-1464v^9-4092v^{10})w^7 +(430v^7+2394v^8+4620v^9)w^6\\
+(-70v^5-757v^6-2688v^7-3828v^8)w^5 +(2100v^6+137v^4+2310v^7+835v^5)w^4\\
+(-1128v^5-990v^6-590v^4-14v^2-142v^3)w^3 +(260v^3+81v^2+286v^5+399v^4+12v)w^2\\
+(-84v^3-50v^4-24v-65v^2-3)w+3+7v+8v^2+4v^3,
\end{multline*}
\begin{multline*}
Q=v^5(vw-1)^{13}+w(v^3w^2+1)^6-v^{18}w^{13} -65v^{14}w^{10}+320v^{13}w^9 \\
+(-130v^{11}-945v^{12})w^8 +(502v^{10}+1860v^{11})w^7+(-2562v^{10}-130v^8-1134v^9)w^6\\
+(1652v^8+366v^7+2520v^9)w^5+(-578v^6-1770v^8-65v^5-1610v^7)w^4\\
+(552v^5+1050v^6+116v^4+870v^7)w^3+(-318v^4-104v^3-285v^6-13v^2-442v^5)w^2\\+(102v^3+109v^4+46v^2+10v+56v^5)w-1-8v-12v^3-5v^4-14v^2 .
\end{multline*}
Let
$$
F=PQ \in A.
$$
Then $F$ is a rectangular polynomial of degree $63$. 
Its Newton tree at infinity is in Figure 5.
\begin{figure}[ht]
\begin{center}
\includegraphics{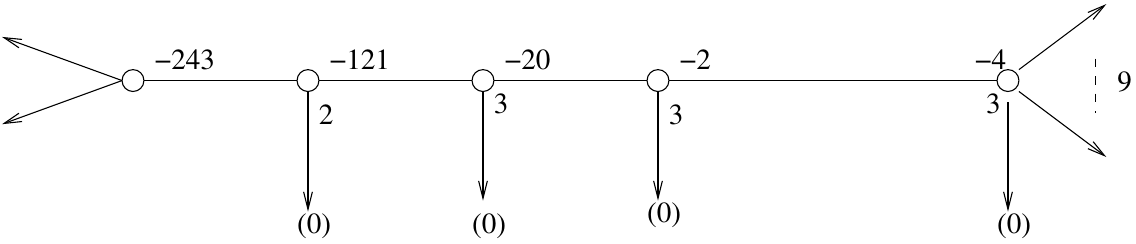}
\caption{}
\label {fg1d2t23e6778fhj12dfsf}
\end{center}
\end{figure}  
It follows that $F$ is a rational polynomial (hence a field generator) of $A$ with $\Delta (F,A)=[9,2]$.
We claim:
\begin{equation} \label {9023ggzxnmxcu9d8ody823}
\textit{$F$ is a very bad field generator of $A$ that is lean in $A$.}
\end{equation}
Indeed, we have $\Galg(F,A)\subseteq \{ (v),(w) \}$ by \ref{09239023r02n02b27c2c8h2}(b);
as $\deg F(v,0)=9$ and $\deg F(0,w)=3$,
\ref{09239023r02n02b27c2c8h2}(a) gives $\Galg(F,A) = \emptyset$.
Then $F$ is a very bad field generator of $A$ by \ref{88x889adb823mdnfv}.

Lemma~\ref{783e826gehfood0293}(i) implies that $F-t$ is irreducible for every $t \in \bk^*$ and that $F=PQ$
is the prime factorization of $F$;
thus $\subdeg_\gamma(F) = \deg_\gamma(Q) = 31$ (where $\gamma=(v,w)$) and 
$$
\delta(F, A ) = 11 > \frac{63}{31} = \frac{\deg_\gamma(F)}{\subdeg_\gamma(F)}.
$$
So \ref{last} implies that $F$ is lean in $A$, i.e., \eqref{9023ggzxnmxcu9d8ody823} is proved.

\end{example}

\begin{example}[Continuation of \ref{2983y872613765xrenucu}]  \label {pf98287217ys82}
We showed in \ref{2983y872613765xrenucu} that the polynomials $F_{CND}$ are bad field generators of $\bk[X,Y]$.
We now prove:
$$
\textit{The polynomials $F_{CND}$ are lean in $\bk[X,Y]$.}
$$

Let $\phi(X)=\prod(X-a_i)^{m_i}$. Define $P(X,Y)=X^{n-1}Y+\tilde{\phi}(X^{n-2}Y).$ We have
$$F_{CND}(X,Y)=\frac{1}{\tilde{\phi}(X^{n-2}Y)}(X+YP^{n-1}(X,Y)(c_{n-1}(\frac{X}{P(X,Y)})^{n-1}+\cdots+c_0))$$
$$F_{CND}(X,Y)=\frac{1}{\tilde{\phi}(X^{n-2}Y)}(X+YP^{n-1}(X,Y)(\phi(\frac{X}{P(X,Y)})-(\frac{X}{P(X,Y)})^n))$$
$$F_{CND}(X,Y)=\frac{Y}{\tilde{\phi}(X^{n-2}Y)P(X,Y)} \prod(X-a_iP(X,Y))^{m_i}-\frac{X}{P(X,Y)}$$
Let $a\in\bk$ be a root of the polynomial $\phi$ (observe that $a \neq 0$).  Then
$$
F_{CND}-a = R_a Q_a
$$
where $R_a = X-aP(X,Y)$ and $Q_a \in \bk[X,Y]$.
Note that $\deg_\gamma(R_a) = n(n-1) < \deg_\gamma(Q_a)$ ($\gamma=(X,Y)$). Also,
\begin{equation*}
\begin{array}{lllll}
F(X,0)-a=X-a &\text{and}  & R_a(X,0)=X-a  & \implies & Q_a(X,0)=1
\\
F(0,Y)-a=c_0Y-a &\text{and} & R_a(0,Y)=-a & \implies & Q_a(0,Y)= (-c_0/a)Y+1 .
\end{array}
\end{equation*}
By \ref{783e826gehfood0293}(ii), we find that $\setspec{ t \in \bk }{ \text{$F_{CND}-t$ is reducible} }$
is the set of roots of $\phi$ and that, for each root $a$, $F_{CND}-a = R_a Q_a$ is
the prime factorization of $F_{CND}-a$.
Consequently, $\subdeg_\gamma(F_{CND}) = n(n-1)$. Then
$$
\delta(F_{CND}, \bk[X,Y]) = 2n-3 \ge \frac{n(n-1)(n-2)+1}{n(n-1)} =
\frac{\deg_\gamma(F_{CND})}{\subdeg_\gamma(F_{CND})} ,
$$
so \ref{last} implies that $F_{CND}$ is lean in $\bk[X,Y]$.
\end{example}

\begin{remark}
Let $A = \kk2$.
Given $F \in A \setminus\bk$, let us use the temporary notation 
$$
m(F,A) = \min( t, d_1, \dots, d_t ) \quad \text{where $\Delta(F,A) = [d_1, \dots, d_t]$}.
$$
Then \ref{2983y872613765xrenucu} and \ref{pf98287217ys82} show that the set
$$
\setspec{ m(F,A) }{ \text{$F$ is a bad field generator of $A$ which is lean in $A$} }
$$
is not bounded.
\end{remark}

\begin{remark}
The polynomial\footnote{Let $g$ be the element of the family $\{F_R\}$ of \ref{872349726edh83uw} obtained
by setting $a_0=a_1=a_2=0$; then $f(x,y)=g(y,x)$.}
$f(x,y)=x((x^2y^3+1)^2+xy^2)^2$ is a bad field generator of degree $21$
in $B=\bk[x,y]$ with $\Delta(f,B) =[2,3]$. It has the same Newton tree as Russell's polynomial.
It is a lean bad field generator such that
$$
\textstyle
\delta(f,B) = 5 < 21 = \frac{ \deg_\gamma(f) }{ \subdeg_\gamma(f) } \quad \text{where $\gamma = (x,y)$}.
$$
Now the same $f$ is a bad field generator of  $B' = k[x,y/x]$ of degree $33$,
with $\Delta(f,B') =[2,3]$, and which is not lean in $B'$.
So there exist bad field generators with two dicriticals and which are not lean,
and there exist lean field generators $f$ of $B$ satisfying
$$
\textstyle
\delta(f,B) < \frac{ \deg_\gamma(f) }{ \subdeg_\gamma(f) } .
$$
One has to note that in those two cases the reducible fiber is not reduced.

\end{remark}

\bigskip
\bigskip

\begin{center}
{\large\bf Part II: Birational endomorphisms of the affine plane}
\end{center}

\bigskip
Throughout Part~II, $\bk$ denotes an algebraically closed field of arbitrary characteristic
and $\aff^2 = \aff^2_\bk$ is the affine plane over $\bk$.

\section{Introduction}
\label {IntroPartII78364174}

By a {\it birational endomorphism of $\aff^2$}, we mean a
birational morphism from $\aff^2$ to $\aff^2$ (cf.\ \ref{xvscgdfhh82927475}).
The set $\Bir( \aff^2 )$ of birational endomorphisms of $\aff^2$
is a monoid under composition of morphisms,\footnote{In fact $\BirA$ is a {\it cancellative\/} monoid,
since it is included in the Cremona group of $\proj^2$.}
and the group of invertible elements
of this monoid is the automorphism group $\Aut(\aff^2)$ of $\aff^2$.
An element $f$ of $\BirA$ is {\it irreducible\/} if it is not invertible and,
for every factorization $f = h \circ g$ of $f$ with $g,h \in \BirA$, one of $g,h$ is invertible.

The birational morphism
$c : \aff^2 \to \aff^2$, $c(x,y) = (x,xy)$, is an example of a non-invertible element of $\BirA$.
One can ask:
\begin{equation} \tag{$*$}
\textit{Is $\AutA \cup \{c\}$ a generating set for the monoid $\BirA$?}
\end{equation}
That question was posed in Abhyankar's seminar at Purdue University in the early 70s,
and was given a negative answer by P.\ Russell, who gave
an example (which appeared later in \cite[4.7]{Dai:bir}) of an irreducible element of $\BirA$
which is not of the form $u \circ c \circ v$ with $u,v \in \AutA$.

The above question $(*)$ and its answer eventually gave rise to further studies of the monoid $\BirA$.
The reader is referred to \cite{CassouDaigBir} for a summary of the state of knowledge and a bibliography 
of this subject.
Our aim, here, is to briefly review some results of \cite{CassouDaigBir} that are directly related to question~$(*)$.
Section~\ref{SEC:Irreducibleelementsandgeneratingsets} gathers some observations that show that if $S$ is
any subset of $\BirA$ such that $\AutA \cup S$ generates $\BirA$, then $S$ has to be large.
Section~\ref{SomepropertiesofAeulinBirA}
reviews what is known about the submonoid $\Aeul$ of $\BirA$ generated by the set $\AutA \cup \{c\}$.

\medskip
Let us give some definitions, notations, and facts.

\medskip
Elements $f,g \in \BirA$ are said to be {\it equivalent\/} ($f \sim g$) if $f = u \circ g \circ v$ for
some $u,v \in \AutA$. This is indeed an equivalence relation on the set $\BirA$, but keep in mind that
the conditions $f \sim f'$ and $g \sim g'$ do not imply that $g \circ f \sim g' \circ f'$.

We write $[f]$ for the equivalence class of an element $f$ of $\BirA$.

Let $f \in \BirA$.
Recall from \ref{xvscgdfhh82927475} that $f$ has finitely many contracting curves and missing curves.
Let $c(f)$ (resp.\ $q(f)$) be the number of contracting (resp.\ missing) curves of $f$.
Clearly, if $f \sim g$ then $c(f)=c(g)$ and $q(f) = q(g)$.
By \cite[4.3(a)]{Dai:bir}, $c(f)=q(f)$ for every $f \in \BirA$.

Consider the map $f \mapsto n(f)$, from $\BirA$ to $\Nat$, defined in \cite[1.2(a)]{Dai:bir}
or \cite[2.3]{CassouDaigBir}.
Then  \cite[2.12]{Dai:bir} shows that
\begin{equation} \label {nm21m1j2uo87f7g4h2aaahklvxtw}
\text{$n : \BirA \to (\Nat,+)$ is a homomorphism of monoids}
\end{equation}
and it is pointed out in \cite[2.6(b)]{CassouDaigBir} that 
\begin{equation} \label {x90zdx7ffyrgytd2}
\AutA = \setspec{ f \in \BirA }{ n(f)=0 } .
\end{equation}
Statements \eqref{nm21m1j2uo87f7g4h2aaahklvxtw} and \eqref{x90zdx7ffyrgytd2} immediately imply that
each non-invertible element of $\BirA$ is a composition of irreducible elements, i.e.,
\begin{equation} \label {12xzx3xc5xv46h5bjyiutuql}
\text{the monoid $\BirA$ has factorizations into irreducibles.}
\end{equation}
Facts (\ref{nm21m1j2uo87f7g4h2aaahklvxtw}--\ref{12xzx3xc5xv46h5bjyiutuql})
were known in the time of \cite{Dai:bir} and \cite{Dai:trees}, but were not stated explicitly.
Essentially nothing is known regarding uniqueness of factorizations.

In view of \eqref{12xzx3xc5xv46h5bjyiutuql}, it is natural to ask whether one can list all irreducible
elements of $\BirA$ up to equivalence.  However, various examples and facts indicate
that $\Bir( \aff^2 )$ contains a great diversity of irreducible elements of arbitrarily high complexity,
and this suggests that the task of finding all of them may be hopeless.
In this regard, let us mention that \cite[4.12]{Dai:bir} implies in particular:
$$
\setspec{ n(f) }{ \text{$f$ is an irreducible element of $\BirA$} } = \Nat\setminus\{0\}.
$$
The results reviewed in Section~\ref{SEC:Irreducibleelementsandgeneratingsets} strengthen the
impression that there are too many irreducible elements to describe them.
It therefore makes sense to turn our attention, as we do in Section~\ref{SomepropertiesofAeulinBirA},
to other types of questions regarding the structure of the monoid $\BirA$.

\section{Irreducible elements and generating sets}
\label {SEC:Irreducibleelementsandgeneratingsets}

The results of this section show that question $(*)$, stated in the introduction to Part~II,
has a ``very negative'' answer.
The first three results are 4.1--4.3 of \cite{CassouDaigBir}.
The proof of the first one is based on Example 4.13 of \cite{Dai:bir},
which constructs a family of irreducible elements of $\BirA$; the proof shows that that 
family already contains $|\bk|$ nonequivalent irreducible elements of $\BirA$.
The vertical bars denote cardinality (recall that $\bk$ is algebraically closed, so $|\bk|$
is an infinite cardinal).

\begin{lemma}[{\cite[4.1]{CassouDaigBir}}] \label {.2938f9283w9fq0349hfio}
$ \big| \setspec{ [f] }{ \text{$f$ is an irreducible element of $\BirA$} } \big| = |\bk|$. 
\end{lemma}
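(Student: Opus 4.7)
The plan is to prove the equality by a two-sided bound. For the upper bound, every element of $\BirA$ is specified by an ordered pair of polynomials $(f_1,f_2) \in \bk[x,y] \times \bk[x,y]$, so
\[
|\BirA| \le |\bk[x,y] \times \bk[x,y]| = |\bk|,
\]
where the last equality uses that $\bk$ is algebraically closed (hence infinite). Since the set of equivalence classes of irreducible elements is a quotient of a subset of $\BirA$, its cardinality is at most $|\bk|$.

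For the lower bound, the natural route is to exhibit an explicit $|\bk|$-parameter family of pairwise nonequivalent irreducible elements. I would invoke the family constructed in Example~4.13 of \cite{Dai:bir}: it provides a collection $\{f_\lambda\}_{\lambda\in\bk}$ (or $\lambda$ ranging over a cofinite subset of $\bk$) of irreducible birational endomorphisms of $\aff^2$. The task then reduces to showing that the map $\lambda \mapsto [f_\lambda]$ is finite-to-one on a subset of $\bk$ of cardinality $|\bk|$, because $|\bk|$ divided by any finite (indeed any cardinal $< |\bk|$) fiber-size is still $|\bk|$.

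To extract an invariant of $[f]$ that separates (most) parameters, I would work with the minimal resolution of indeterminacies of $f_\lambda^{-1} : \aff^2 \dashrightarrow \aff^2$, viewed as a birational map $\proj^2 \dashrightarrow \proj^2$. The weighted dual graph of the exceptional locus, together with the position of the missing curves and the line at infinity, is preserved by the left-right action of $\AutA\times\AutA$ (up to the finite action of the graph's automorphism group) and hence depends only on $[f_\lambda]$. In the family of~4.13, the parameter $\lambda$ enters as a cross-ratio type modulus of marked points on one of the exceptional components; this modulus is an invariant of $[f_\lambda]$ and takes distinct values as $\lambda$ varies over a Zariski-open subset of $\bk$, with at most finitely many $\mu$ yielding the same modulus as any fixed $\lambda$. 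Since such a Zariski-open subset has cardinality $|\bk|$, this yields $|\bk|$ distinct equivalence classes.

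The main obstacle is the last step: producing a genuine modulus and verifying that it is an $\AutA\times\AutA$-invariant rather than merely a piece of the construction data. One has to chase through the blowup resolution of $f_\lambda$, identify a cross-ratio (or analogous algebraic invariant) of four or more distinguished points on a rational component of the exceptional divisor, and confirm that automorphisms of $\aff^2$ pre- and post-composed with $f_\lambda$ can only permute these marked points through a finite subgroup of $\mathrm{PGL}_2$. Once this invariance is in place the bound $\ge |\bk|$ follows immediately, and combined with the upper bound completes the proof.
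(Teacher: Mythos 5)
Your overall strategy coincides with the one the paper attributes to \cite[4.1]{CassouDaigBir}: the upper bound is the trivial count $|\BirA|\le|\bk[x,y]\times\bk[x,y]|=|\bk|$, and the lower bound is to be extracted from the family of irreducible elements constructed in Example~4.13 of \cite{Dai:bir}, by showing that this family already contains $|\bk|$ pairwise nonequivalent members. So you have identified the right family and the right reduction.

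The problem is that the lower bound is the entire content of the lemma, and your treatment of it stops exactly where the work begins. You write that the parameter ``enters as a cross-ratio type modulus of marked points on one of the exceptional components'' and that one ``has to chase through the blowup resolution \dots and confirm'' the invariance --- but none of this is carried out, and you yourself flag it as the main obstacle. As it stands, the assertion that the family of 4.13 admits a genuine $\AutA\times\AutA$-invariant separating $|\bk|$ of the parameters is a conjecture about a construction you have not examined, not a proof. Note also that the discrete invariants available without further work (the weighted dual graph of the resolution, the number $c(f)$ of contracting curves) can only distinguish countably many classes, so the continuous modulus is indispensable and cannot be left as a plausibility argument. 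To close the gap you would need to compute, for the explicit $f_\lambda$ of \cite[4.13]{Dai:bir}, a concrete invariant of the equivalence class --- for instance the projective configuration of the missing curves, or of the centers appearing in the minimal resolution --- and verify both that it is preserved under pre- and post-composition with automorphisms of $\aff^2$ and that it is finite-to-one in $\lambda$. Until that verification is done, only the (trivial) upper bound has been established.
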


In the next result, (i) $\Rightarrow$ (ii) is clear and the converse easily follows from
the fact that the monoid $\BirA$ has factorizations into irreducibles.

\begin{lemma}[{\cite[4.2]{CassouDaigBir}}] \label {92389h32fhfqfqasde}
For any subset $S$ of $\BirA$, the following are equivalent:
\begin{enumerate}

\item[(i)] $\AutA \cup S$ is a generating set for the monoid $\BirA$;
\item[(ii)] for each irreducible $f \in \BirA$, $[f] \cap S \neq \emptyset$.
\end{enumerate}
\end{lemma}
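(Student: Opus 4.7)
The plan is to prove the two implications separately, invoking the homomorphism $n : \BirA \to (\Nat,+)$ of \eqref{nm21m1j2uo87f7g4h2aaahklvxtw}, the characterization \eqref{x90zdx7ffyrgytd2} of $\AutA$ as the kernel of $n$, and the existence of factorizations into irreducibles \eqref{12xzx3xc5xv46h5bjyiutuql}.

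For (i)$\Rightarrow$(ii), I would fix an irreducible $f \in \BirA$ and use the generating hypothesis to write $f = g_1 \circ \cdots \circ g_n$ with each $g_i \in \AutA \cup S$. First, I would argue by induction on $n$ from the definition of irreducibility that in any such factorization exactly one factor $g_j$ is non-invertible while all the others lie in $\AutA$; indeed if $n \ge 2$ then $f = g_1 \circ (g_2 \circ \cdots \circ g_n)$, so one of $g_1$ and $g_2 \circ \cdots \circ g_n$ must be in $\AutA$, and iterating handles the latter case. The distinguished non-invertible factor $g_j$ cannot be in $\AutA$, so it lies in $S$. Setting $u = g_1 \circ \cdots \circ g_{j-1}$ and $v = g_{j+1} \circ \cdots \circ g_n$ (both in $\AutA$, with the convention that an empty composition is the identity), we have $f = u \circ g_j \circ v$, hence $f \sim g_j$ and $g_j \in [f] \cap S$.

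For (ii)$\Rightarrow$(i), let $f \in \BirA$ be arbitrary. If $f \in \AutA$ there is nothing to prove; otherwise \eqref{12xzx3xc5xv46h5bjyiutuql} gives a factorization $f = f_1 \circ \cdots \circ f_m$ into irreducibles. By (ii), for each $i$ there exists $s_i \in [f_i] \cap S$, so there are $u_i,v_i \in \AutA$ with $f_i = u_i \circ s_i \circ v_i$. Substituting gives $f = (u_1 \circ s_1 \circ v_1) \circ \cdots \circ (u_m \circ s_m \circ v_m)$, exhibiting $f$ as a word in $\AutA \cup S$, as required.

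The only mild subtlety is the inductive step in (i)$\Rightarrow$(ii) showing that a factorization of an irreducible element contains only one non-invertible factor; this is a direct unwinding of the definition of irreducibility and presents no real obstacle. Everything else is formal manipulation of the factorizations permitted by \eqref{nm21m1j2uo87f7g4h2aaahklvxtw}--\eqref{12xzx3xc5xv46h5bjyiutuql}.
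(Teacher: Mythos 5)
Your proposal is correct and follows exactly the route the paper indicates: it describes (i)$\Rightarrow$(ii) as clear (your unwinding of irreducibility, together with the fact that $\AutA = n^{-1}(0)$ is factorially closed, supplies the details) and derives (ii)$\Rightarrow$(i) from the existence of factorizations into irreducibles, just as you do. No gaps.
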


The next fact is an immediate consequence of \ref{.2938f9283w9fq0349hfio} and \ref{92389h32fhfqfqasde}:

\begin{corollary}[{\cite[4.3]{CassouDaigBir}}] \label {2938923d89h209jdhvas.}
Let $S$ be a subset of $\BirA$ such that $\AutA \cup S$ is a generating set for the monoid $\BirA$.
Then $|S| = |\bk|$.
\end{corollary}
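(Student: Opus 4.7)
The plan is to combine the two preceding lemmas together with the elementary cardinality bound $|\BirA| \le |\bk|$. The statement is flagged as "an immediate consequence," so the proof should be a short two-step sandwich.

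First I would apply Lemma~\ref{92389h32fhfqfqasde} to the hypothesis that $\AutA \cup S$ generates $\BirA$: this yields $[f] \cap S \neq \emptyset$ for every irreducible $f \in \BirA$. Choosing one element of $[f] \cap S$ for each equivalence class $[f]$ of irreducibles produces an injection from the set $\setspec{ [f] }{ \text{$f$ is irreducible} }$ into $S$. By Lemma~\ref{.2938f9283w9fq0349hfio} the domain of this injection has cardinality $|\bk|$, so $|S| \ge |\bk|$.

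For the reverse inequality I would use $|S| \le |\BirA|$ and bound $|\BirA| \le |\bk|$. The latter is standard: an element of $\BirA$ is determined by an ordered pair $(P,Q) \in \bk[x,y]^2$, and since $\bk$ is infinite (being algebraically closed) one has $|\bk[x,y]| = |\bk|$, whence $|\BirA| \le |\bk|^2 = |\bk|$. Combining the two bounds gives $|S| = |\bk|$.

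The main (very minor) obstacle is just recording the bound $|\BirA| \le |\bk|$ cleanly; everything else is a direct citation of the two preceding lemmas.
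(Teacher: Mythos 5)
Your proof is correct and follows exactly the route the paper intends: Lemma~\ref{92389h32fhfqfqasde} gives $[f]\cap S\neq\emptyset$ for every irreducible $f$, disjointness of equivalence classes turns a choice of representatives into an injection from the set of classes into $S$, and Lemma~\ref{.2938f9283w9fq0349hfio} then gives $|S|\ge|\bk|$, while the reverse bound $|S|\le|\BirA|\le|\bk|$ is the standard count of pairs of polynomials over an infinite field. Nothing is missing.
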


A question posed by Patrick Popescu-Pampu asks whether one can find a set $S \subset \BirA$ such that
$\AutA \cup S$ generates $\BirA$ and the elements of $S$ have bounded degree.
Result \ref{89j32467ewqfds}, below, gives a negative answer to 
that question.  First, we define what we mean by the degree of an element of $\BirA$.
See the general introduction for the notions of coordinate system of $\kk2$ or $\aff^2$,
and for the degree(s) of an element of $\kk2$.
Let $\Cgoth$ denote the set of coordinate systems of $\aff^2$.

\begin{definition}
Let $f \in \BirA$.
\begin{enumerata}

\item Let $\gamma=(X,Y) \in \Cgoth$. 
Using $\gamma$ to identify $\aff^2$ with $\bk^2$, we can consider that
$f : \aff^2 \to \aff^2$ is given by $f(x,y) = (u(x,y), v(x,y))$ for some unique polynomials $u,v \in \bk[X,Y]$.
We define $\deg_\gamma f = \max( \deg_\gamma u,  \deg_\gamma v )$. 

\item Define $\DEG f = \min\setspec{ \deg_\gamma f }{ \gamma \in \Cgoth }$ and
$$
\UDEG f = \min\setspec{ \DEG g }{ g \in [f] }
=  \min\setspec{ \deg_\gamma g }{ g \in [f] \text{ and } \gamma \in \Cgoth }.
$$

\end{enumerata}
\end{definition}

Note that $1 \le \UDEG f \le \DEG f \le \deg_\gamma f \in \Nat$, for every $f \in \BirA$ and $\gamma \in \Cgoth$.

\begin{corollary} \label {89j32467ewqfds}
Let $S$ be a subset of $\BirA$ such that $\AutA \cup S$ is a generating set for the monoid $\BirA$.
Then $\setspec{ \UDEG f }{ f \in S }$ is not bounded.
\end{corollary}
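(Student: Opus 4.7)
The plan is to argue by contradiction. Assume that there is a constant $M \in \Nat$ with $\UDEG(f) \le M$ for every $f \in S$. Since $\UDEG$ is defined as the minimum of $\deg_\gamma g$ taken over all $g \in [f]$ and all $\gamma \in \Cgoth$, it is constant on $\sim$-classes. Lemma \ref{92389h32fhfqfqasde} then forces every irreducible element of $\BirA$ to have $\UDEG \le M$: given an irreducible $f$, one picks $g \in S$ with $g \sim f$ and concludes $\UDEG(f) = \UDEG(g) \le M$.

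Next I would invoke the fact recalled in the excerpt from \cite[4.12]{Dai:bir}, that the additive invariant $n$ attains every positive integer value on irreducibles: for each $N \ge 1$ there is an irreducible $f_N \in \BirA$ with $n(f_N) = N$. Combining this with the previous paragraph, the contradiction would be reached if one established an inequality of the form
\[
n(f) \le \psi(\UDEG(f))
\]
for some function $\psi : \Nat \to \Nat$ independent of $f$; indeed, choosing $N > \psi(M)$ would produce an irreducible $f_N$ for which $\UDEG(f_N) > M$, violating the conclusion of the first paragraph.

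The main obstacle is thus establishing such a bound $n(f) \le \psi(\UDEG(f))$. My plan is to unwind the definition of $n$ from \cite[1.2(a)]{Dai:bir} (or \cite[2.3]{CassouDaigBir}), where $n(f)$ is read off from the combinatorics of a minimal resolution of the rational map $\proj^2 \to \proj^2$ attached to a suitable representative of $[f]$. Taking the representative with $\DEG \le \UDEG(f)$, the associated Cremona transformation has degree at most $\UDEG(f)$, and the number (counted with multiplicity) of its infinitely near base points is controlled by the Noether identities $\sum_i e_i = 3(d-1)$ and $\sum_i e_i^2 = d^2 - 1$ for a degree-$d$ Cremona transformation. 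These give a linear bound on the resolution data, hence on $n(f)$, in terms of $\UDEG(f)$. Because $n$ is a $\sim$-invariant (it is a homomorphism vanishing on $\AutA$), the bound so produced depends only on the class $[f]$, and the contradiction closes, completing the proof.

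The only nontrivial ingredient in the plan is the intersection-theoretic bound in the last paragraph; the remainder of the argument is a formal deduction from \ref{92389h32fhfqfqasde}, the equivalence-invariance of $\UDEG$, and the cited statement from \cite[4.12]{Dai:bir}. I expect the technical work to reside entirely in carefully matching the definition of $n$ with the base-point count of the resolution, so that the Noether identities apply as stated.
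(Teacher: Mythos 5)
Your proposal is correct in outline but takes a genuinely different route from the paper's. The paper bounds $\UDEG f$ from below by the number $c(f)$ of contracting curves, via $\UDEG f \ge (c(f)+2)/2$ (deduced from \cite[4.4]{CassouDaigBir} and the $\sim$-invariance of $c$), and then quotes \cite[4.13]{Dai:bir} to produce irreducible elements with arbitrarily many contracting curves. You instead work with the additive invariant $n$, using the fact---already quoted in the survey from \cite[4.12]{Dai:bir}---that $n$ takes every positive value on irreducibles, and you close the argument with a bound $n(f)\le\psi(\UDEG f)$ coming from Noether's equations. That bound does hold and your sketch of it is the right one: for any $\gamma\in\Cgoth$ the extension of $f$ to a Cremona transformation of $\proj^2$ has degree $d'\le\deg_\gamma f$ (homogenize and cancel the common power of the line at infinity), its inverse also has degree $d'$ and hence at most $3(d'-1)$ infinitely near base points since $\sum_i e_i=3(d'-1)$ with all $e_i\ge 1$, and the $n(f)$ centers blown up in a minimal factorization $\aff^2\hookrightarrow W\to\aff^2$ of $f$ are among the base points of the inverse lying over the target $\aff^2$; so $n(f)\le 3(\deg_\gamma f-1)$ for every $\gamma$, and by $\sim$-invariance of $n$ one gets $n(f)\le 3(\UDEG f-1)$. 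The remaining formal steps ($\sim$-invariance of $\UDEG$ and Lemma \ref{92389h32fhfqfqasde}) coincide with the paper's. What your route buys is independence from \cite[4.4]{CassouDaigBir} and \cite[4.13]{Dai:bir}, using only facts about $n$ that are stated verbatim in the survey, at the price of the Cremona-theoretic matching you flag at the end; the paper's route avoids that verification entirely by citing a ready-made lower bound for $\DEG$ in terms of contracting curves. Do carry out the matching of $n(f)$ with the base-point count explicitly if you write this up, since it is the only step that is not purely formal.
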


\begin{smallremark}
This is a slight improvement of Corollary~4.5 of \cite{CassouDaigBir},
which states that $\setspec{ \DEG f }{ f \in S }$ is not bounded.
The following proof is a small modification of that of \cite[4.5]{CassouDaigBir},
and inequality \eqref{78df2hwe65ghds} slightly improves  \cite[4.4]{CassouDaigBir}.
\end{smallremark}

\begin{proof}[Proof of \ref{89j32467ewqfds}]
Remark 4.4 of \cite{CassouDaigBir} shows that $\DEG g \ge (c(g)+2)/2$ for every $g \in \BirA$,
where $c(g)$ is the number of contracting curves of $g$.
Given $f \in \BirA$, we may choose $g \in [f]$ such that $\UDEG f = \DEG g$; since 
$c(g)=c(f)$, it follows that  $\UDEG f = \DEG g \ge (c(g)+2)/2 =  (c(f)+2)/2$. We showed:
\begin{equation} \label {78df2hwe65ghds}
\UDEG f \ge \frac{c(f)+2}2 \quad \text{for every $f \in \BirA$}.
\end{equation}

Now let $S$ be as in the statement, and let $n \in \Nat$.
By \cite[4.13]{Dai:bir}, there exists an irreducible element $g \in \BirA$ satisfying $c(g) \ge 2n$.
By \ref{92389h32fhfqfqasde}, there exists $f \in S$ satisfying $f \sim g$; then $c(f) = c(g) \ge 2n$,
so $\UDEG f > n$ by \eqref{78df2hwe65ghds}.
\end{proof}

\section{Some properties of $\Aeul$ in $\BirA$}
\label {SomepropertiesofAeulinBirA}

Let $\Cgoth$ denote the set of coordinate systems of $\aff^2$.

Let $\gamma \in \Cgoth$, 
use $\gamma$ to identify $\aff^2$
with $\bk^2$, and define an element $c_\gamma \in \BirA$ by $c_\gamma(x,y) = (x,xy)$.
Then the equivalence class $[c_\gamma]$ is independent of the choice of $\gamma$;
the elements of $[c_\gamma]$ are called {\it simple affine contractions} (SAC).

By \eqref{nm21m1j2uo87f7g4h2aaahklvxtw} and \eqref{x90zdx7ffyrgytd2},
all elements of $\setspec{ f \in \BirA }{ n(f)=1 }$ are irreducible.
Now \cite[4.10]{Dai:bir} implies:
\begin{equation} \label {9823gs6543hgr26tr4653r254}
\text{$\setspec{ f \in \BirA }{ n(f)=1 }$ is the set of simple affine contractions.}
\end{equation}
So SACs are irreducible and, in fact,
SACs are the simplest irreducible elements and the simplest non-invertible elements of $\BirA$.

Let $\Aeul$ be the submonoid of $\BirA$ generated by automorphisms and simple affine contractions.
Equivalently, 
given any $\gamma \in \Cgoth$, we may describe $\Aeul$ by:
$$
\text{$\Aeul =$ the submonoid of $\BirA$ generated by $\AutA \cup \{c_\gamma\}$.}
$$
One has $\Aeul \neq \BirA$ by \ref{2938923d89h209jdhvas.}, or because question~$(*)$
(in the introduction of Part~II) has a negative answer.

\begin{definition}
Let $\Meul_0$ be a submonoid of a monoid $\Meul$.
We say that $\Meul_0$ is {\it factorially closed\/} in $\Meul$ if the conditions $x,y \in \Meul$
and $xy \in \Meul_0$ imply that $x,y \in \Meul_0$.
\end{definition}

For instance, note the following trivial fact:

\begin{lemma} \label {89dnyt54bjhzshi374}
Let $\Meul$ be a monoid and $\delta : \Meul \to (\Nat,+)$ a homomorphism of monoids.
Then $\setspec{ x \in \Meul }{ \delta(x) = 0 }$ is factorially closed in $\Meul$.
\end{lemma}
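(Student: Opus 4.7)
The plan is to unwind the definition of factorially closed and use the one nontrivial arithmetic fact about $(\Nat,+)$: that $a+b=0$ with $a,b\in\Nat$ forces $a=b=0$. Since the lemma is flagged as trivial in the paper, I expect no obstacles at all; the only thing to be careful about is to quote the right property of $\Nat$ explicitly, so that it is clear where the hypothesis ``codomain $(\Nat,+)$'' is used (a homomorphism into an arbitrary commutative monoid would not suffice: e.g.\ into $(\Integ,+)$, the kernel is not factorially closed).

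Concretely, I would write $\Meul_0 = \setspec{x \in \Meul}{\delta(x)=0}$, note that $\Meul_0$ is a submonoid (it contains the identity because $\delta$ is a homomorphism, and is closed under the operation since $\delta$ is additive), and then verify factorial closure as follows. Suppose $x,y \in \Meul$ with $xy \in \Meul_0$. Then
\[
\delta(x) + \delta(y) \;=\; \delta(xy) \;=\; 0,
\]
where the first equality uses that $\delta$ is a homomorphism of monoids and the second uses $xy \in \Meul_0$. Since $\delta(x), \delta(y) \in \Nat$ and $a + b = 0$ in $\Nat$ forces $a=b=0$, we deduce $\delta(x) = 0 = \delta(y)$, i.e.\ $x,y \in \Meul_0$, as required.

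That is the entire argument; I would present it as a two-line proof. No auxiliary constructions are needed, and no earlier result from the paper is invoked.
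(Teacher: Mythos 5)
Your proof is correct and is exactly the standard argument; the paper itself omits the proof entirely, labelling the lemma a trivial fact, and the only point of substance — that $a+b=0$ in $\Nat$ forces $a=b=0$ — is precisely the one you isolate. Nothing further is needed.
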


By \eqref{nm21m1j2uo87f7g4h2aaahklvxtw}, \eqref{x90zdx7ffyrgytd2} and \ref{89dnyt54bjhzshi374},
we see that $\AutA$ is factorially closed in $\BirA$.
However, the following question turns out to be much more difficult:
\begin{equation} \tag{$**$}
\textit{Is $\Aeul$ factorially closed in $\BirA$?}
\end{equation}
There are several reasons why $(**)$ is a natural question, for instance its relation with
the open question of uniqueness of factorizations in $\BirA$:

\begin{lemma}
Suppose that $\BirA$ has the following property:
\begin{equation} \label {u65435467647yrhefdj}
\begin{minipage}[t]{.9\textwidth}
For any irreducible elements $f_1, \dots, f_r, g_1, \dots, g_s$ of $\BirA$ satisfying\\
$f_1 \circ \cdots \circ f_r = g_1 \circ \cdots \circ g_s$, we have $r=s$.
\end{minipage}
\end{equation}
Then $\Aeul$ is factorially closed in $\BirA$.
\end{lemma}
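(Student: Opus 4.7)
The plan is to use property \eqref{u65435467647yrhefdj} together with the characterizations \eqref{nm21m1j2uo87f7g4h2aaahklvxtw}, \eqref{x90zdx7ffyrgytd2} and \eqref{9823gs6543hgr26tr4653r254}, plus the existence of irreducible factorizations \eqref{12xzx3xc5xv46h5bjyiutuql}. Let $x,y \in \BirA$ with $xy \in \Aeul$; I want to show $x,y \in \Aeul$. If one of $x,y$ is an automorphism, say $x \in \AutA$, then $x \in \Aeul$ and $y = x^{-1}\circ (xy) \in \Aeul$ since $\Aeul$ is a submonoid containing $\AutA$; similarly if $y \in \AutA$. So I may assume that both $x$ and $y$ are non-invertible.

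First I would express $xy$ as a composition of simple affine contractions alone. Since $xy \in \Aeul$, it is a composition of elements of $\AutA \cup \{c_\gamma\}$; the automorphisms can be absorbed into adjacent factors via the observation that $u \circ c_\gamma$, $c_\gamma \circ v$, and $u \circ c_\gamma \circ v$ are SACs for any $u,v \in \AutA$. This produces a factorization
\begin{equation*}
xy = c_1 \circ c_2 \circ \cdots \circ c_m
\end{equation*}
in which each $c_i$ is a SAC. By \eqref{9823gs6543hgr26tr4653r254} each $c_i$ is irreducible with $n(c_i)=1$, so \eqref{nm21m1j2uo87f7g4h2aaahklvxtw} gives $m = n(xy)$, and the displayed equation is a factorization of $xy$ into irreducibles.

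Next I would produce a second irreducible factorization of $xy$ by invoking \eqref{12xzx3xc5xv46h5bjyiutuql}: write $x = f_1 \circ \cdots \circ f_r$ and $y = g_1 \circ \cdots \circ g_s$ with each $f_i, g_j$ irreducible (possible because $x,y$ are non-invertible). Concatenation gives
\begin{equation*}
xy = f_1 \circ \cdots \circ f_r \circ g_1 \circ \cdots \circ g_s,
\end{equation*}
a second factorization of $xy$ into irreducibles. Applying hypothesis \eqref{u65435467647yrhefdj} to the two factorizations yields $m = r+s$.

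To finish, I would exploit additivity of $n$ once more. Each $f_i$ and $g_j$ is non-invertible, so \eqref{x90zdx7ffyrgytd2} gives $n(f_i), n(g_j) \ge 1$, whence
\begin{equation*}
r+s \;\le\; \sum_{i=1}^{r} n(f_i) + \sum_{j=1}^{s} n(g_j) \;=\; n(xy) \;=\; m \;=\; r+s .
\end{equation*}
Equality forces $n(f_i)=1=n(g_j)$ for all $i,j$, and then \eqref{9823gs6543hgr26tr4653r254} tells us that every $f_i$ and every $g_j$ is a SAC, so $x = f_1 \circ \cdots \circ f_r \in \Aeul$ and $y = g_1 \circ \cdots \circ g_s \in \Aeul$. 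I do not anticipate any genuine obstacle; the only step requiring a little care is the bookkeeping in the first paragraph, verifying that any element of $\Aeul$ with $n$-value $m$ can be rewritten as a composition of exactly $m$ SACs, which follows cleanly from the absorption observation together with $n(c_\gamma)=1$ and $n|_{\AutA}=0$.
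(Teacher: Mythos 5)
Your argument is correct. It rests on exactly the same mechanism as the paper's proof --- namely that hypothesis \eqref{u65435467647yrhefdj} makes the number of irreducible factors a well-defined quantity, which can then be compared with the additive invariant $n$ via the inequality $n(f)\ge 1$ for irreducible $f$, with equality precisely for SACs --- but you package it differently. The paper constructs a monoid homomorphism $\ell$ (length of an irreducible factorization, well defined by the hypothesis), sets $\delta=n-\ell$, identifies $\Aeul$ as $\setspec{f}{\delta(f)=0}$, and then invokes the general fact (Lemma~\ref{89dnyt54bjhzshi374}) that the kernel of a homomorphism into $(\Nat,+)$ is factorially closed. You instead run a direct element-wise argument: given $x\circ y\in\Aeul$ with $x,y$ non-invertible, you produce two irreducible factorizations of $x\circ y$ (one into $m=n(x\circ y)$ SACs after absorbing automorphisms, one by concatenating irreducible factorizations of $x$ and $y$), apply the hypothesis once to get $m=r+s$, and then force $n(f_i)=n(g_j)=1$ by pigeonhole, so every factor is a SAC by \eqref{9823gs6543hgr26tr4653r254}. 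Your route is more elementary in that it avoids verifying that $\ell$ and $\delta$ are globally well-defined homomorphisms and uses the hypothesis only for the single element $x\circ y$; the paper's route is slightly more structural and yields the clean characterization $\Aeul=\setspec{f}{n(f)=\ell(f)}$ as a byproduct. All the individual steps you use (the absorption of automorphisms into SACs, $m=n(x\circ y)$, the existence of irreducible factorizations for non-invertible elements) are justified by the facts quoted, so there is no gap.
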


\begin{proof}
Suppose that \eqref{u65435467647yrhefdj} is true.
Then we may define a homomorphism of monoids, $\ell : \BirA \to (\Nat,+)$,
by stipulating that if $f_1, \dots, f_r$ are irreducible elements of $\BirA$ then
$\ell(f_1 \circ \cdots \circ f_r) = r$, and $\ell(f)=0$ for all $f \in \AutA$.
Define $\delta(f) = n(f)-\ell(f)$ for $f \in \BirA$;
then \eqref{x90zdx7ffyrgytd2} implies that 
$\delta(f) = n(f)-1 \ge 0$ for each irreducible $f \in \BirA$,
and that $\delta(f) = 0$ for each $f \in \AutA$; thus
$$
\text{$\delta : \BirA \to (\Nat,+)$ is a homomorphism of monoids.}
$$
We have $\setspec{ f \in \BirA }{ n(f) \le 1 } \subseteq \Aeul \subseteq \setspec{ f \in \BirA }{ \delta(f) = 0 }$
by \eqref{x90zdx7ffyrgytd2} and \eqref{9823gs6543hgr26tr4653r254}.
If $f \in \BirA$ satisfies $\delta(f) = 0$ then either $f \in \AutA$ or $f = f_1 \circ \cdots \circ f_r$ for some
irreducible elements $f_1, \dots, f_r$ of $\BirA$; in the first case it is clear that $f \in \Aeul$,
and in the second case we have (for each $i$) $0 = \delta(f_i) = n(f_i)-1$, so $f_i \in \Aeul$;
so $f\in \Aeul$ in both cases, showing that  $\Aeul = \setspec{ f \in \BirA }{ \delta(f) = 0 }$.
The desired conclusion follows from~\ref{89dnyt54bjhzshi374}.
\end{proof}

Because we don't know whether $\BirA$ has property \eqref{u65435467647yrhefdj},
it is interesting to see that $\Aeul$ is indeed factorially closed in $\BirA$:

\begin{theorem}[{\cite[4.8]{CassouDaigBir}}]  \label {9kjxt12544d2jhdbhdfa384732}
If $f,g \in \BirA$ satisfy $g\circ f \in \Aeul$, then $f,g \in \Aeul$.
\end{theorem}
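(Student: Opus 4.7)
My plan is to prove the theorem by induction on the integer $N = n(g\circ f)$, using crucially the characterization \eqref{9823gs6543hgr26tr4653r254} of SACs, together with the fact \eqref{nm21m1j2uo87f7g4h2aaahklvxtw} that $n : \BirA \to (\Nat,+)$ is a monoid homomorphism and \eqref{x90zdx7ffyrgytd2} that $\AutA = \{f : n(f)=0\}$. The base case $N = 0$ is immediate: $n(g\circ f) = 0$ forces $n(f)=n(g)=0$, so $f,g \in \AutA \subseteq \Aeul$. For the inductive step, I first dispose of the degenerate subcases: if $n(g) = 0$ then $g \in \AutA \subseteq \Aeul$ and $f = g^{-1}\circ (g\circ f)$ is a composition of $\Aeul$-elements, hence lies in $\Aeul$; similarly if $n(f) = 0$. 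So I may assume $n(f), n(g) \ge 1$.

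Since $g\circ f \in \Aeul$ and $n(g\circ f) = N$, and since automorphisms can be absorbed into adjacent SAC-factors, I may write
\[ g \circ f \;=\; c_1 \circ c_2 \circ \cdots \circ c_N \]
where each $c_i$ is a SAC. The crux is to peel off a SAC from the right of $f$: I aim to produce a factorization $f = \tilde f \circ c$ with $c$ a SAC and $\tilde f \in \BirA$ of $n$-value $n(f)-1$. Geometrically, let $L$ be the contracting curve of $c_N$, a line in some coordinate system of $\aff^2$. Since $c_N$ is the rightmost factor of $g\circ f$, the curve $L$ is contracted by $g \circ f$; and every contracting curve of $g\circ f$ is either a contracting curve of $f$ already, or the $f$-preimage of a missing curve of $f$ that is contracted by $g$. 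I will argue that the first alternative must hold for $L$: if $L$ were instead pulled back from something contracted by $g$, a careful bookkeeping of $\Miss$, $\Cont$ and $n$ under composition (in the spirit of \ref{xvscgdfhh82927475} and \ref{56fewf8r8t34kd223lwgfuio}) would conflict with $n(c_N)=1$ and $n(f)\ge 1$. Once $L$ is known to be contracted by $f$, a rigidity lemma for SACs---two birational morphisms that contract the same line to the same point differ on the right by an automorphism---produces a SAC $c$ with $f = \tilde f \circ c$, and $n(\tilde f) = n(f) - 1$ follows from additivity of $n$.

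With this factorization I obtain
\[ g \circ \tilde f \circ c \;=\; g\circ f \;=\; c_1 \circ \cdots \circ c_{N-1}\circ c_N, \]
and since both $c$ and $c_N$ are SACs contracting $L$ to the same point, the same rigidity gives $c = \alpha\circ c_N$ for some $\alpha \in \AutA$; hence $g\circ \tilde f = c_1\circ \cdots \circ c_{N-1}\circ \alpha \in \Aeul$ with $n(g\circ\tilde f) = N-1$. The inductive hypothesis applied to the pair $(\tilde f, g)$ yields $\tilde f, g \in \Aeul$, whence $f = \tilde f \circ c \in \Aeul$ as well, completing the induction.

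The main obstacle, by a wide margin, is the geometric peeling step: showing that among the contracting curves of $f$ one can find a line that serves as the contracting curve of a SAC right-factor of $f$. This is not formal---it requires fine control on how $\Miss(\Phi)$, $\Cont(\Phi)$ and the $n$-invariant interact under composition, and a rigidity statement for SACs which itself relies on the Danielewski--Gizatullin type classification of one-step extensions $A \preceq B$. The remainder of the argument---the inductive bookkeeping and the absorption of automorphisms---is routine once that step is granted.
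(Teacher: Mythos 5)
The survey does not reproduce the proof of this theorem (it is quoted from \cite[4.8]{CassouDaigBir}), so I can only assess your argument on its own terms; it has a genuine gap, and the gap sits exactly where the whole content of the theorem lies. Your inductive scaffolding (base case, absorption of automorphisms into adjacent SACs, cancellation inside the Cremona group, descent to $g\circ\tilde f$) is fine, but everything hinges on the ``peeling step'', and both assertions you lean on there are false as stated. First, it is not true that the contracting curve $L$ of the rightmost SAC $c_N$ in an arbitrary decomposition $g\circ f=c_1\circ\cdots\circ c_N$ must lie in $\Cont(f)$. Take $f(x,y)=(x,xy)$ and $g(u,v)=(u,(u-1)v)$, two SACs, so that $(g\circ f)(x,y)=(x,x(x-1)y)$. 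The same composite also decomposes as $g'\circ f'$ with $f'(x,y)=(x,(x-1)y)$ and $g'(u,v)=(u,uv)$; choosing that decomposition, the rightmost SAC contracts $L=V(x-1)$, yet $f$ maps $V(x-1)$ isomorphically onto the curve $V(u-1)$ (which $g$ then contracts), so $L\notin\Cont(f)$. Thus the dichotomy you invoke can resolve the ``wrong'' way, and your argument would first have to produce a decomposition of $g\circ f$ \emph{compatible} with $f$ --- which is essentially the theorem itself.

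Second, the ``rigidity lemma'' also fails. In ring terms, $f=\tilde f\circ c$ with $c(x,y)=(x,xy)$ means $\bk[f_1,f_2]\subseteq\bk[x,xy]$, a condition on \emph{all} monomials of $f_1,f_2$, whereas contracting $V(x)$ only constrains the monomials not involving $x$. Concretely, $f(x,y)=(xy,x^2y)$ is a birational endomorphism (indeed $f\in\Aeul$, being $c\circ\sigma\circ c$ with $\sigma$ the swap) which contracts the line $V(y)$ to the origin but does not factor through the SAC $(x,y)\mapsto(xy,y)$, since $x^2y\notin\bk[xy,y]$. (The statement ``two birational morphisms contracting the same line to the same point differ on the right by an automorphism'' already fails for $c$ versus $c\circ c$.) There is also a small slip in your bookkeeping: a contracting curve of $g\circ f$ not contracted by $f$ is mapped by $f$ onto a curve \emph{in the closure of the image} of $f$ that is contracted by $g$, not onto a missing curve of $f$. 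In sum, the unproved step is not a technical verification but the entire difficulty; the survey's own framing --- contrasting this theorem with the easy homomorphism argument that would be available if uniqueness of factorization lengths were known --- confirms that no such direct induction is at hand.
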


\medskip
We want to mention another result of  \cite{CassouDaigBir} related to $\Aeul$.
It is customary to define families of elements of $\BirA$ by requiring that their missing curves
(or sometimes their contracting curves) satisfy some condition or other.
For instance, the introduction of Section~3 of \cite{CassouDaigBir}
defines three subsets 
$S_{\text{\rm w}} \supset S_{\text{\rm a}} \supset S_{\text{\rm aa}}$ of $\BirA$ by that method.
Let us consider in particular $\Sw$, which is defined to be 
the set of $f \in \BirA$ satisfying:
\begin{quote}
there exists a coordinate system of $\aff^2$ with respect to which all missing curves of $f$ have degree $1$.
\end{quote}

Note that $\AutA \cup \setspec{ c_\gamma }{ \gamma \in \Cgoth } \subseteq \Sw$.

\begin{example}
Choose a coordinate system of $\aff^2$ and use it to identify $\aff^2$ with $\bk^2$.
Define $c,\theta,f : \aff^2 \to \aff^2$ by $c(x,y) = (x,xy)$, $\theta(x,y) = (x+y^2-1,y)$ and $f = c \circ \theta \circ c$.
As $c \in \setspec{ c_\gamma }{ \gamma \in \Cgoth }$ and $\theta \in \AutA$, we have
$c,\theta \in \Aeul$ and hence $f \in \Aeul$.
The singular curve $y^2= x^2+x^3$ is a missing curve of $f$, so $f \notin \Sw$ and hence $\Aeul \nsubseteq \Sw$.
As $c,\theta \in \Sw$,
this also shows that $\Sw$ is not closed under composition of morphisms.
\end{example}

\begin{smallremark}
Russell constructed an example 
(which appeared later in \cite[4.7]{Dai:bir}) of an
element $f$ of $\Bir( \aff^2 )$ with three missing curves $C_1, C_2, C_3$,
where (with respect to a suitable coordinate system of $\aff^2$) $C_1$ and $C_2$ 
are the lines $x+y=0$ and $x-y=0$, and $C_3$ is the parabola $y=x^2$.
Note that for each $i \in \{1,2,3\}$, there exists a coordinate system of $\aff^2$ with respect
to which $C_i$ has degree $1$. However, since $C_1 \cap C_3$ consists of two points,
no coordinate system $\gamma$ of $\aff^2$ has the property that
$\deg_\gamma C_1 = \deg_\gamma C_2 = \deg_\gamma C_3 = 1$. So $f \notin \Sw$.
This shows that, in order for $f$ to belong to $\Sw$, it is not enough that each individual missing curve
be isomorphic to a line; the correct condition is that the missing curves be ``simultaneously rectifiable''.
\end{smallremark}

As a consequence of Theorem 3.15 of \cite{CassouDaigBir},  one has:

\begin{corollary}
The set $\Sw$ is included in $\Aeul$.
\end{corollary}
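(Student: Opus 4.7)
The plan is to prove the corollary by induction on the value of the monoid homomorphism $n \colon \BirA \to (\Nat,+)$. The base case $n(f) = 0$ is immediate, since \eqref{x90zdx7ffyrgytd2} gives $f \in \AutA \subseteq \Aeul$.

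For the inductive step, fix $f \in \Sw$ with $n(f) \ge 1$. Because $n(f) \ne 0$, \eqref{x90zdx7ffyrgytd2} ensures that $f$ is not an automorphism, so $\Cont(f) \ne \emptyset$ and, by the equality $c(f)=q(f)$ stated in the introduction to Part~II, also $\Miss(f) \ne \emptyset$. I would next invoke the hypothesis $f \in \Sw$ to pick a coordinate system $\gamma = (X,Y) \in \Cgoth$ in which every missing curve of $f$ has degree $1$, and then single out one missing line $L$. Post-composing $f$ with a suitable affine automorphism (an operation that preserves both $\Sw$ and $\Aeul$), I may assume $L = V(X)$, so that $L$ coincides with the missing curve of the simple affine contraction $c_\gamma \in \Aeul$ given by $c_\gamma(x,y)=(x,xy)$.

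My next step would be to produce $g \in \BirA$ with $f = c_\gamma \circ g$ and $g \in \Sw$; this is the content of Theorem~3.15 of \cite{CassouDaigBir}, which I would invoke as a black box. Granting such a factorization, the homomorphism property \eqref{nm21m1j2uo87f7g4h2aaahklvxtw} together with $n(c_\gamma) = 1$ forces $n(g) = n(f) - 1$, so the induction hypothesis yields $g \in \Aeul$, and therefore $f = c_\gamma \circ g \in \Aeul$.

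The main obstacle is the factorization lemma itself, which splits into two issues. First, one must show that the rational map $c_\gamma^{-1} \circ f$ extends to a morphism on all of $\aff^2$; geometrically, this forces the contracting curves of $f$ lying over $L$ to collapse to a single point, which can presumably be arranged after an automorphism adjustment on the source. Second, and more delicate, one must verify that the remaining missing curves of the quotient $g$ can still be simultaneously rectified by a single coordinate system, so that $g \in \Sw$. This second point is the crux of the whole induction: one needs a careful analysis of how the degree-$1$ missing curves of $f$ are transformed under $c_\gamma^{-1}$, showing in particular that their images under $c_\gamma^{-1}$ remain lines in a common coordinate system. Without such a persistence of rectifiability, the induction would collapse; it is this stability property that Theorem~3.15 is designed to deliver.
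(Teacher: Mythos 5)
The paper's proof of this corollary is a one-line citation: Theorem~3.15 of \cite{CassouDaigBir} already exhibits $\Sw$ (together with $S_{\text{\rm a}}$ and $S_{\text{\rm aa}}$) as a subset of $\Aeul$, by describing exactly which compositions of automorphisms and simple affine contractions land in each of these sets, so the inclusion $\Sw\subseteq\Aeul$ is immediate from the cited statement. Your induction on $n(f)$ is therefore doing work in the wrong place: if you are allowed to quote Theorem~3.15, the corollary needs no induction; if you are not, then the step you defer to it --- ``every non-invertible $f\in\Sw$ factors as $c_\gamma\circ g$ with $g\in\Sw$'' --- is not what that theorem is described as asserting, and it is precisely the hard content you would have to prove. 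As written, your argument is circular at its only nontrivial step.

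There is also a concrete error in your sketch of that step. Writing $f=(u,v)$ in the coordinates $\gamma=(X,Y)$ with $L=V(X)$ a missing line, the rational map $c_\gamma^{-1}\circ f=(u,v/u)$ extends to a morphism only if $u\mid v$, which forces every irreducible component of $V(u)$ to be contracted by $f$ to the \emph{origin}; equivalently, one needs $L\cap f(\aff^2)\subseteq\{(0,0)\}$, since $\operatorname{im}(c_\gamma)$ meets $V(X)$ only at the origin. A missing curve is only required to meet $f(\aff^2)$ in a \emph{finite} set, possibly of cardinality $\ge 2$, and no automorphism of the source changes $f(\aff^2)$ at all (a source automorphism permutes contracting curves but does not move their images). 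So ``arranged after an automorphism adjustment on the source'' cannot repair this; at best a target translation handles the case $|L\cap f(\aff^2)|\le 1$, and you would still have to prove that some missing line with this property exists, and then that the quotient $g$ again lies in $\Sw$. Both points require the detailed structure theory of \cite{CassouDaigBir}, which is why the paper simply cites Theorem~3.15 rather than arguing as you do.
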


Note that Theorem 3.15 of \cite{CassouDaigBir} gives a complete description of the three subsets
$S_{\text{\rm w}} \supset S_{\text{\rm a}} \supset S_{\text{\rm aa}}$ of $\Aeul$, and that it does so by describing
which compositions of automorphisms and simple affine contractions give elements of each of these sets.

\end{document}